 \newtheorem{theorem}{Theorem}
 \newtheorem{lemma}[theorem]{Lemma}
 \newtheorem{proposition}[theorem]{Proposition}
 \newtheorem{corollary}[theorem]{Corollary}
 \newtheorem{remark}[theorem]{Remark}
 \newtheorem{example}[theorem]{Example}
 \newtheorem{definition}[theorem]{Definition}
 \newtheorem{conjecture}[theorem]{Conjecture}
 \newtheorem{question}[theorem]{Question}
\newcommand{\bpr}{\begin{proof}}
\newcommand{\epr}{\end{proof}}
\newcommand{\beq}{\begin{equation}}
\newcommand{\eeq}{\end{equation}}
\newcommand{\bThm}{\begin{theorem}}
\newcommand{\eThm}{\end{theorem}}
\newcommand{\blem}{\begin{lemma}}
\newcommand{\elem}{\end{lemma}}
\newcommand{\bpro}{\begin{proposition}}
\newcommand{\epro}{\end{proposition}}
\newcommand{\bcor}{\begin{corollary}}
\newcommand{\ecor}{\end{corollary}}
\newcommand{\brem}{\begin{remark}}
\newcommand{\erem}{\end{remark}}
\newcommand{\bexa}{\begin{example}}
\newcommand{\eexa}{\end{example}}
\newcommand{\bdf}{\begin{definition}}
\newcommand{\edf}{\end{definition}}
\newcommand{\bcon}{\begin{conjecture}}
\newcommand{\econ}{\end{conjecture}}
\newcommand{\bque}{\begin{question}}
\newcommand{\eque}{\end{question}}
 \newcommand{\Z}{{\mathbb Z}}
 \newcommand{\R}{{\mathbb R}}
\newcommand{\p}{\partial}
\newcommand{\comment}[1]{}
\title{Virtual, Welded, and Ribbon Links in Arbitrary Dimensions}
\author{Blake K. Winter}
\address{}
\date{}
\begin{document}
\thispagestyle{empty}

\begin{abstract}
We define a generalization of virtual links to arbitrary dimensions by extending the geometric definition due to Carter et al. We show that many homotopy type invariants for classical links extend to invariants of virtual links. We also define generalizations of virtual link diagrams and Gauss codes to represent virtual links, and use such diagrams to construct a combinatorial biquandle invariant for virtual $2$-links. In the case of $2$-links, we also explore generalizations of Fox-Milnor movies to the virtual case. In addition, we discuss definitions extending the notion of welded links to higher dimensions. For ribbon knots in dimension $4$ or greater, we show that the knot quandle is a complete classifying invariant up to taking connected sums with a trivially knotted $S^1\times S^{n-1}$, and that all the isotopies involved may be taken to be generated by stable equivalences of ribbon knots.
\end{abstract}

\maketitle

%
\section{Introduction}
%

Let us begin by defining the following terms, which will be used throughout. All manifolds are taken to be smooth, unless otherwise specified. An immersed embedding, $\imath: L\hookrightarrow M$, of an $n$-manifold $L$ into an $(n+2)$-manifold $M$ will be called a \emph{$n$-link in M}. We will refer to this as simply a \emph{link} when the dimension and ambient space are implied by context. When $M=S^{n+2}$, it may also be termed a \emph{classical link}. A link for which $L$ has only one connected component will be called a \emph{$n$-knot (in $M$)}. Note that in some sources, the term $n$-knot is applied only to embeddings of the form $S^{n}\rightarrow M^{n+2}$; we use the term in the more general sense. Two links are considered equivalent if they are related by a smooth isotopy of their images. This is equivalent to requiring that their images be related by an ambient isotopy of $M$, \cite{Roseman2}. Note that this definition of links is also called the \emph{smooth category} of links. If the requirement of immersion is replaced by requiring the embedding to be piecewise-linear and locally flat, the resulting theory is called the \emph{piecewise-linear} or \emph{PL category}; if we instead require the embeddings to be merely continuous and locally flat, we obtain the \emph{topological category} or \emph{TOP}. We will always work with the smooth category unless otherwise specified.

The concept of virtual links was introduced by Kauffman, \cite{Kauff}, as a generalization of classical links. These virtual links were given geometric interpretations by Kamada, Kamada, and Carter et al., \cite{KamaKama, StableEq} and a somewhat different geometric interpretation by Kuperberg, \cite{Kuper}. By using the geometric interpretation given by Kamada et al. as our starting point, we will give a generalization of the definition which applies to codimension-$2$ embeddings in any dimension. Using Roseman's work on projections for links, \cite{Roseman2}, we define higher-dimensional generalizations of virtual link diagrams and Gauss codes. We will also discuss some methods for generalizing welded links to higher dimensions, using the work of Satoh, \cite{SS}, as our starting point.

\section{Link Diagrams}



Every classical $1$-link can be expressed by a \emph{link diagram}, which is a combinatorial object consisting of a planar graph on a surface whose vertices are $4$-valent. At each vertex, two of the edges of the graph are marked as \emph{the overcrossing arc} and the other two are marked as the \emph{undercrossing}. These markings give information on how to resolve them in three dimensions.

Let  $\imath: L\hookrightarrow F\times [0,1]$ be a $1$-link, where $F$ is a surface (possibly with boundary). Then there are canonical projections $\pi:F\times [0,1]\rightarrow F$ and $\pi_I:F\times [0,1]\rightarrow [0,1]$. It is possible to isotope $\imath: L\hookrightarrow F\times I$ such that $\pi \imath$ is an immersion and an embedding except for a finite number of double points called \emph{crossings}. 
In order to recover $L$, at each double point, we mark which strand projects under $\pi_I$ to the smaller coordinate in $[0,1]$ at that double point; this is the undercrossing. These marks allow us to recover $L$ up to ambient isotopy. Observe that $\pi(L)$ is a $4$-valent graph. The interiors of the edges of this graph will be called \emph{semi-arcs} of the link diagram. Let $D_{-}$ be the set $\{x\in L | \pi^{-1}(\pi(x))=\{x, y\}, x \neq y, \pi_I(x)<\pi_I(y)\}$. For any connected component $A$ of $L-D_-$, $\pi(A)$ is called an \emph{arc} of the link diagram.

For diagrams of $1$-links, there exists a set of three moves, the \emph{Reidemeister moves}, which satisfy the following condition: if $L$ and $L'$ are ambient isotopic links in $F\times [0,1]$, then the diagrams of $L$ and $L'$ on $F$ differ by a finite sequence of ambient isotopies of $F$ and Reidemeister moves. Thus, questions about $1$-links may be translated into questions about equivalence classes of link diagrams, with the equivalence relation being generated by the Reidemeister moves.

\begin{figure}
		\centering
			\includegraphics[scale=0.3]{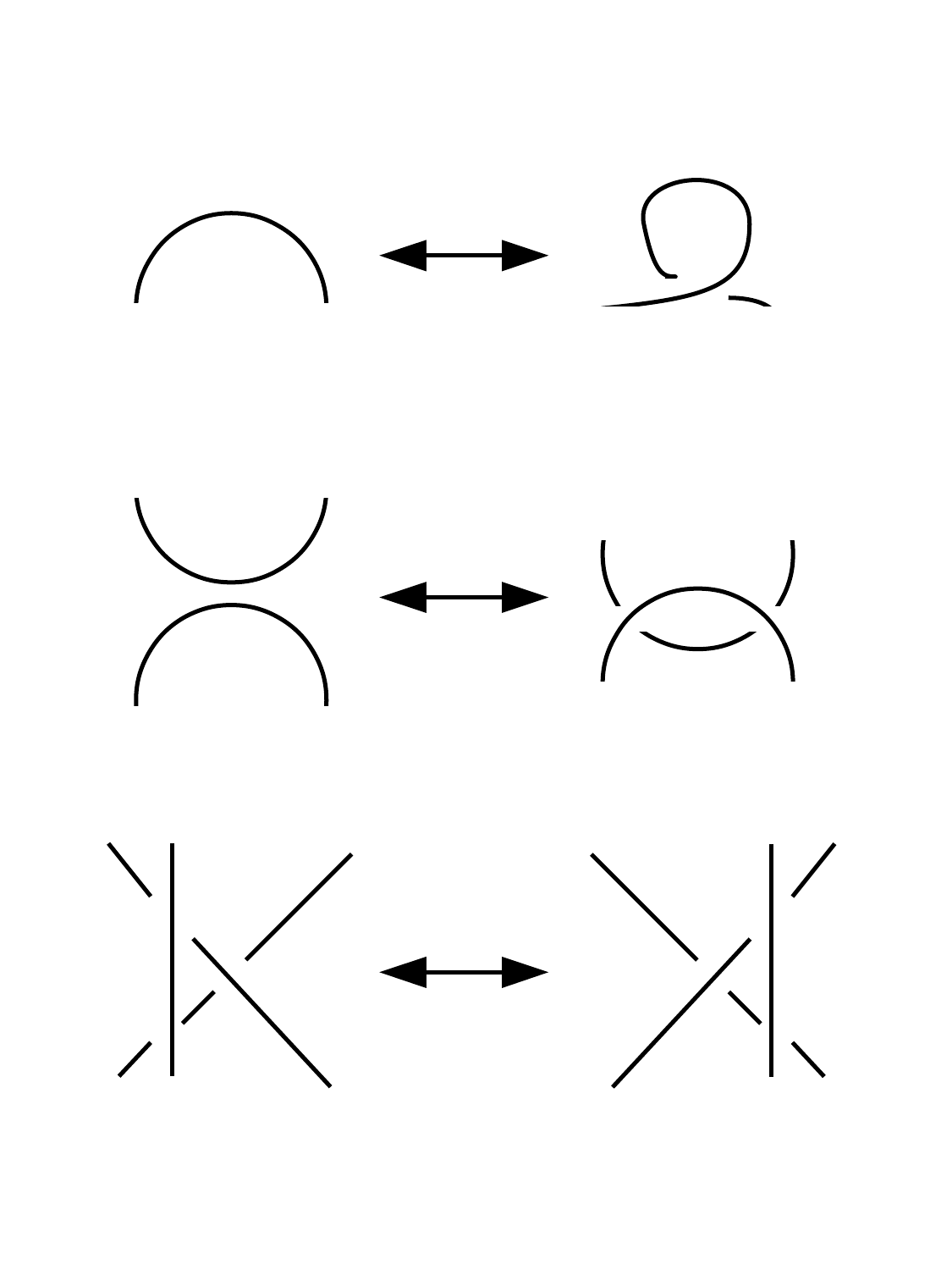}
		\caption{The three Reidemeister moves for diagrams of $1$-links.}
		\label{Reidemeister}
\end{figure}


Roseman, \cite{Roseman, Roseman1, Roseman2}, has generalized the diagram presentation of $1$-links to obtain an analogous presentation of $n$-links. In brief, it is possible to represent an arbitrary $n$-link $L$ in the manifold $F^{n+1}\times [0,1]$ by looking at the canonical projection of $L$ to $F$. Furthermore, it is possible to modify $L$ by an arbitrarily small isotopy such that the projection will be an immersion on an open dense subspace of $L$. By marking the double points of the projection with over and under information, Roseman obtains a generalization of link diagrams which is applicable to any dimension. Roseman also showed that any two link diagrams of ambient isotopic links will be related by a finite sequence of ambient isotopies of $F$ and certain local changes which leave the diagram unchanged outside a disk $D^{n+1}\subset F$. We will first give the full technical definitions necessary. Then we will examine some examples for $2$-links.

Let $L$ be an $n$-link in the manifold $F^{n+1}\times [0,1]$. If we wish to deal with links in $S^{n+2}$, we may form a diagram by isotoping the link to lie in some $\R^{n+1}\times [0,1]\subset S^{n+2}$; two links are isotopic in this $S^{n+1}\times [0,1]$ subspace iff they are isotopic in $S^{n+2}$.

The \emph{crossing set} of a link $L\subset F^{n+1}\times [0,1]$, denoted by $D^{*}$, is the closure of the set $\left\{x\in \pi(L), \left|\pi^{-1}(x)\right|\geq 2\right\}$. The \emph{double point set} $D$ is defined as $\pi^{-1}(D^{*})$. The \emph{branch set} $B$ is the subset of $L$ on which $\pi$ fails to be an immersion. The \emph{pure double point set} $D_{0}$ is defined to be the subset of $D$ consisting of those points $x\in L$ such that $\pi^{-1}(\pi(x))$ consists of exactly two points. We also define the \emph{(pure) overcrossing set} $D_{+}$ to be the set containing all $x\in D_{0}$ such that of the two $x, y\in\pi^{-1}(\pi(x))$, $\pi_I(y)<\pi_I(x)$, and the \emph{(pure) undercrossing set} $D_{-}=D_{0}-D_{+}$.

\begin{definition}\emph{(}\cite{Roseman1}\emph{)}
An immersion $\phi:L\rightarrow F$ is said to have \emph{normal crossings} if at any point of self-intersection, the pushforwards of the tangent spaces, $\phi_*:TL\rightarrow TF$, meet one another in general position.
\end{definition}

\begin{definition}

$L$ is in \emph{general position} (with respect to $\pi$) iff the following conditions hold.

\begin{enumerate}
	\item $B$ is a closed $(n-2)$ dimensional submanifold of $L$.

\item $D$ is a union of immersed and closed $(n-1)$-dimensional submanifolds of $L$ with normal crossings. The set of points where normal crossings occur will be labeled $N$ and called the \emph{self-crossing set of $D$}.

\item $B$ is a submanifold of $D$, and every $b\in B$ has an open disk neighborhood $U\subseteq D$, such that $U-B$ has two components $U_{0}$, $U_{1}$, which $\pi$ embeds into $F$ such that $\pi(U_{0})=\pi(U_{1})$.

\item $B$ meets $N$ transversely.

\item The restriction of $\pi$ to $B$ is an immersion of $B$ with normal crossings.

\item The crossing set of $\pi(B)$ is transverse to the crossing set of $\pi(L)$.

\end{enumerate}

\end{definition}

Note that this definition is taken from \cite{PR}, rather than Roseman's original version; the two definitions are equivalent.

This somewhat technical definition is important because of Theorem \ref{Rthm}, which says that we can always put an $n$-link into general position. Furthermore, Roseman has given a finite collection of moves, analogous to the Reidemeister moves, which relate any two diagrams of isotopic $n$-links, \cite{Roseman2}.

\begin{theorem}\emph{(}\cite{Roseman2}, \cite[Theorem 6.2, Lemma 7.1]{PR}\emph{)}\\
Every $n$-link in $F\times [0,1]$ can be isotoped into general position with respect to the projection $\pi :F\times [0,1]\rightarrow F$. \label{Rthm}
\end{theorem}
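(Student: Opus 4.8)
The plan is to observe first that all six conditions in the definition of general position depend only on the projected map $g := \pi\imath\colon L\to F$, a smooth map from the (closed) $n$-manifold $L$ to the $(n+1)$-manifold $F$, and not on the height $\pi_I\imath$. Indeed, $B$ is the set where $dg$ fails to be injective; $D^{*}$, and hence $D=\pi^{-1}(D^{*})$ and its self-crossing set $N$, are determined by the multiple points of $g$; and conditions (3)--(6) are statements about $B$, $D$, $N$, $\pi(B)=g(B)$ and $\pi(L)=g(L)$. Thus the theorem splits into two tasks: (i) showing that a generic smooth map $L^{n}\to F^{n+1}$ satisfies (1)--(6); and (ii) realizing a generic perturbation of $g$ by an ambient isotopy of the link.

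I would dispatch (ii) by a height-preserving perturbation. Writing $\imath=(g,\pi_I\imath)$, for any $g'$ sufficiently $C^{\infty}$-close to $g$ the map $\imath'=(g',\pi_I\imath)$ is $C^{\infty}$-close to $\imath$; since $L$ is compact, embeddings form an open subset of $C^{\infty}(L,F\times[0,1])$, so $\imath'$ is again an embedding, and the path obtained by interpolating $g'$ toward $g$ (geodesically in a fixed Riemannian metric on $F$, or inside a tubular neighborhood of $g(L)$) runs through embeddings, giving an isotopy which extends to an ambient isotopy by the isotopy extension theorem. Crucially, as $g'$ ranges over a neighborhood of $g$, the projection $\pi\imath'=g'$ ranges over a full neighborhood of $g$ in $C^{\infty}(L,F)$; hence any property of the projection that holds on a dense (residual) set of maps near $g$ can be arranged by an isotopy of the link, keeping the height fixed.

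For (i) I would invoke the standard generic-singularity machinery. Thom's jet transversality theorem, applied to the $1$-jet extension of $g$ and the Thom--Boardman strata, makes the fold locus $\Sigma^{1,0}(g)$ a closed $(n-2)$-dimensional submanifold, with the higher-corank strata $\Sigma^{k}$ of codimension $k(k+1)\ge 6$ (empty in low dimensions, and otherwise incorporated into the stratification); this yields condition (1) and, via the local cross-cap/fold normal form, the two-sheeted structure of $D$ along $B$ asserted in condition (3). For conditions (2), (4), (5), and (6) I would use Mather's multijet transversality theorem: assembling the finite list of multijet submanifolds cut out by coincidence of images together with the corank conditions, transversality to all of them on a residual set of maps $g$ forces $D$ to be an immersed closed $(n-1)$-submanifold with normal crossings exactly along the triple-point set $N$ (condition 2), $B$ to meet $N$ transversely (condition 4), $\pi|_{B}$ to be a normal-crossing immersion (condition 5), and the crossing sets of $g(B)$ and $g(L)$ to be mutually transverse (condition 6). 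Compactness of $L$ bounds the multiplicities and makes the resulting strata closed, which is exactly what forces $B$ and $D$ to be closed.

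The main obstacle is the bookkeeping in (i): one must write down explicitly the correct finite collection of (multi)jet strata whose simultaneous transversality yields precisely the six conditions, and verify that each is a submanifold of the expected codimension---most delicately for the mixed conditions (4)--(6), which couple the singular set $B$ with the multiple-point sets of $g$, and in higher dimensions for the treatment of the corank $\ge 2$ locus. This is exactly the stratification carried out in the cited work of Roseman and of Prasolov--Rozansky, and I would follow their construction; by contrast, the passage from map-genericity to an isotopy of the link is automatic from the height-preserving perturbation of the second paragraph.
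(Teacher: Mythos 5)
The paper does not actually prove Theorem \ref{Rthm}: it is quoted from Roseman \cite{Roseman2} (for $F=\R^{n+1}$) and from \cite[Lemma 7.1]{PR} (for general $F$), so your proposal can only be measured against those sources. Your overall architecture --- reduce everything to genericity of the projected map $g=\pi\imath$, realize a generic perturbation of $g$ by a height-preserving perturbation of the embedding, and derive conditions (1)--(6) from jet and multijet transversality --- is the standard one and is essentially what the cited references carry out. The reduction in your second paragraph is clean: embeddings of a compact manifold are $C^{1}$-open, so $\imath'=(g',\pi_I\imath)$ is again an embedding for $g'$ near $g$, and interpolation plus the isotopy extension theorem gives the required ambient isotopy.

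There is, however, one genuine soft spot, in your treatment of condition (1). You assert that the corank-$\geq 2$ Thom--Boardman strata are ``empty in low dimensions, and otherwise incorporated into the stratification.'' For an arbitrary generic map $L^{n}\to F^{n+1}$ the stratum $\Sigma^{2}$ has codimension $6$, so once $n\geq 6$ it can be nonempty and cannot be removed by perturbation; in that case $B=\overline{\Sigma^{1}}$ is a stratified set that is singular along $\Sigma^{2}$, and condition (1) --- that $B$ be a closed $(n-2)$-dimensional \emph{submanifold} --- fails. Incorporating these strata into the stratification therefore does not deliver the statement as written. The repair is to note that the maps in play are never arbitrary: $g'=\pi\imath'$ for an immersion $\imath'$, and since $\ker d\pi$ is one-dimensional and $d\imath'_{x}$ is injective, $\ker dg'_{x}=(d\imath'_{x})^{-1}(\ker d\pi)$ has dimension at most one at every point. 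Hence $\Sigma^{r}(g')=\emptyset$ for all $r\geq 2$, $B=\Sigma^{1}(g')$, and ordinary transversality (of the Gauss map of $\imath'$ to the codimension-$2$ locus of $n$-planes containing the fiber direction) yields condition (1) in every dimension. This corank restriction is precisely what makes general position work for all $n$, and your height-preserving perturbation automatically preserves it, since any $g'$ for which $(g',\pi_I\imath)$ is an embedding has corank at most one. With that observation inserted, your multijet bookkeeping for conditions (2)--(6) is the right program and matches the construction in the cited references.
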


Note that in \cite{Roseman2}, Theorem \ref{Rthm} is proved in the case of links in $\R^{n+1}\times [0,1]$ only. However, by Lemma 7.1 in \cite{PR}, it is true for links in $F\times [0,1]$ as well.

\begin{definition}
A link diagram $d$ for an $n$-link $L\subset F\times [0,1]$, in general position with respect to $\pi$, is the projection $\pi(L)$, such that at each point of crossing set whose preimage consists of exactly two points, there are markings to determine which point in the preimage has the larger coordinate in $[0,1]$.
\end{definition}

Note that by the above theorem every $n$-link in $F\times [0,1]$ can be represented by a link diagram $D$ on $F$. It is easy to see that a link represented by $d$ is unique. 

Note that in the case $n=1$, this definition reduces to the usual definition of a link diagram. A $2$-link diagram will be a surface in a $3$-manifold $M$ with double point curves, which can meet in isolated triple points and terminate in isolated branch points. Over and under information can be given, as in the $n=1$ case, by breaking one of the surfaces in the neighborhood of a double point curve. A $2$-link diagram is for this reason also called a \emph{broken surface diagram}.

Roseman also shows, in \cite[Prop. 2.9]{Roseman2}, that, for any dimension $n$, two link diagrams for links which are ambient isotopic in $F\times [0,1]$ differ by a finite sequence of ambient isotopies of $\pi(L)\subset F$ and certain \emph{Roseman moves}. Each Roseman move changes the link diagram only within some open disk in $F\times [0,1].$ The set of Roseman moves for fixed $n$ is finite, although the number of moves increases with $n$. In the case $n=1$, the Roseman moves reduce to the standard Reidemeister moves, shown in Fig. \ref{Reidemeister}. In the case $n=2$, the Roseman moves can be given simple graphical representations in terms of broken surface diagrams. For higher $n$, Roseman gives a local model for each Roseman move, but does not give an explicit enumeration of moves. We will return to these representations in our discussion of diagrams for virtual links.

\begin{definition}
In a link diagram $d$, given by $\pi(L)$, for $L\subset F\times [0,1]$, the connected components of $\pi(L)-\pi(D)$ will be termed the sheets of the diagram.
\end{definition}

For $1$-links, a sheet is the same as a semi-arc. In addition, sometimes the term \emph{sheet} may be used to refer to the preimages in $L$ of the sheets in the diagram. When there is ambiguity, we will specify whether we are referring to a sheet in the diagram on $F$ or a sheet in $L$.

\begin{definition}
Let $L\subset F\times [0,1]$ be in general position with respect to $\pi:F\times [0,1]\rightarrow F$. The connected components of $(L-D)\cup D_+$ will be termed the \emph{faces} of the diagram. The images of these components under $\pi$ may also be called faces depending upon context.
\end{definition}

The faces of a $1$-link diagram are its arcs.

\section{Review of Virtual and Welded Links for $n=1$}

Virtual links were introduced by Kauffman, \cite{Kauff}, as a generalization of classical $1$-links. A link diagram gives rise to a combinatorial \emph{Gauss code}.

Since a link diagram is inherently a graph on a surface $F$, each link diagram gives rise to a \emph{Gauss code}. There are various ways of defining Gauss codes for links. All of them specify the set of arcs in the diagram, and, for each crossing, the two arcs which terminate at that crossing are specified, together with the order in which these crossings occur on their overcrossing arcs. Concretely, a Gauss diagram for $L=S^1\cup ...\cup S^1\xrightarrow{\imath} F\times I \xrightarrow{\pi} F$ is $L$ together with a collection of triples $(x,y,z)$, one for each vertex $v$ of the diagram $\pi\imath(L),$ where $x,y$ are preimages of $v$ under $\pi$ and $z=\pm 1$, depending on whether $y$ is over $x$ (i.e. $\pi_I(y)>\pi_I(x)$) or not.

However, not every Gauss code corresponds to a link diagram on $\R^2$. A Gauss code which corresponds to a link diagram on $S^2$ is termed \emph{realizable}. One method for defining virtual links is to consider arbitrary Gauss codes, modulo local changes corresponding to Reidemeister moves. These moves for Gauss codes are illustrated in Fig. \ref{gaussreid}. 

\begin{figure}
		\centering
			\includegraphics[scale=0.3]{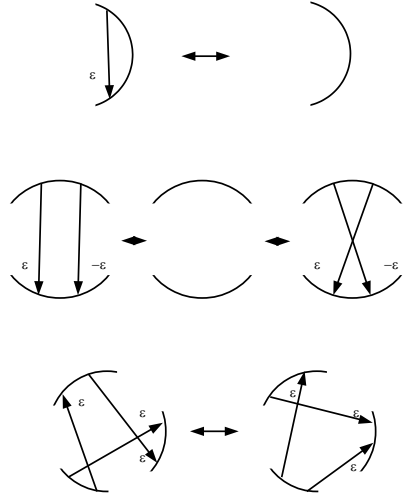}
		\caption{The effects of the three Reidemeister moves on a Gauss code. Here $\epsilon$ denotes the sign of the crossing and $-\epsilon$ denotes a crossing of the opposite sign.}
		\label{gaussreid}
\end{figure}

Such Gauss codes may also be represented using \emph{virtual link diagrams}. A virtual link diagram is a link diagram where we allow arcs to cross one another in \emph{virtual} crossings, indicated in the diagram by a crossing marked with a little
circle. All the Reidemeister moves are permitted on such diagrams, and any arc segment with only virtual crossings may be replaced by a different arc segment with only virtual crossings, provided the endpoints remain the same. This is equivalent to allowing the additional moves shown in Fig. \ref{vReidemeister}.

\begin{figure}
		\centering
			\includegraphics[scale=0.5]{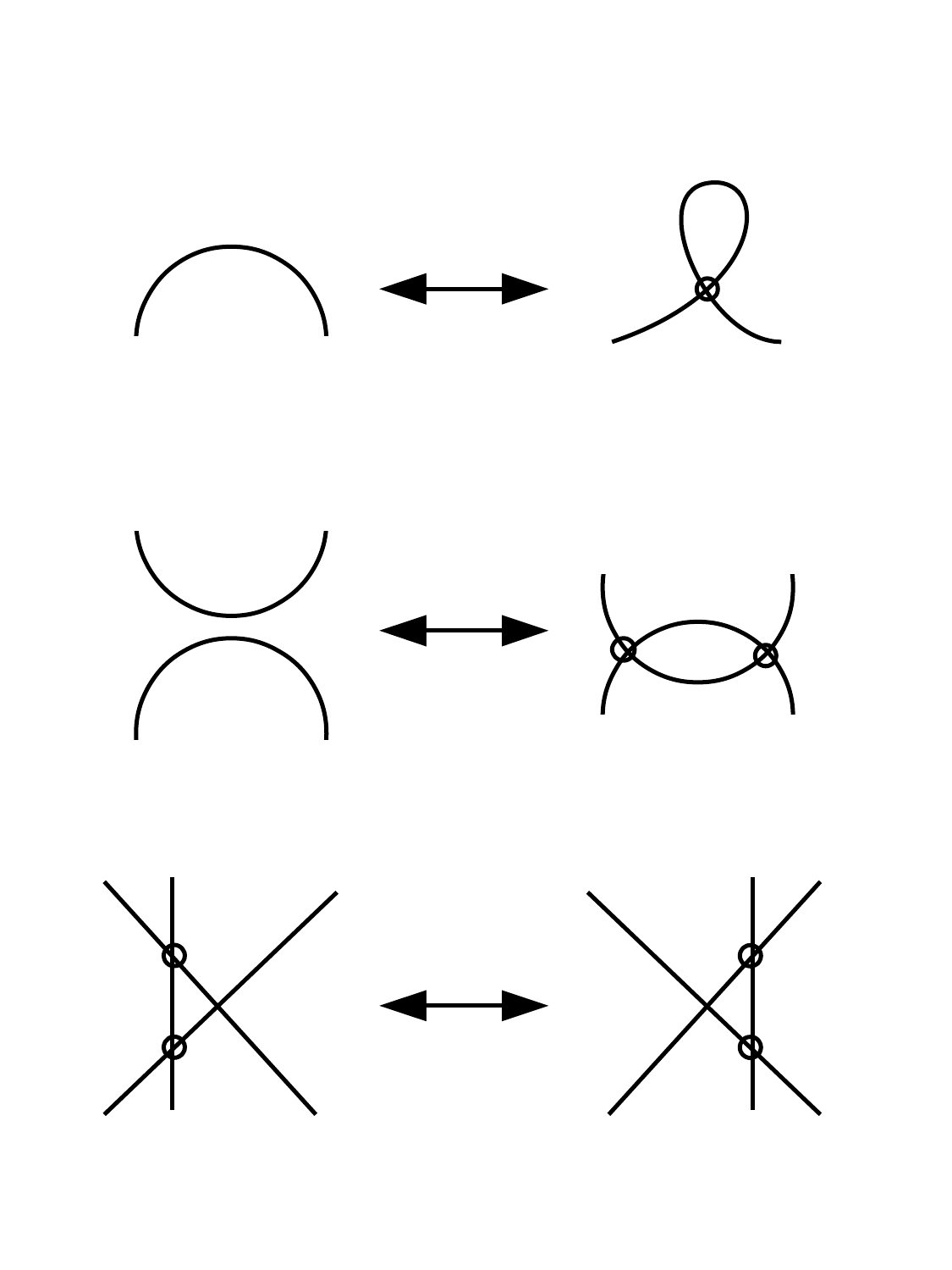}
		\caption{The additional virtual Reidemeister moves. Virtual crossings are indicated by crossings with small circles placed on them. The unmarked crossing in the third move can be either a virtual crossing or an arbitrary classical crossing.}
		\label{vReidemeister}
\end{figure}

Virtual links share many similarities with classical ones. For example, one can define virtual link groups and peripheral subgroups of virtual knot groups, cf. Sec. \ref{s-geom-def-virt}.
Furthermore, polynomial invariants of links, like the Alexander, Jones, and Homfly-pt polynomials, extend to virtual links.




Since the group and peripheral structure are invariants of virtual knots, by a result of Gorden and Luecke, \cite{GL} along with a theorem of Waldhausen, \cite{Wald}, two classical knots which are virtually equivalent must be classically equivalent as well. It can be shown, however, that not every virtual link is virtually equivalent to a classical link. We will discuss such an example in Section \ref{QaB}.

Welded links were first studied in the context of braid groups and automorphisms of quandles by Fenn, Rimanyi, and Rourke,	\cite{FRR}. For the $n=1$ case, welded links are virtual links considered modulo the so-called ``forbidden move'' (as it is a forbidden move for virtual links), shown in Fig. \ref{forbidden}. The link group, and the peripheral structure, can be extended to invariants of welded links. In particular the link group is computed from a welded link diagram using the same algorithm as for virtual or classical link diagrams. Consequently, just as in the virtual case, any two classical knots which are welded equivalent must be classically equivalent as well. There are examples of welded knots which are not welded equivalent to classical links, however. For example, there are welded knots whose fundamental groups are not infinite cyclic, but whose longitudes are trivial. Examples of such welded knots are easily constructed using a consequence of the work of Satoh, \cite{SS}. Satoh has shown that every ribbon embedding of a torus in $S^4$ can be represented by a welded knot, whose fundamental group and peripheral subgroup are isomorphic to the fundamental group and peripheral subgroup of the torus. Given any nontrivial ribbon embedding of $S^2$ into $S^4$, we may take the connected sum of this surface knot with a torus forming the boundary of a genus-$1$ handlebody in $S^4$. The resulting knotted torus will be ribbon and have a trivial longitude. It follows from Satoh's correspondence between welded knots and ribbon embeddings of tori in $S^4$ that a welded knot which represents this knotted torus will be nontrivial, but will have a trivial longitude. Diagrams of such welded knots will also provide examples of virtual knots which are not virtually equivalent to classical knots.

\begin{figure}
		\centering
			\includegraphics[scale=0.5]{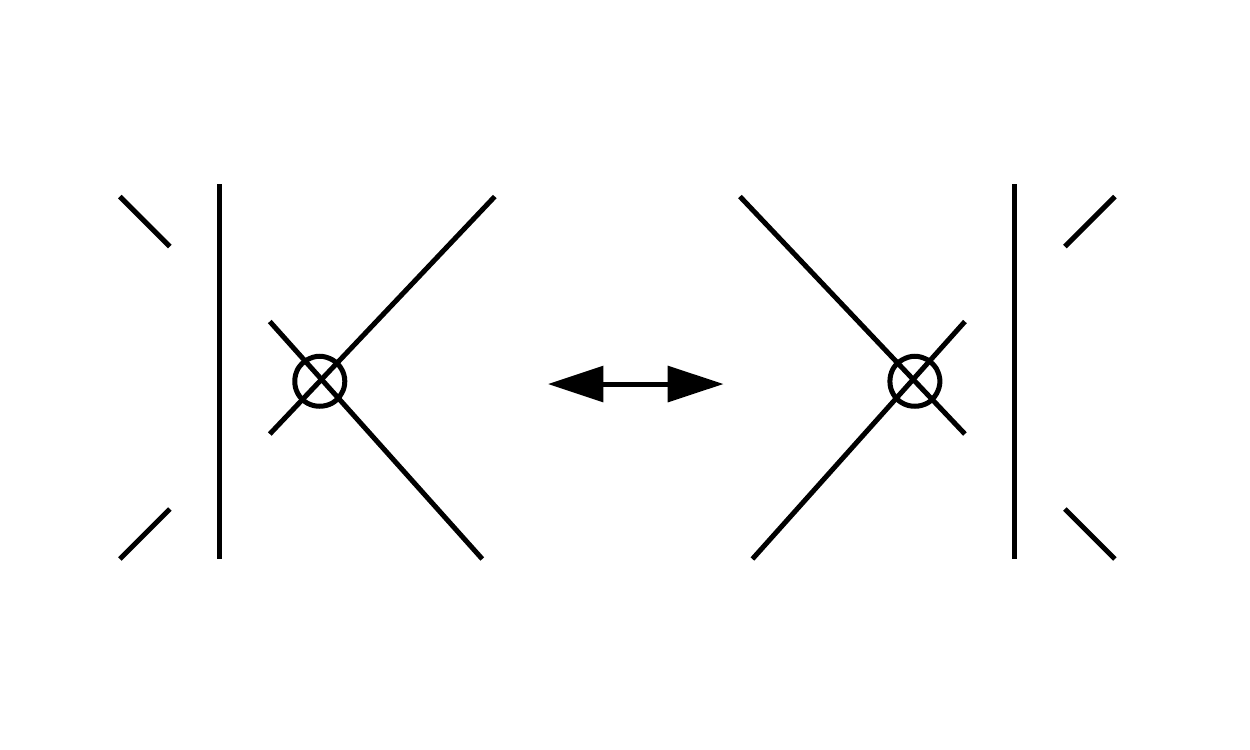}
		\caption{The ``forbidden move,'' which is not allowed for virtual $1$-links but is permitted for welded $1$-links.}
		\label{forbidden}
\end{figure}

As we will see, the fact that every welded $1$-link diagram is also a virtual $1$-link diagram is unique to the case $n=1$; for larger $n$, the geometrically-motivated definition of a welded link, which we will give, does not reduce to the notion of a virtual link with an additional move permitted.

\section{Ribbon Links}


Because we will be making reference to ribbon links and their ribbon presentations in further sections, we give here a brief review of these concepts. See \cite{CKS} for further discussion of ribbon links in arbitrary dimensions.

\begin{remark}
Throughout, we will use the notation $D^n$ to represent a closed $n$-ball.
\end{remark}

Let $M$ be an $(n+2)$-manifold. Consider a pair $(B, H)$, consisting of a set of disjoint oriented $(n+1)$-disks $B=\{B_{i}:D^{n+1}\rightarrow M\}$ and a collection $H$ of oriented $1$-handles $H_{j}: D^1\times D^n\rightarrow M$, with $H_j(\{ 0,1\}\times D^n)$ embedded in $\cup_{B_i\in B}B_i(\partial D^{n+1})$, such that the orientations agree, and such that for each $i, j$, the components of $H_j((0,1)\times D^n)\cap B_i(D^{n+1})$ are a finite collection of $n$-disks embedded in $H_j((0,1)\times D^n)$, such that each component separates $H_j(\{0\}\times D^n)$ from $H_j(\{1\}\times D^n)$. This implies that $H_j((0,1)\times D^n)\cap B_i(D^{n+1})$ is contained in the interior of the image of $B_i$; such intersections between a handle and a base are called \emph{ribbon intersections} or \emph{ribbon singularities}. A handle can intersect any number of bases in ribbon intersections, and it can intersect the same base multiple times (or zero). We will in general identify the maps $B_i$ and $H_j$ with their images.
The disks in $B$ are called \emph{bases}, while the elements of $H$ are called \emph{(fusion) handles} or \emph{bands}.
Let $K$ be the link defined as $K=({\cup}_{B_i\in B}(\partial B_{i}-{\cup}_{H_j\in H}H_{j}))\cup({\cup}_{H_j\in H}H_{j}(D^1\times\partial D^n))$. Then $K$ is called a \emph{ribbon link}, and the pair $(B, H)$ are a \emph{ribbon presentation} for $K$. We identify two ribbon presentations, $(B, H)$ and $(B', H')$, of $K$ if there is a path between them in the space of ribbon presentations of $K.$ This is equivalent to requiring that $|B|=|B'|$, $|H|=|H'|$, and that there exists an ambient isotopy $f_t$ of $M$, $t\in [0,1]$, such that $f_0$ is the identity, and for all $B_i\in B, H_j\in H, B'_i\in B', H'_j\in H'$, $f_1\circ B_i=B'_i$, and $f_1\circ H_j=H'_j$.

The subspace $R=(\cup_{B_i\in B}B_{i})\cup(\cup_{H_j\in H}H_{j})$ is called a \emph{ribbon solid} for the ribbon presentation or for the link. Note that a given $R$ may be a ribbon solid for multiple ribbon presentations. If an $n$-link $K$ has a ribbon presentation then it is called a \emph{ribbon link}.

Observe that since we require that the handles be glued to the bases in an orientation-preserving manner, $K$ will be orientable. Note also that for ribbon $1$-links, the components of a ribbon link may not be ribbon knots. However, for $n$-links with $n\geq 2$, each component of a ribbon link will automatically be a ribbon knot.

Given a ribbon link $K$, there are in general multiple ribbon presentations for $K$, making it convenient to consider notions of equivalence for ribbon presentations. We will, for the moment, restrict our attention to knots; the generalization to links is straightforward. Let $K$ be a ribbon knot with ribbon presentations $(B, H)$ and $(B, H')$. These presentations are called \emph{simply equivalent} if $|H|=|H'|$, $|B|=|B'|$, and there exists an isotopy $f_t$, $t\in [0,1]$, of $M$, such that $f_0$ is the identity, and for all $B_i\in B, H_j\in H, B'_i\in B', H'_j\in H'$, $f_1\circ B_i|_{\partial D^{n+1}}=B'_i|_{\partial D^{n+1}}$, and $f_1\circ H_j=H'_j$.

Note that simple equivalence may create new intersections of handles with the interiors of bases, as illustrated in Fig. \ref{ribbonstable2}.

\begin{figure}
		\centering
			\includegraphics[scale=0.3]{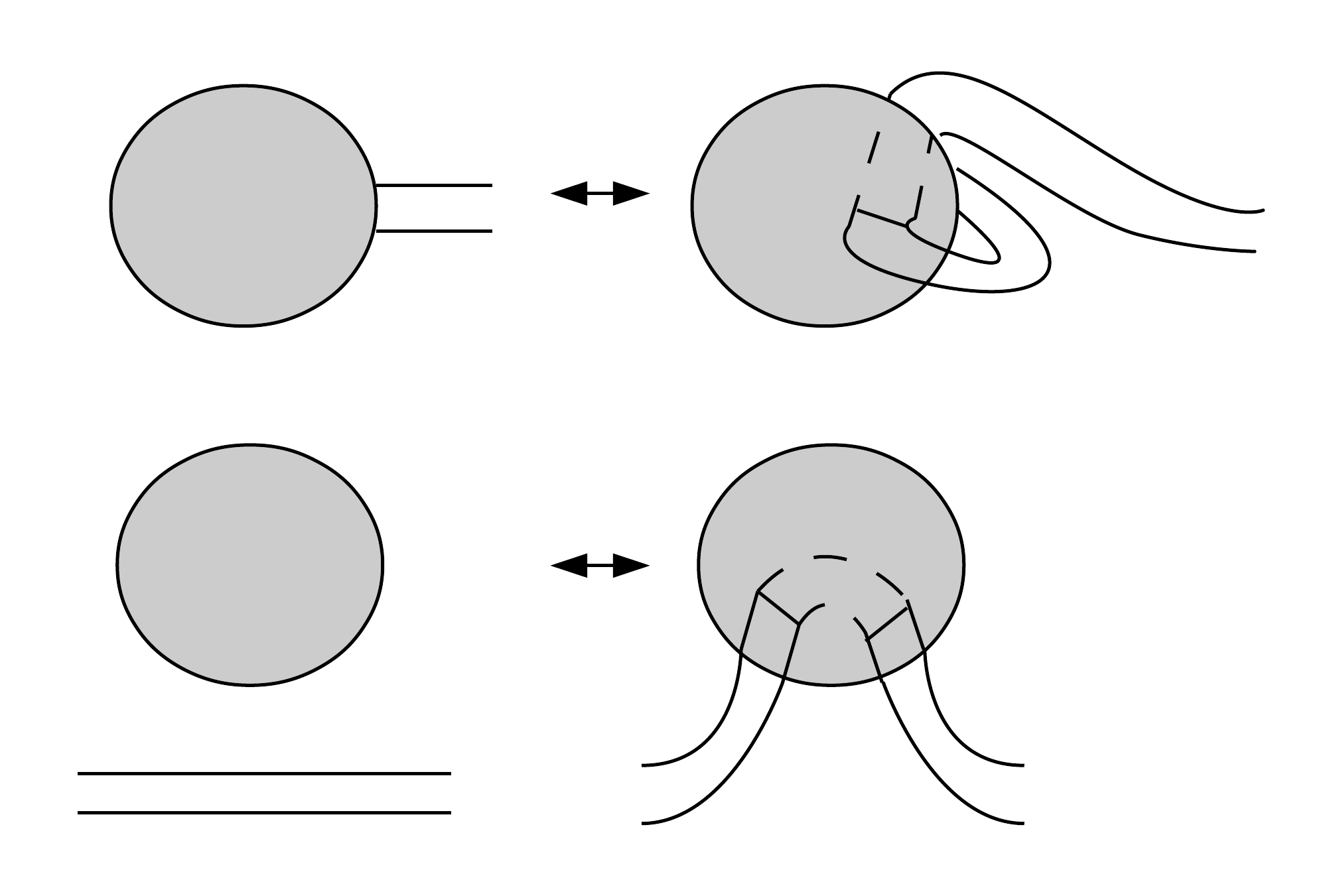}
		\caption{Two simply equivalent ribbon presentations with distinct ribbon intersections along a single handle. Here, the bases are indicated with shading; ribbon intersections are shown by thickened lines.}
		\label{ribbonstable2}
\end{figure}

\begin{lemma}
For $n\geq 2$ a ribbon presentation $(B,H)$ in a simply-connected manifold $M$ is determined uniquely up to simple equivalence by specifying the following ribbon data:\\
(a) the cardinality of $B$ and,\\
(b) for each $H_{j}\in H$, a sequence of bases, $B_{j1},...,B_{jk},$
through which the handle $H_j$ passes through, up to reversal of that sequence.
\end{lemma}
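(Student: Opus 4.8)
The plan is to show that every ribbon presentation carrying the prescribed data (a) and (b) is simply equivalent to a single standard model presentation constructed directly from that data; since simple equivalence is transitive (isotopies compose), any two presentations sharing the same data are then simply equivalent to one another. Constructing the model is unobstructed: because $M$ is connected with $\dim M = n+2 \geq 4$, I place the $|B|$ bases as disjoint standard round $(n+1)$-disks, and because $M$ is simply connected I route each band so that its core is an embedded arc running between the correct pair of boundary spheres and crossing the prescribed sequence of bases transversally in one point each. The substance of the lemma is that an \emph{arbitrary} presentation with this data can be carried onto the model, so I organize the argument in two stages: first normalize the bases, then normalize the handles.

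For the bases, I would invoke the fact that in a simply connected manifold of dimension $\geq 4$ any two embeddings of a disjoint union of $(n+1)$-disks are ambient isotopic: disks are contractible, so there is no knotting or framing obstruction in this codimension, and disjointness is preserved by general position. Via isotopy extension this yields an ambient isotopy of $M$ taking the bases of $(B,H)$ onto those of the model, agreeing with them at least on their boundary spheres $\partial D^{n+1}$, and dragging the handles $H$ along to new bands $\tilde H$. After this stage the two presentations share the same bases on their boundaries, and—crucially—the remaining work is free to move the \emph{interiors} of the bases, since simple equivalence only constrains $f_1 \circ B_i|_{\partial D^{n+1}}$.

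For the handles I would argue band by band, passing to cores. The core of a band is an arc whose endpoints lie on the two prescribed boundary spheres and which meets the prescribed sequence of bases transversally. Since $M$ is simply connected, any two such arcs with matching endpoints are homotopic rel endpoints, and because they have codimension $n+1 \geq 3$ they are therefore ambient isotopic rel endpoints, with distinct cores kept mutually disjoint throughout. The only added requirement is that the isotopy realize the correct ordered pattern of base crossings, which I can arrange because the freedom to reposition base interiors lets me slide crossing points into standard position. I then match the attaching $n$-disks inside each boundary sphere $S^n$—for $n \geq 2$ any two orientation-compatible families of disjoint $n$-disks in $S^n$ are isotopic within the sphere—and finally thicken each matched core in a small tubular neighborhood; as the normal bundle of an arc is trivial and framings over a contractible base are unique up to homotopy, the thickening recovers the full band $D^1 \times D^n$ uniquely up to isotopy. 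This produces an ambient isotopy, fixing all base boundaries, carrying $\tilde H$ to the model bands and completing the reduction.

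The main obstacle is the simultaneous control demanded by the handle stage: I must produce one isotopy that unknots and disjoins all cores, realizes each prescribed ordered sequence of base crossings, and respects the attaching regions, all without moving any base boundary and without disturbing bands already placed in standard position. The delicate point is forcing the crossing pattern to be \emph{exactly} the recorded combinatorial sequence: a geometric accident in which a core pokes into a base interior and back out creates a cancelling pair of ribbon intersections that is absent from the data, and eliminating such pairs is precisely where moving the base interiors—permitted by simple equivalence but not by the stronger ambient-isotopy equivalence of presentations—becomes essential, matching the phenomenon illustrated in Fig. \ref{ribbonstable2}. Carrying out these cancellations in order, band by band and base by base, while maintaining transversality and disjointness, is the technical heart of the proof.
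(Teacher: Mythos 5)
Your proof is correct and rests on the same key fact as the paper's: the core of a handle is $1$-dimensional, so in ambient dimension $n+2\geq 4$ its homotopy class determines its isotopy class, and the ribbon data pins down that homotopy class. The one genuine organizational difference is \emph{where} the homotopy is taken. You homotope cores in $M$ itself (where simple connectivity makes all arcs with fixed endpoints homotopic) and must then separately force the isotopy to realize the prescribed ordered sequence of base passages --- the step you rightly single out as the ``technical heart,'' requiring cancellation of spurious ribbon-intersection pairs by moving base interiors. The paper instead takes the homotopy class of the core in $M-\bigcup_i\partial B_i$: since $M$ is simply connected and the $B_i$ are disjoint contractible codimension-one disks, the homotopy class of an arc rel endpoints in that complement \emph{is} its reduced sequence of passages through the bases, so the crossing pattern is automatically encoded and no separate realization step is needed; spurious cancelling pairs are absorbed by simple equivalence exactly as in Fig.~\ref{ribbonstable2}. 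Your additional scaffolding --- normalizing the bases via the disk theorem and thickening cores back to $D^1\times D^n$ --- is sound and is simply suppressed in the paper's two-sentence proof; the only small caution is that rel-endpoints uniqueness of the thickening is governed by $\pi_1$ of the space of $n$-disks in the normal $(n+1)$-disk, i.e.\ by $\pi_1(S^n)=0$ for $n\geq 2$, rather than by contractibility of the core alone.
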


\bpr Clearly there is a ribbon presentation fulfilling every possible ribbon data as above.
We need to show that such ribbon presentation is unique, up to simple equivalence. Note that for for each $j$, the sequence $B_{j1},...,B_{jk},$ determines the homotopy class of the handle $H_j$ in $M-\cup_i \p B_i$ uniquely. Since $n\geq 2$ and the core of the handle is $1$-dimensional, the homotopy class of the handle determines its isotopy class as well.
\epr

In $n=1$ this is of course not the case, since the homotopy class of the handle does not determine its isotopy class in $M-\cup_i \p B_i$. In addition, one must specify a cyclic ordering  of handles glued to a base for $n=1$.

There is a broader notion of equivalence between ribbon presentations called \emph{stable equivalence}, defined by Nakanishi, \cite{Naka}. This is the equivalence relation generated by simple equivalence together with the following three moves and their inverses, which are illustrated in Fig. \ref{ribbonstable}:

\begin{enumerate}
	\item Add a new base $B_{0}$ with a handle $H_{0}$ starting on $B_{0}$ and terminating on any other base, without passing through any bases.
	\item \emph{Handle pass}: Isotope $H_{j}$ through $H_{j'}([0,1]\times (D^n-\partial D^n))$, leaving $H_j|_{\{ 0,1\}\times D^n}$ fixed.
	\item \emph{Handle slide}: If $H_{j}$ connects $B_{i}$ with $B_{i'}$, and $H_{j'}$ has an end on $B_{i}$, we may slide that end along $H_{j}$ over to $B_{i'}$.
\end{enumerate}

\begin{figure}
		\centering
			\includegraphics[scale=0.6]{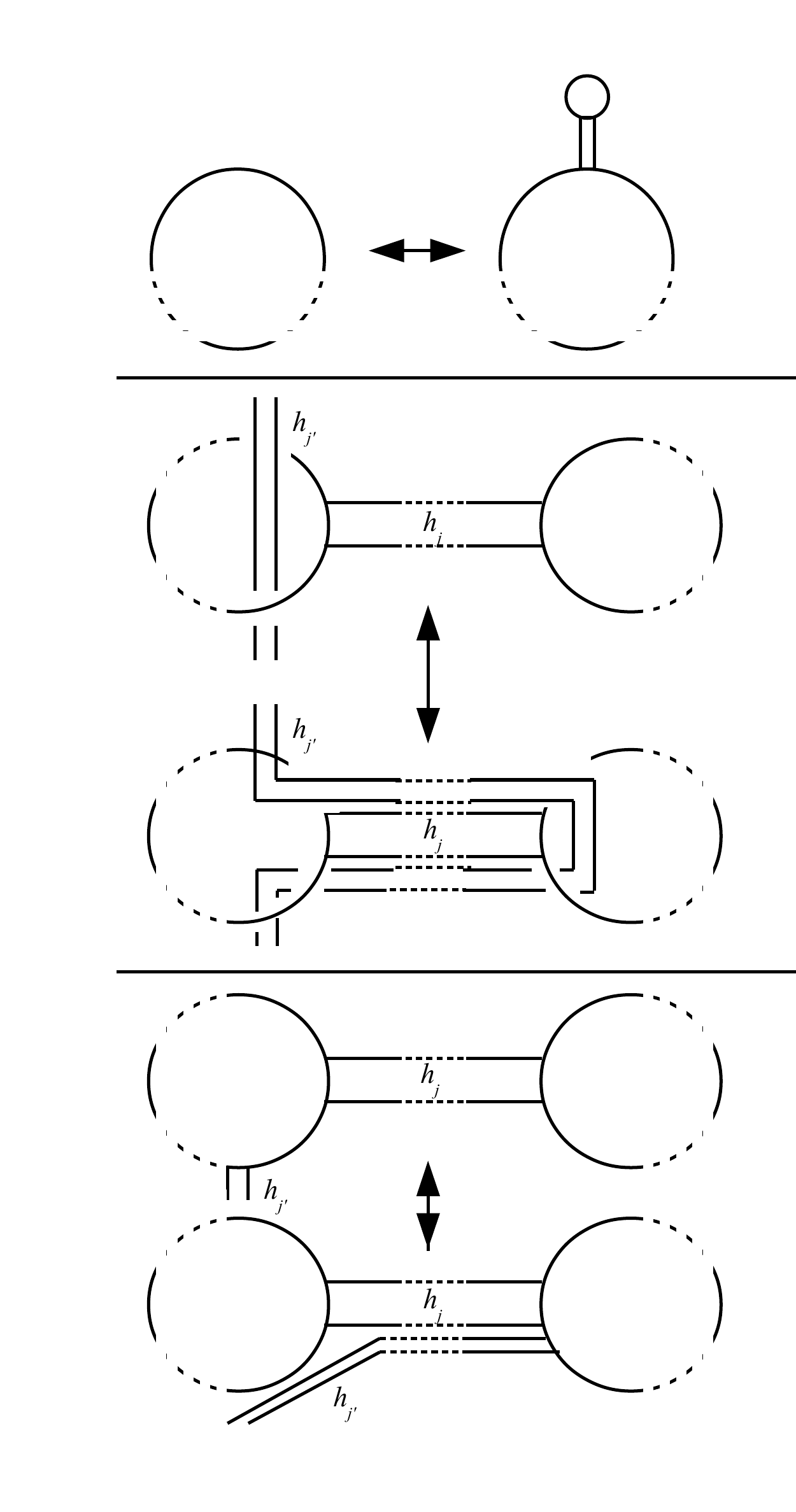}
		\caption{The three moves permitted for ribbon presentations under stable equivalence.}
		\label{ribbonstable}
\end{figure}

Observe that for a knotted $n$-sphere, $|H|=|B|-1$ by Euler characteristic arguments. It is easy to check that for $n\geq 2$, a ribbon $n$-knot $K$ will have the homeomorphism type of an $n$-sphere with $|H|-|B|+1$ $1$-handles attached.

Given a ribbon $n$-knot $K$, it is an open question whether any two ribbon presentations for $K$ are related by stable equivalence for $n\geq 2$. For the case $n=1$, Nakanishi and Nakagawa provide a family of knots, each of which admits multiple ribbon presentations which are not stably equivalent, \cite{NakaNaka, Naka}.

\section{Quandles and Biquandles}\label{QaB}

Quandles and biquandles are nonassociative algebraic structures which provide invariants for classical and virtual links. As we will show, they also provide invariants for our generalization of virtual links to higher dimensions. Since we will be referring to these structures repeatedly, we list their definitions here for reference.

A \emph{quandle}, \cite{DJ}, (also called a \emph{distributive groupoid} in \cite{Mat}) $(Q, *)$ consists of a set $Q$ and a binary operation $*$ on $Q$ such that for all $a, b, c \in Q$, the following equalities hold.

\begin{enumerate}
	\item $a*a=a$.

\item There is a unique $x\in Q$ such that $x*b=a$.

\item $(a*b)*c=(a*c)*(b*c)$.

\end{enumerate}
When the operation $*$ is implied, we may denote the quandle by $Q$, suppressing the operation.

\begin{example}
For any group $G$, $(G,*)$ is a quandle, for $a*b=a^{b},$ (i.e. $bab^{-1}$). We call $(G,*)$ the group quandle of the group $G.$
\end{example}

It is easily checked that this defines a forgetful functor from the category of quandles to the category of groups.

In general, it is common to denote the $*$ operation in any quandle $Q$ by using the conjugation notation, that is, for any quandle we may define $a^{b}=a*b$. Analogously, we will denote the element $x$ stipulated by property (2) by $a^{\bar{b}}$, i.e. $(a^b)^{\bar{b}}=a$. By convention, we interpret $a^{bc}=(a^b)^c$. In Section \ref{geoqu} we review Joyce's geometric definition of the fundamental quandle of the complement of any $n$-link.

While a quandle can be thought as a set with two binary operations, $a^b$ and $a^{\bar b},$ a \emph{(strong) biquandle}, \cite{BQ1, biq, Carrell}, is a set $B$ together with four binary operations, often denoted using exponent notation as $a^{b}$, $a_{b}$, $a^{\bar{b}}$, $a_{\bar{b}}$. As for quandles, these operations are assumed to be associated from left to right unless otherwise specified with parentheses, for example, $a^{bc}=(a^b)^c$. These operations are required to satisfy the following properties:

\begin{enumerate}
\item For any fixed $b$, the functions sending $b$ to $a^{b}$, $a_{b}$, $a^{\bar{b}}$, $a_{\bar{b}}$ are bijective functions on $B$ with an argument $a$ (this implies all four operations have right inverses).

\item For any $a, b, c \in B$, $c=a_{c}$ iff $a=c^{a}$, and $b=a^{\bar{b}}$ iff $a=b_{\bar{a}}$.

\item The following equalities hold for all $a, b, c \in B$:
$a=a^{b\overline{b_{a}}}$,
$b=b_{a\overline{a^{b}}}$,
$a=a^{\overline{b}b_{\overline{a}}}$,
$b=b_{\overline{a}a^{\overline{b}}}$,
$a^{bc}=a^{c_{b}b^{c}}$,
$c_{ba}=c_{a^{b}b_{a}}$,
$(b_{a})^{c_{a^{b}}}=(b^{c})_{a^{c_{b}}}$,
$(b_{\overline{a}})^{\overline{c_{\overline{a^{\overline{b}}}}}}=(b^{\overline{c}})_{\overline{a^{\overline{c_{\overline{b}}}}}}$,
$a^{\overline{bc}}=a^{\overline{c_{\overline{b}}b^{\overline{c}}}}$,
$c_{\overline{ba}}=c_{\overline{a^{\overline{b}}b_{\overline{a}}}}$.
\end{enumerate}



Every quandle $(Q, *)$ can be made into a biquandle by defining the biquandle operations to be $a^b=a*b$, $a^{\overline{b}}=a*^{-1}b$, $a_b=a_{\overline{b}}=a$. Another family of examples may be constructed by taking $B$ to be a module over $\Z [s, t, s^{-1},t^{-1}]$. Then we define biquandle operations on $B$ to be $a^b=ta+(1-st)b$, $a^{\overline{b}}=t^{-1}a+(1-s^{-1}t^{-1})b$, $a_b=sa$, and $a_{\overline{b}}=s^{-1}a$. Then $B$ is a biquandle called an \emph{Alexander biquandle}, \cite{NelLam}.

Although the axioms for a biquandle appear somewhat abstract, they are all motivated by considering labelings of semi-arcs in diagrams for virtual links or labelings of sheets of broken surface diagrams for surfaces embedded in $S^{4}$, and then requiring that the resulting algebraic structure must be invariant under the Reidemeister or $2$-dimensional Roseman moves.

We may also define \emph{weak} biquandles by dropping the first axiom (i.e. the axiom that each biquandle operation has a right inverse). However, for our purposes, we will always use the term ``biquandle'' to indicate a strong biquandle, unless otherwise noted.

There are also other conventions for denoting the biquandle operations; we have followed Carrell's notation, \cite{Carrell}, since that source discusses applications of biquandles to knotted surfaces. Biquandles can also be defined as sets $Q$ with two right-invertible operations which satisfy the following identities (see \cite{Stan} for a discussion of these): $a_{(bc)}=a_{c^{b}b_c}$, $a^{(bc)}=a^{c_{b}b^c}$, $(a_b)^{c_{b^a}}=(a^c)_{b^{c_a}}$, $(a_{\overline{a}})^{\overline{a_{\overline{a}}}}=a$.

Biquandles have been found to be useful for distinguishing virtual $1$-knots that are indistinguishable by quandles. For example, the Kishino knot, Fig. \ref{KI}, is a virtual knot whose quandle is isomorphic to the quandle of the unknot. However, it can be distinguished from the unknot using the biquandle, \cite{Kauff2, BF}.

\begin{figure}
		\centering
			\includegraphics[scale=0.5]{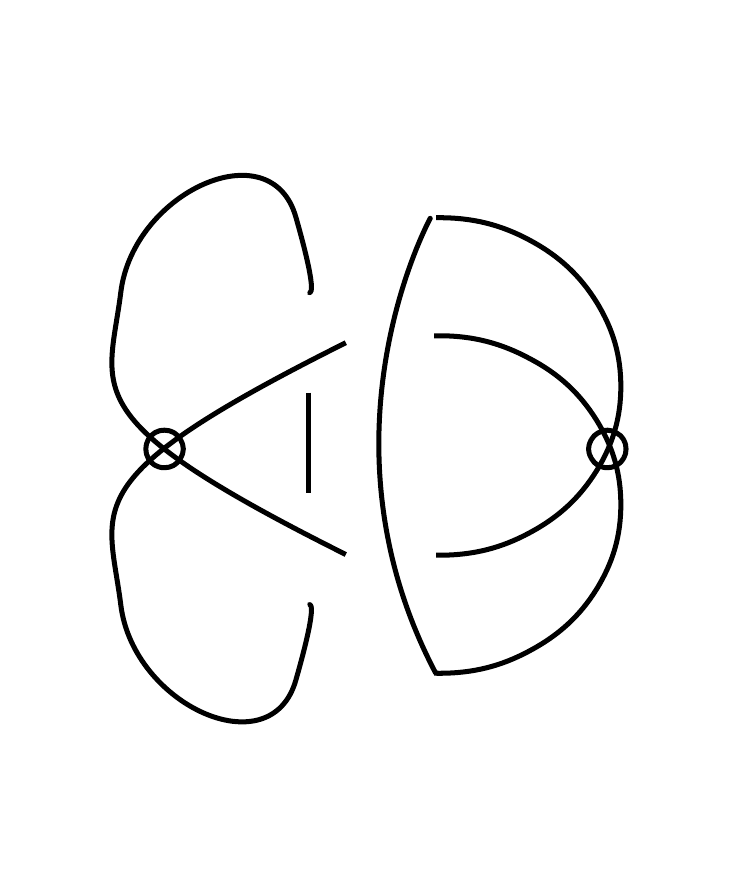}
		\caption{A virtual knot diagram of the Kishino knot.}
		\label{KI}
\end{figure}
%
\section{The Link Quandle and Group in Arbitrary Dimensions}
\label{geoqu}
%

Quandles as knot invariants were introduced in \cite{DJ, Mat}.
In this section we discuss the geometric definition of the fundamental quandle of an $n$-knot, originally given by Joyce in \cite{DJ}. In that paper, it was shown that the quandle of a classical knot is determined by its knot group together with the specification of a meridian and the peripheral subgroup. We will show that this is true for arbitrary (orientable) knots in spheres, as well as for virtual $n$-knots.


Let $L$ be a link in $M$, with $L$ and $M$ assumed to be orientable, and let $N(L)$ be an open tubular neighborhood of $L$. Then $M-N(L)$ is a manifold with boundary containing $\partial \overline{N(L)}$. The fundamental group of this space is called the \emph{link group}. We will follow the convention that the product of two homotopy classes of paths goes from right to left, i.e. $\gamma' \gamma$ is the homotopy class of a path which follows $\gamma$ and then $\gamma'$.
We call the image $P$ of the homomorphism induced
by the injection $\partial \overline{N(L)}\rightarrow M-N(L)$ the \emph{peripheral} subgroup of $\pi_1(M-L).$ Note that $P$ is defined only up to conjugation;
any of its conjugates are also peripheral subgroups.

A meridian $m$ of $L$ is the boundary of the disk fibre of the tubular neighborhood of $L$ considered as a disk bundle over $L$. Note that the orientations of $M$ and of $L$ determine an orientation of $m$. A meridian is defined uniquely up to conjugation.

\begin{figure}[htbp]
\begin{centering}
\includegraphics[scale=.8]{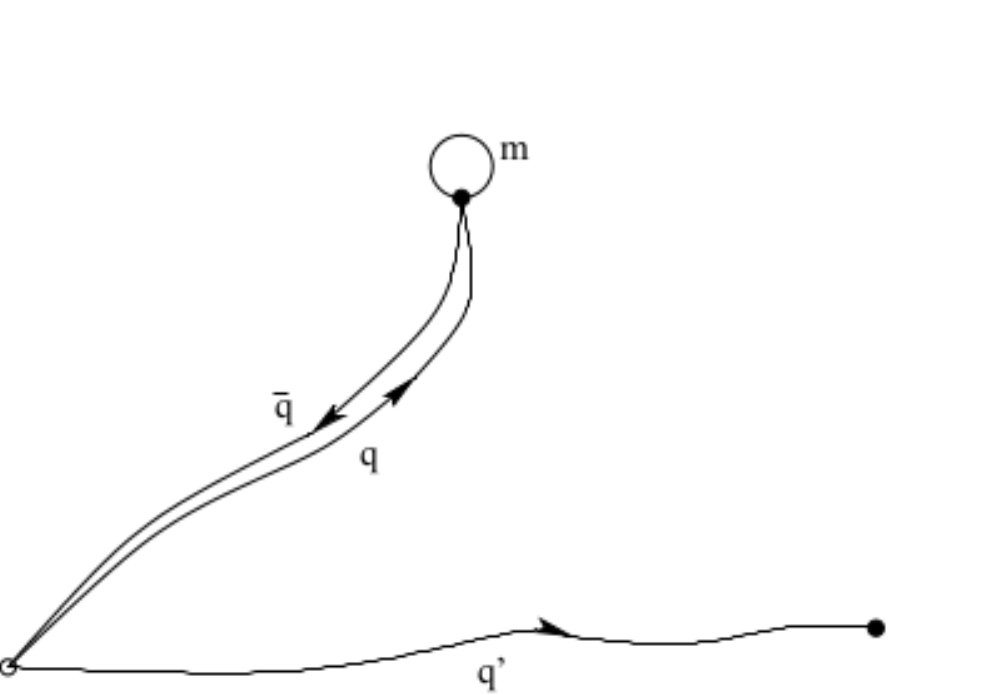}
\caption{The quandle operation for the fundamental quandle of a link $L$ in any space $M$. The
open circle indicates the basepoint of $M$, while the black circles represent points on the boundary of a tubular neighborhood of $L$.}
\label{qop}
\end{centering}
\end{figure}

The definition of the quandle, and of the fundamental group of a link complement, requires concatenation of paths. We will use the following convention for path concatenation: given two paths $\gamma, \gamma'$, we will write $\gamma' \gamma$ to indicate the path that follows $\gamma$ first, and then follows $\gamma'$. This convention makes it easier to discuss the action of the link group on the quandle.

We define the quandle $Q(M,L)$ associated to the pair $(M,L)$ and a chosen base point $b\in M-N(L)$ as follows. The elements of $Q(M,L)$ are homotopy classes of paths in $M-N(L)$ that start at $b$ and end in $\partial \overline{N(L)}$, where homotopies are required to preserve these two conditions. The quandle operation, $[q]^{[q']}$, between two elements $[q], [q']\in Q(M,L)$ is defined as the homotopy class of $q\overline{q'}mq'$, where the overbar indicates that the path is followed in reverse and $m$ is the meridian of $L$ passing through the endpoint of $q$, cf.  \ref{qop}. It is a straightforward exercise to show that this binary operation is well-defined and it satisfies the definition of a quandle operation. We call $Q(M,L)$ the \emph{link quandle} of $L$ (or the knot quandle if $L$ is a knot).

Note that there is a right action of $\pi_{1}(M-N(L))$ on $Q(M,L)$: $[q]g$ is the equivalence class including the homotopy class of the path $qg$.

\subsection{Quandles From Groups}
Given a group $G$, a subgroup $P$, and an element $m$ in the center, $Z(P),$ of $P$, Joyce defines a quandle operation on the set of right cosets of $P$, $P\backslash G$, as follows: $Pg^{Ph}=P(gh^{-1}mh)$. This operation is well-defined, because if $h'\in Ph$, then $h'=ph$ for some $p\in P$ and
\begin{equation}
  h'^{-1}mh'=h^{-1}p^{-1}mph=h^{-1}mh,
\end{equation}
since $m\in Z(P)$.
We will denote a quandle constructed in this way
by $(P\backslash G,m)$, or simply $P\backslash G$ when there is a canonical choice for $m$, for example,
when $m$ is a meridian and $P$ a peripheral subgroup of a knot.

\subsection{Group Actions on Quandles}
The results in this section are due to Joyce, \cite{DJ}.


For any link $L$ in $M$, $\pi _{1}(M-L)$ acts on $Q(M,L)$ by setting $[q]g$ to be the homotopy class of the path $qg$.

Consider now a knot $K\subset M$ and a path $m_Q$ connecting a fixed base point $b\in M-N(K)$ with $\p N(K).$  (Clearly, $m_Q\in Q(M,K).$)
Note that $m_Q$ defines a peripheral subgroup $P$ of $\pi_1(M-N(K),b).$
Its elements are of the form $\overline{m_Q}\alpha m_Q,$ where $\alpha$ is any loop in $\p N(K)$ based at the endpoint of $m_Q.$ In particular, $m_Q$ determines a meridian $m\in P.$


\begin{lemma}\label{G-act-on-Q}
For any knot $K\subset M$,\\
(1) the $\pi_{1}(M-K)$ action on $Q(M,K)$ is transitive, i.e. every element of $Q(M,K)$ is equal to $m_{Q}g$ for some $g\in \pi_{1}(M-K)$.\\
(2) $P$ is the stabilizer of the $\pi_{1}(M-K)$ action on $m_Q\in Q(M,K),$
i.e. $m_Qg=m_Q$ iff $g\in P.$
\end{lemma}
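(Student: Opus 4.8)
The plan is to prove both statements directly from the geometric definitions, treating the quandle elements as homotopy classes of paths from the basepoint $b$ to $\p N(K)$ and the group as $\pi_1(M-N(K),b)$. For part (1), transitivity, I would start with an arbitrary element $[q]\in Q(M,K)$, so $q$ is a path from $b$ ending at some point $p_q\in\p N(K)$. Since $\p N(K)$ is connected (here we use that $K$ is a \emph{knot}, so its tubular neighborhood boundary is connected), I can choose a path $\alpha$ inside $\p N(K)$ from the endpoint of $m_Q$ to $p_q$. The natural candidate for the group element is then $g=\overline{m_Q}\,\alpha^{-1}\,q$ — read with the right-to-left concatenation convention fixed in the text — which is a loop based at $b$, hence an element of $\pi_1(M-N(K),b)$. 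The key computation is to check that $m_Q g$, defined as the class of $q_{m}g$ where $q_m$ represents $m_Q$, is homotopic to $q$ through paths keeping the startpoint at $b$ and the endpoint on $\p N(K)$; the segment $\alpha$ living in $\p N(K)$ may be slid off using exactly the freedom allowed in the equivalence relation defining $Q(M,K)$ (endpoints are free to move within $\p N(K)$). This shows every $[q]=m_Q g$, giving transitivity.

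For part (2), I would show the two inclusions. First, if $g\in P$, then by the description given just before the lemma, $g$ is represented by a loop of the form $\overline{m_Q}\,\beta\,m_Q$ with $\beta$ a loop in $\p N(K)$ based at the endpoint of $m_Q$. Computing $m_Q g$, the outer $m_Q$ and $\overline{m_Q}$ concatenate with the representative of $m_Q$ so that the resulting path is $q_m$ followed (at its endpoint) by the loop $\beta$ lying entirely in $\p N(K)$; since homotopies in $Q(M,K)$ allow the terminal endpoint to move freely within $\p N(K)$, this loop $\beta$ can be contracted into the endpoint, so $m_Q g=m_Q$. Conversely, suppose $m_Q g=m_Q$ in $Q(M,K)$. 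Then the path $q_m g$ is homotopic to $q_m$ rel $b$ and through endpoints in $\p N(K)$. Composing with $\overline{m_Q}$ on the left shows that $g$ is homotopic, through loops based at $b$ whose ``far'' portion is free to move in $\p N(K)$, to a loop of the form $\overline{m_Q}\,\beta\,m_Q$ with $\beta$ a loop in $\p N(K)$; by definition such an element lies in $P$.

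The main obstacle, and the point needing the most care, is the bookkeeping of the homotopies that are allowed to move endpoints along $\p N(K)$: the equivalence relation defining $Q(M,K)$ is \emph{not} homotopy rel endpoints but homotopy with the terminal point constrained only to remain on $\p N(K)$. I must therefore be careful to distinguish, at each step, between genuine $\pi_1$-equivalences (rel $b$, closed loops) and $Q$-equivalences (free terminal endpoint on the boundary), and to verify that the path manipulations respect these distinct relations. In particular the converse direction of (2) is the delicate one: one must extract from a $Q$-homotopy an honest element of the peripheral subgroup, and this is where the freedom to homotope the endpoint around $\p N(K)$ is exactly absorbed into the loop $\beta$ defining membership in $P$. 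Once this is set up correctly, both statements follow from the path computations without further difficulty.
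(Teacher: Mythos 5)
Your proposal is correct and follows essentially the same route as the paper: for (1) you slide the endpoint of $q$ along $\partial N(K)$ to match that of $m_Q$ and take $g=\overline{m_Q}q$ (you just make the connecting path $\alpha$ explicit), and for (2) you prove both inclusions by tracking the loop traced by the terminal endpoint during the $Q$-homotopy, exactly as the paper does. The care you flag about distinguishing homotopy rel endpoints from homotopy with the terminal point free on $\partial N(K)$ is precisely the point on which the paper's converse direction also rests.
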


\bpr (1) Choose representatives of $q$ with an endpoint coinciding endpoints on $\partial N(K)$. Then $q=m_Qg,$ for $g\in \pi_{1}(M-N(K))$ given by
the path $m_{Q}\overline{m_{Q}}q$.\\
(2) Since every $g\in P$ is of the form $\overline{m_Q}\alpha m_Q$,
$m_Qg=m_Q\overline{m_Q}\alpha m_Q=\alpha m_Q$ which can be homotoped along $\alpha$ to $m_Q$.
Conversely, suppose that $m_Qg=m_Q.$ The homotopy transforming $m_Qg$ into $m_Q$ moves the endpoint of $m_Qg$ along a closed loop $\alpha$ in $\p N(K).$
That means that $m_Qg$ and $\alpha m_Q$ are homotopic in $M-N(K)$ with their endpoints fixed. Consequently, $g=\overline m_Q \alpha m_Q,$ i.e. $g\in P.$

\epr

Note that Lemma \ref{G-act-on-Q}(2) implies that the stabilizer of $m_{Q}g$ is $g^{-1}Pg$.


Consider the peripheral subgroup $P$ and the meridian $m$ determined by a path $m_Q$ from the base point $b$ in $M-N(K)$ to $\p N(K)$, as above. Then the map
$\Psi:Q(M, K)\rightarrow (P\backslash \pi_{1}(M-K,b),m)$ sending $m_{Q}g$ to $Pg$ is well defined by Lemma \ref{G-act-on-Q}(2).

\begin{theorem}\label{main}
$\Psi:Q(M, K)\rightarrow (P\backslash \pi_{1}(M-K,b),m)$  is an isomorphism of quandles. Consequently, $Q(M,K)$ is determined by $\pi_{1}(M-K)$ with its peripheral structure.
\end{theorem}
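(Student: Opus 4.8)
The plan is to show that $\Psi$ is a well-defined bijective quandle homomorphism by leveraging the transitivity and stabilizer results of Lemma \ref{G-act-on-Q}. First I would verify that $\Psi$ is well-defined: if $m_Q g = m_Q g'$ in $Q(M,K)$, then $m_Q(g'g^{-1}) = m_Q$, so by Lemma \ref{G-act-on-Q}(2) we have $g'g^{-1}\in P$, whence $Pg = Pg'$. This is precisely the content already recorded just before the theorem statement, so the map on underlying sets is unambiguous.

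Next I would establish bijectivity. Surjectivity is immediate since every coset $Pg$ is hit by $m_Q g$. For injectivity, suppose $Pg = Pg'$; then $g'g^{-1}\in P$, so again by Lemma \ref{G-act-on-Q}(2), $m_Q(g'g^{-1}) = m_Q$, giving $m_Q g = m_Q g'$. Transitivity of the action (Lemma \ref{G-act-on-Q}(1)) guarantees that every element of $Q(M,K)$ has the form $m_Q g$, so these two checks cover all elements and $\Psi$ is a set bijection.

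The main step is verifying that $\Psi$ respects the quandle operations, i.e. that $\Psi([q]^{[q']}) = \Psi([q])^{\Psi([q'])}$ in the target quandle $(P\backslash \pi_1(M-K,b), m)$. Writing $[q] = m_Q g$ and $[q'] = m_Q h$, I must unwind the geometric operation $[q]^{[q']} = [q\,\overline{q'} m' q']$, where $m'$ is the meridian through the endpoint of $q$, and match it against the group-coset formula $Pg^{Ph} = P(gh^{-1}mh)$. The key computation is to recognize that conjugating by the path $q'$ transports the fixed meridian $m$ (determined by $m_Q$) to the meridian $m'$ at the endpoint of $q$; concretely, following $q' = m_Q h$, reversing, inserting $m$, and returning should realize $m'$ up to the relevant homotopy, so that $q\,\overline{q'} m' q'$ represents $m_Q (g h^{-1} m h)$. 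I would carry this out by expressing each path explicitly in terms of $m_Q$, $g$, $h$, and $m$ and simplifying using the concatenation convention (right-to-left) fixed earlier in the section.

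The hardest part will be this last verification, because it requires care in tracking basepoints and the direction of concatenation: the meridian appearing in the geometric operation sits at the endpoint of $q$ rather than at the fixed basepoint, and one must correctly conjugate it back. Once the homomorphism property is confirmed, combining it with the bijectivity gives that $\Psi$ is a quandle isomorphism, and the final sentence of the theorem follows immediately, since $P$ and $m$ are determined by $\pi_1(M-K)$ together with its peripheral structure.
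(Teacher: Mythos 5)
Your proposal is correct and follows essentially the same route as the paper: bijectivity from the two parts of Lemma \ref{G-act-on-Q}, and the homomorphism property by writing $[q]=m_Qg$, $[q']=m_Qh$ and simplifying $q\overline{q'}mq'$ to $m_Q(g\overline{h}mh)$ so that $\Psi$ carries it to $Pg\overline{h}mh=Pg^{Ph}$. The meridian-transport step you flag as the hard part is exactly the paper's one-line computation $(m_Qg)^{m_Qh}=m_Qg\overline{h}\overline{m_Q}mm_Qh=m_Qg\overline{h}mh$.
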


\bpr

$\Psi$ is $1$-$1$ by Lemma \ref{G-act-on-Q}(2), as well as onto, by Lemma \ref{G-act-on-Q}(1). To show that $\Psi$ is a quandle homomorphism, observe that
\begin{equation}
  (m_{Q}g)^{m_{Q}h}=m_{Q}g\overline{h}\overline{m_{Q}}mm_{Q}h=m_{Q}g\overline{h}mh.
\end{equation}
It follows that
\begin{equation}
  \Psi((m_{Q}g)^{m_{Q}h})=\Psi(m_{Q}g\overline{h}mh)=Pg\overline{h}mh=Pg^{Ph}=\Psi(m_Qg)^{\Psi(m_Qh)}.
\end{equation}
Therefore, $\Psi$ is a bijective quandle morphism.
\epr

\subsection{Remarks}\label{JR}
Joyce has also proved a theorem for classical knots
stating that the triple $(\pi_{1}(M-K), P, m)$ can be reconstructed from $Q(M,K)$. We generalize his result here.

Let $L$ be an $n$-knot in $F\times [0,1]$ in general position with respect to $\pi: F\times [0,1]\to F.$ Denote its diagram by $d.$
Let $G(d)$ be a group with the following presentation:
its generators are the faces of $d$. Let $D_0=\left\{x\in \pi(L), |\pi^{-1}(x)|=2\right\}$ be the pure double point set. Then for each connected component $\gamma$ of $D_0$ consider
the relation $x=yzy^{-1},$ where $x, y, z$ are the three faces meeting at $\gamma$ with $y$ being the overcrossing face, and with the normal vector to $y$ pointing toward $x$, as illustrated in Fig. \ref{Wirtr}.

\begin{figure}[htbp]
\begin{centering}
\includegraphics[scale=0.5]{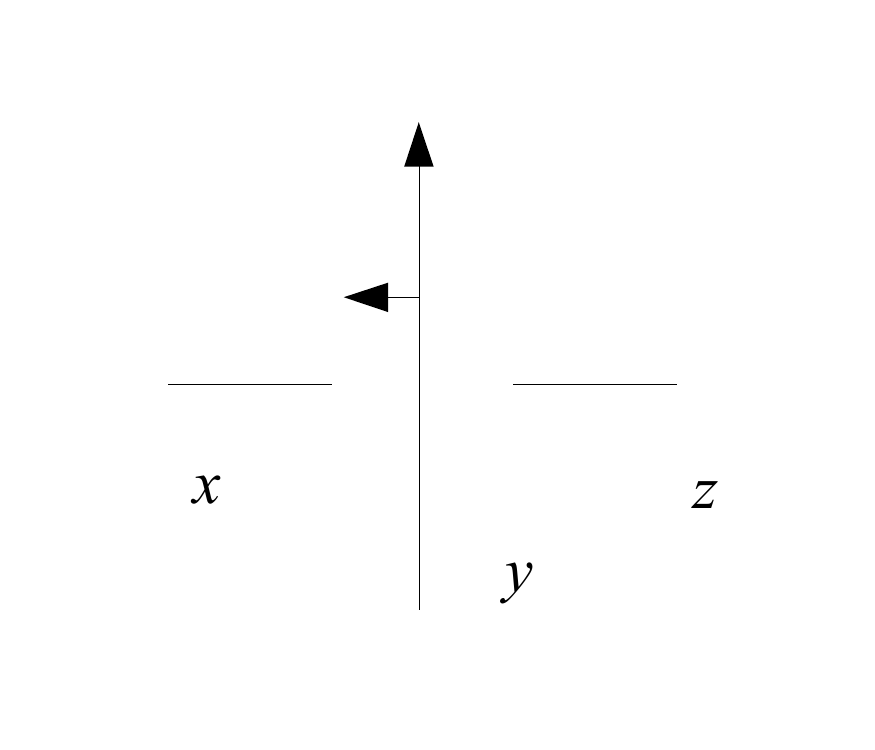}
\caption{A double point curve as shown yields relations $x=z^y$ on the group or quandle.}
\label{Wirtr}
\end{centering}
\end{figure}

Similarly we associate with $d$ a quandle $Q(d)$. Its generators are faces of $\pi(L).$ The relations between faces are $x=z^y,$ where $x,y,z$ are as above. Note that, in general, this quandle is not isomorphic to the group quandle of the knot group of $L$. For example, the knot group and its group quandle do not distinguish the square knot from the granny knot, but the knot quandles for these two knots are distinct.

\begin{theorem}\emph{(}\cite[Theorem 2]{KamaWirt}\emph{)}\label{KW}
For any knot $F^{n+1}$ simply connected, and $K\subset F^{n+1}\times I$ in general position with respect to $\pi$, with the knot diagram $d$, $G(d)$ and $Q(d)$ are isomorphic to the knot group and knot quandle of $K$.
\end{theorem}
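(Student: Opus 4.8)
The plan is to derive both isomorphisms by a van Kampen–type argument that generalizes the classical passage from a diagram to its Wirtinger presentation: the generators are read off the faces of $d$, the relations off the components of the pure double point set $D_0$, and the higher strata of the diagram (the self-crossing set $N$ and the branch set $B$) are shown to contribute nothing new. I would treat the group $G(d)$ first and then run the parallel argument for the quandle $Q(d)$ at the level of Joyce's fundamental quandle, since (as noted after Theorem \ref{main}) the group does not determine the quandle and so the quandle statement must be established intrinsically rather than deduced.

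For the group, the hypothesis that $F^{n+1}$ is simply connected is the engine making the faces suffice as generators: $\pi_1(F^{n+1}\times I)=\pi_1(F^{n+1})=1$, so $M=F^{n+1}\times I$ is simply connected. Fix a base point $b$ at a regular value of $\pi$ lying above every sheet of $K$ in the $I$-direction, and for each face $f$ let $m_f$ be the meridian that descends from $b$, links the topmost sheet over an interior point of $\pi(f)$, and returns. To see that $\{m_f\}$ generates $\pi_1(M-K,b)$, I would take an arbitrary loop $\alpha$, bound it by a disk $\Delta$ in the simply connected $M$, make $\Delta$ transverse to $K$, and write $\alpha$ (up to homotopy in $M-K$) as a product of meridians at the finitely many points of $\Delta\cap K$. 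Sliding each such puncture along the connected knot $K$ into the interior of a face exhibits each of these meridians as a product of the $m_f$, the conjugating factors acquired whenever the sliding path crosses a sheet being themselves face meridians.

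Next I would verify that the defining relations hold and then, crucially, that they suffice. Away from $B$ and $N$, the general-position conditions together with Theorem \ref{Rthm} give a neighborhood of a component $\gamma$ of $D_0$ diffeomorphic to a classical transverse crossing times $\mathbb{R}^{n-1}$; the local contribution to $\pi_1$ is therefore exactly the classical Wirtinger relation, and pushing the meridian of the undercrossing face across the overcrossing face $y$ yields $x=z^y$, with the direction fixed by requiring the normal to $y$ to point toward $x$ as in Fig. \ref{Wirtr}. Completeness is the step I expect to be hardest: assembling $M-N(K)$ up to homotopy as a $2$-complex prescribed by $d$, one must check that the higher strata impose no independent relations. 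Over a triple point of $N$ the local model is the complement of three sheets in general position crossed with a Euclidean factor, and the three double-curve relations meeting there are mutually dependent, so only a consistency condition already implied by the $D_0$-relations arises. The genuinely new phenomenon for $n\geq 2$ is a branch point $b_0\in B$, where $D_0$ terminates; here I would analyze the Whitney-umbrella local model, using general-position condition (3) (the two local components $U_0,U_1$ of $D-B$ having equal image under $\pi$), to show that the relation carried by the terminating double curve is a consequence of, or degenerates against, those carried by the neighboring unbroken strata. This local branch-point computation, rather than any global difficulty, is where the real work lies; granting it, van Kampen yields $G(d)\cong\pi_1(M-K)$.

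Finally, for $Q(d)$ I would repeat the argument for Joyce's fundamental quandle $Q(M,K)$, defining a map that sends each face generator to the class of the path descending from $b$ to a meridian of that face. The quandle operation $[q]^{[q']}=q\overline{q'}mq'$ converts the same local crossing picture directly into the relation $x=z^y$, so the map is a well-defined quandle homomorphism; generation and completeness are verified exactly as in the group case, with the triple-point and branch-point local analyses transported verbatim. Surjectivity follows from the same disk-transversality and sliding argument, and injectivity I would obtain by comparison with the group: under $G(d)\cong\pi_1(M-K)$ the image of $Q(d)$ matches the coset description $(P\backslash\pi_1(M-K,b),m)$ of Theorem \ref{main}, and the transitivity and stabilizer statements of Lemma \ref{G-act-on-Q} identify the fibres of the two presentations, giving $Q(d)\cong Q(M,K)$.
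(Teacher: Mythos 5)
Your argument for the group half is essentially the paper's: the paper proves Theorem \ref{KW} by citing Kamada's iterated Van Kampen argument from \cite{KamaWirt}, and your disk-transversality generation step (using simple connectivity of $F$), the local Wirtinger relation along components of $D_0$, and the check that triple points and branch points contribute only dependent relations are exactly the ingredients of that proof. The branch-point analysis, which you correctly single out as the real work, is asserted rather than carried out, but the paper likewise defers that to \cite{KamaWirt}, so I do not count it against you.

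The genuine gap is in your final step for the quandle. Having defined the homomorphism $Q(d)\to Q(M,K)$ and checked surjectivity, you propose to get injectivity ``by comparison with the group,'' identifying $Q(d)$ with the coset quandle $(P\backslash\pi_1(M-K,b),m)$ of Theorem \ref{main}. But Theorem \ref{main} identifies the \emph{geometric} quandle $Q(M,K)$ with $P\backslash\pi_1$; to identify the combinatorially presented $Q(d)$ with that same coset quandle you would need to show that the stabilizer $S$ of the base generator under the $Adconj(Q(d))$-action is all of $P$, not merely contained in it. The containment $S\subseteq P$ follows from equivariance of your surjection, but the reverse inclusion requires verifying combinatorially that every longitudinal element of $P$ --- a loop in $\partial N(K)$ covering a loop in $K$, read off as a word in face generators as it passes under sheets of the diagram --- fixes the base generator of $Q(d)$. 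For an $n$-knot of arbitrary topology $P$ is far from cyclic, this is real work (it is essentially Theorem \ref{hid} run in reverse), and you neither do it nor flag it. The repair is the route the paper actually takes and that you mention but then abandon: run the Van Kampen theorem for quandles (\cite[Theorem 13.1]{DJ}) directly on the same decomposition used for the group. That yields the presentation of $Q(M,K)$ intrinsically --- hence injectivity --- with no detour through $\pi_1$ and no need to compute the stabilizer.
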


This theorem is proved by an iterative application of the Van Kampen theorem. Note that in \cite{KamaWirt} only groups are considered, but the proof for quandles is analogous. The Van Kampen theorem holds for quandles, by \cite[Theorem 13.1]{DJ}.

\begin{theorem}\label{hid}
For any $n$-knot $K$ in $D^{n+2}$, the triple $(\pi_1(D^{n+2}-K), P, m)$ is uniquely determined (up to an isomorphism) by the knot quandle $Q(D^{n+2}, K)$.
\end{theorem}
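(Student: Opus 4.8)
The plan is to reconstruct the triple from $Q:=Q(D^{n+2},K)$ by realizing $\pi_1$ as the \emph{associated (adjoint) group} of the quandle and then extracting $P$ and $m$ from the tautological action of that group on $Q$. Recall that for any quandle $Q$ the associated group $\operatorname{As}(Q)$ is generated by symbols $e_q$, $q\in Q$, subject to $e_{q^r}=e_re_qe_r^{-1}$ (matching the paper's convention $a^b=bab^{-1}$). This is a functor $\mathbf{Qnd}\to\mathbf{Grp}$, left adjoint to the conjugation functor, so it is colimit preserving and hence carries a presented quandle to the group with the correspondingly presented relations. Since $\operatorname{As}(Q)$ and everything built from it are functorial in $Q$, they are determined up to isomorphism by the isomorphism class of $Q$.

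First I would identify $\operatorname{As}(Q)$ with the knot group. Writing $D^{n+2}\cong D^{n+1}\times I$ and putting $K$ in general position with respect to $\pi:D^{n+1}\times I\to D^{n+1}$ by Theorem \ref{Rthm}, Theorem \ref{KW} applies because $F=D^{n+1}$ is simply connected: the diagram $d$ presents $G(d)\cong\pi_1(D^{n+2}-K)$ on the faces with Wirtinger relations $x=yzy^{-1}$, and simultaneously presents $Q(d)\cong Q$ on the same generators with quandle relations $x=z^y$. Applying $\operatorname{As}$ to this quandle presentation produces exactly the Wirtinger group presentation, so $\operatorname{As}(Q)\cong G(d)\cong\pi_1(D^{n+2}-K)$, each generator $e_{\mathrm{face}}$ mapping to the corresponding meridian. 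Equivalently, $e_{Pg}\mapsto g^{-1}mg$ is a well-defined homomorphism $\operatorname{As}(Q)\to\pi_1$ which is onto because the meridian normally generates $\pi_1(D^{n+2}-K)$, and is injective by the matching of presentations.

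Next I would recover the meridian and peripheral subgroup. The right translations $S_r\colon q\mapsto q^r$ are quandle automorphisms satisfying $S_{r^s}=S_sS_rS_s^{-1}$ (a direct consequence of the self-distributivity axiom), so $e_r\mapsto S_r$ extends to an action of $\operatorname{As}(Q)$ on $Q$. Under the isomorphism above this is precisely the geometric action of $\pi_1$ on $Q$: with $Q\cong P\backslash\pi_1$ from Theorem \ref{main}, one checks $Ph\cdot e_{Pg}=Ph^{Pg}=P(hg^{-1}mg)$, which is right multiplication by $g^{-1}mg=\phi(e_{Pg})$. I would then fix any $q_0\in Q$ and set $m:=e_{q_0}\in\operatorname{As}(Q)$ and $P:=\operatorname{Stab}_{\operatorname{As}(Q)}(q_0)$. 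By Lemma \ref{G-act-on-Q} the action is transitive with $\operatorname{Stab}$ of the class of $q_0$ a peripheral subgroup, while $e_{q_0}$ maps to the meridian carried by $q_0$, which is central in that stabilizer; taking $q_0=m_Q$ gives exactly $(P,m)$, and other choices differ only by conjugation, matching the fact that $P$ and $m$ are defined only up to conjugacy. Hence $(\operatorname{As}(Q),\operatorname{Stab}(q_0),e_{q_0})\cong(\pi_1(D^{n+2}-K),P,m)$.

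The main obstacle is the identification $\operatorname{As}(Q)\cong\pi_1$, specifically the injectivity of the natural surjection $\operatorname{As}(Q)\to\pi_1$: surjectivity is immediate since meridians normally generate the group, but injectivity asserts that the quandle relations already impose every relation of the knot group. This is exactly what Theorem \ref{KW} secures, by matching the Wirtinger presentation of $\pi_1$ with the image under $\operatorname{As}$ of the quandle presentation of $Q$. The only points that need care are presenting $D^{n+2}$ as $F\times I$ with $F$ simply connected so that Theorem \ref{KW} is applicable, and verifying that $\operatorname{As}$ genuinely sends a quandle presentation to the corresponding group presentation, which follows from its being a left adjoint.
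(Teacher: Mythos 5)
Your proposal is correct and follows essentially the same route as the paper: your $\operatorname{As}(Q)$ is exactly the paper's $Adconj(Q)$, both identify it with $\pi_1(D^{n+2}-K)$ by matching the Wirtinger presentation via Theorem \ref{KW}, and both recover $m$ as the image of a generator and $P$ as the stabilizer of a quandle element under the tautological action of the group on $Q$. Your algebraic verification that $e_r\mapsto S_r$ extends to a group action (via $S_{r^s}=S_sS_rS_s^{-1}$) is a slightly cleaner substitute for the paper's homotopy-based well-definedness check, but it is the same argument in substance.
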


\bpr
For any quandle $Q$, we may define the group $Adconj(Q)$ to be the group generated by the elements of $Q$, whose relations are exactly the quandle relations, with the quandle operation interpreted as conjugation in the group. By Theorem \ref{KW}, $Q(D^{n+2}, K)$ has a presentation whose generators correspond to the faces of a knot diagram of $K$, and whose relations are the Wirtinger relations corresponding to each meeting of faces in a double point set. It is then easy to check that $Adconj(Q(D^{n+2}, K))$ has a presentation as a group whose generators correspond to the faces of a knot diagram of $K$, with relations being exactly the Wirtinger relations from each double point component. It follows that $Adconj(Q)$ is canonically isomorphic to $\pi_{1}(D^{n+2}-K)$ (up to a choice of a basepoint). There is also a canonical map from the elements of $Q$ to the elements of $Adconj(Q)$, sending a word in the generators of $Q$ to the corresponding word in the generators of $Adconj(Q)$, with the quandle operation again interpreted as group conjugation.
For any $q\in Q$ denote the corresponding element in $Adconj(Q)$ by $\hat{q}$.
Then $m\in Adconj(Q(D^{n+2}, K))$ can be assumed to be any of the generators corresponding to a face in the knot diagram. In general,
$Adconj(Q(D^{n+2}, K))$ acts on $Q(D^{n+2}, K)$ by $q\hat{q_{0}}...\hat{q_{n}}=(...(q^{q_{0}})^{q_{1}}...)^{q_{n}}$. To see that this action is well-defined, note that if an element
of $Adconj(Q(D^{n+2}, K))$ has two presentations, $\hat{q_{0}}...\hat{q_{n}}$ and $\hat{p_{0}}...\hat{p_{m}}$,
this implies that those products of Wirtinger generators are homotopic rel basepoints, and
this homotopy passes to a homotopy of $(...(q^{q_{0}})^{q_{1}}...)^{q_{n}}$ to $(...(q^{p_{0}})^{p_{1}}...)^{p_{m}}$.
It is then straightforward to check that this action of $Adconj(Q(D^{n+2}, K))$ is just the geometrically defined action of $\pi_{1}(D^{n+2}-K)$ on $Q(D^{n+2}, K)$.
Then $P$ will be the stabilizer of $m\in Q(D^{n+2}, K)$ under this action, and so we have constructed
$(\pi_1(D^{n+2}-K), P, m)$. Note that $m$ is unique (up to conjugation) and $P$ is determined by $m$. Hence, this triple is unique up to an isomorphism.
\epr

Quandle invariants are particularly simple for knotted $n$-spheres, $n\geq 2$, since their peripheral groups are cyclic and, hence $m$, is unique. (The choice between $m$ and $m^{-1}$ is determined by the orientations of $M$ and of $K$). It follows that for classical knots, the quandle does not contain much more information than the fundamental group. On the other hand, when $\pi_1(\p N(K))$ is a more complicated subgroup of $\pi_1(M-N(K))$, the knot quandle may be able to capture more information than $\pi_1(M-N(K))$.

Eisermann, \cite{ME}, has in fact shown that the quandle cocycle invariants of classical knots are a specialization of certain colorings of their fundamental
groups. His construction makes use of the full peripheral structure of the classical knot
(that is, the longitude and the meridian),
and thus does not generalize immediately to higher dimensions. However, in light
of our result here, we pose the question of whether a similar construction might
not be possible in higher dimensions.

%
\section{Geometric Definition of Virtual Links in Any Dimension}
\label{s-geom-def-virt}
%

Kauffman defined virtual links as a combinatorial generalizations of classical link diagrams or, alternatively, as the Gauss codes of classical links diagrams. However, there is a geometric definition of virtual links as well. We will follow essentially the treatment given in \cite{KamaKama} and \cite{StableEq} for this approach. Note that while Carter et al. work with link diagrams on surfaces, we work with links in thickened surfaces; By Theorem \ref{Rthm} and the remarks below it, these approaches are equivalent. As we will see, the definition of \cite{KamaKama} and \cite{StableEq} can be extended to higher dimensions in a purely geometric manner, without reference to any combinatorial constructions.

Let $V_{n}$ be the set of pairs $(F\times [0,1], L)$ where $F$ is a compact $(n+1)$-manifold (with possibly non-empty boundary), $F\times [0,1]$ is given the structure of a trivial $[0,1]$-bundle over $F$, and where $L$ is an $n$-manifold embedded in the interior of $F\times [0,1]$ such that $L$ meets every component of $F\times [0,1]$. The $[0,1]$-bundle structure on $F\times [0,1]$ will always be implied to be the canonical bundle coming from projection $\pi:F\times [0,1]\rightarrow F$ when we refer to elements of $V_n$ in this way. For convenience we will either simply write $(F\times [0,1], L)$, with the bundle structure implied, or, if using the notation $(M, L)\in V_n$, we will assume $M$ is identified with some trivial bundle $F\times [0,1]$, with the bundle structure coming from projection onto the first factor of the Cartesian product.
(Our theory will not be affected by the fact that $F\times [0,1]$ may have corners.) 
Define a relation $\sim$ on $V_{n}$ by the condition $(F_{1}\times [0,1], L_{1})\sim(F_{2}\times [0,1], L_{2})$ iff there exists an embedding $f:F_1\rightarrow F_2$ such that $f\times id_{[0,1]}(L_1)=L_2$. We define an equivalence relation $\cong$ on $V_n$, $(F_{1}\times [0,1], L_{1})\cong (F_{2}\times [0,1], L_{2})$, to be the equivalence relation generated by $\sim$ together with smooth isotopy of the link. When $(F_{1}\times [0,1], L_{1}) \cong (F_{2}\times [0,1], L_{2})$, we say they are \emph{virtually equivalent}.


Consider first the case $n=1$. Note that any link diagram in a surface $F$ defines a Gauss code and, consequently, a virtual link. 
Since this construction is invariant under Reidemeister moves, it descends to a map $f: V_1\to$ \{Kauffman's virtual links\}. 
Furthermore, since $f(F_1\times [0,1],L_1)=f(F_2\times [0,1],L_2)$ for $(F_1\times [0,1],L_1)\cong (F_2\times [0,1],L_2),$
it factors to $f: V_1/\cong\ \to \{\text{virtual links}\}.$

\begin{definition}
An element $(F\times [0,1], L)$ of $V_{n}$, with $L$ in general position with respect to $\pi$, is an \emph{abstract $n$-link} iff $\pi(L)$ is a deformation retract of $F\times [0,1]$.
\end{definition}

We now wish to define an inverse map to $f$. Given a virtual link diagram $C$, we may construct an abstract link $(F\times [0,1],L)\in V_{1}$ using the following construction. Let $F\times [0,1]'$ be a neighborhood of the graph $C$ in $S^{2}$. Replace the virtual crossings by changing the thickened ``+'' shape in a neighborhood of each virtual crossing into two strips with an arc along each of them, as shown in Fig. \ref{arch}. The result is a $2$-manifold $F$ with a link diagram on it, which therefore defines a link in $F\times [0,1]$. Let us denote its equivalence class in $V_1/\cong$ by $g(C).$ If two virtual link diagrams $C$ and $C'$ are equivalent under the virtual Reidemeister moves, then it is easily checked that $g(C)\cong g(C')$. We may therefore regard the map $g$ as a map from equivalence classes of virtual link diagrams to equivalence classes of virtual $1$-links.

\begin{figure}
		\centering
			\includegraphics[scale=0.5]{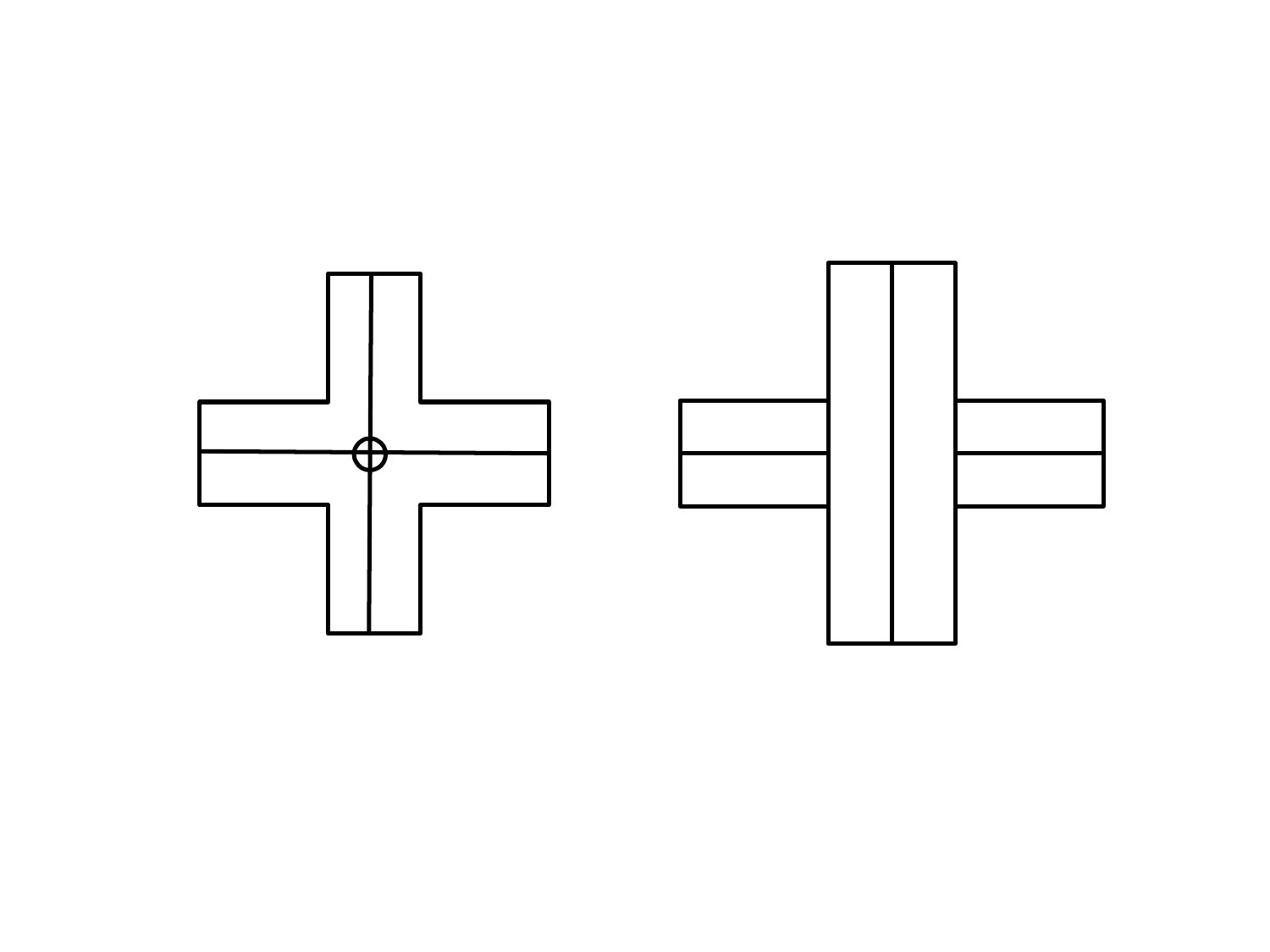}
		\caption{At each virtual crossing in $F\times [0,1]'$, we modify the disk into two disks with an arc crossing each of them.}
		\label{arch}
\end{figure}

The relation between $V_{1}/\cong$ and Kauffman's virtual link is shown by the following theorem, which is proved by a fairly easy verification.

\begin{theorem}\emph{(}\cite{KamaKama, StableEq}\emph{)}
$f$ and $g$ are bijective inverses between the set of virtual link diagrams modulo the virtual Reidemeister moves and $V_{1}/\cong$.
\end{theorem}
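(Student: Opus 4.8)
The plan is to verify that the two composites $f\circ g$ and $g\circ f$ are the identity, after first reducing every class in $V_1/\cong$ to a canonical \emph{abstract} representative. First I would observe that every class contains such a representative: given $(F\times[0,1],L)$, let $N$ be a closed regular neighborhood of the graph $\pi(L)$ in $F$. Since $\pi(L)$ is a deformation retract of $N$, the pair $(N\times[0,1],L)$ is an abstract link, and the inclusion $N\hookrightarrow F$ witnesses $(N\times[0,1],L)\sim(F\times[0,1],L)$, so the two are $\cong$. Thus it suffices to analyze $f$ on abstract links and $g$ on virtual link diagrams.

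For $f\circ g=\mathrm{id}$: given a virtual link diagram $C$, the abstract link $g(C)$ carries a link diagram on its surface $F$ whose classical crossings are exactly the classical crossings of $C$. Indeed, the construction of Fig.~\ref{arch} splits each virtual crossing into two disjoint strips, destroying it, while leaving the classical crossings and the incidences of the arcs untouched. Reading off the Gauss code of $g(C)$ therefore reproduces the Gauss code of $C$ verbatim: same arcs, same over/under data, same order of crossings along the overarcs. Under the standard identification of Gauss codes modulo Reidemeister moves with virtual link diagrams modulo the virtual Reidemeister moves, this gives $f(g(C))=[C]$.

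For $g\circ f=\mathrm{id}$: by the reduction above I may take an abstract representative $(F\times[0,1],L)$, so that $F$ is a regular neighborhood of the $4$-valent graph $\pi(L)$. The key point is that such a regular neighborhood, viewed as a ribbon (fat-graph) surface, is determined up to homeomorphism by the cyclic order of the four edges around each crossing vertex, and at a crossing this cyclic order is forced by the over/under and sign data together with the strand orientations (the over-strand and under-strand each pass straight through, so the four edges alternate). Hence the fat-graph data recorded by the Gauss code $f(L)$ determines both $F$ and the placement of $L$ in $F\times[0,1]$ up to the moves generating $\cong$. I would then check that the surface produced by $g$ from the virtual diagram associated to $f(L)$ is precisely this regular neighborhood: the strips that replace the virtual crossings reconstruct exactly the fat-graph attachments that were lost when the abstract diagram was planarized into a virtual diagram. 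This yields $g(f([L]))=[L]$, and combined with the previous paragraph establishes that $f$ and $g$ are mutually inverse bijections.

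The main obstacle is the last step, namely showing that the Gauss code determines the abstract link up to $\cong$, equivalently that two abstract links with the same Gauss code are $\cong$-equivalent. This is where the ribbon/fat-graph uniqueness enters, and where one must check that the relation $\sim$ together with isotopy is exactly strong enough to absorb the difference between two regular neighborhoods of the same embedded graph, and to discard any superfluous genus or any component of $F$ disjoint from $L$; the latter is in fact excluded by the standing requirement that $L$ meet every component of $F\times[0,1]$. The remaining well-definedness checks, that $f$ is invariant under $\cong$ and $g$ under the virtual Reidemeister moves, are the routine verifications already indicated in the text.
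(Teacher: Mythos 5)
Your argument is correct and is essentially the one the paper relies on: the paper gives no proof of this theorem, deferring to \cite{KamaKama, StableEq}, where the verification proceeds exactly as you describe --- reduce each class in $V_1/\cong$ to its abstract (regular-neighborhood) representative, note that the oriented fat-graph structure of that neighborhood is forced by the crossing data in the Gauss code, and observe that the strip construction at virtual crossings reconstructs precisely that fat graph. The one ingredient you invoke without proof, the identification of Gauss codes modulo Reidemeister moves with virtual diagrams modulo the virtual Reidemeister moves (the detour-move argument), is likewise taken as given in the paper, so this introduces no gap relative to the paper's own treatment.
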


Therefore, $V_{1}/\cong$ gives a completely geometric definition for Kauffman's virtual links. It has the particular advantage that it does not explicitly rely upon diagrams or Gauss codes for its definition, which makes it possible to generalize to higher dimensions without first needing to establish a higher-dimensional analog for diagrams or Gauss codes.

This equivalence of $V_{1}/\!\!\cong$ to Kauffman's virtual links makes it geometrically natural to simply define virtual $n$-links as elements of $V_{n}/\cong$.

\begin{definition}
A \emph{virtual $n$-link} is an element of $V_{n}/\cong$. We denote the equivalence class of $(F\times [0,1],L)$ by $[F\times [0,1],L].$
\end{definition}

Elements $[F\times [0,1],L]\in V_{n}/\cong$ such that $L$ has exactly one component will be termed \emph{virtual $n$-knots}. The set of virtual $n$-knots will be denoted by $VK_{n}$. Furthermore, it may easily be seen that for $(F\times [0,1], L), (F'\times [0,1], L')\in V_n$, $(F\times [0,1], L)\cong (F'\times [0,1], L')$ only if $L$ and $L'$ are diffeomorphic.

\begin{definition}
We will denote the set of virtual $n$-links $[F\times [0,1], L]$ with a fixed $L$ by $V(L)$.
\end{definition}

Our definition of virtual links may easily be generalized to virtual tangles. Let $VT_n$ consist of pairs $(F\times [0,1], L)$, for some $(n+1)$-manifold $F$, and $L$ is an $n$-manifold properly embedded in $F\times (0,1)$. Note that this implies that $\partial L$ is embedded in $\partial F\times (0,1)$. Let $(F_{1}\times [0,1], L_{1})\sim(F_{2}\times [0,1], L_{2})$ iff there exists an embedding $f:F_{1}\rightarrow F_{2}$ such that $(f\times id_{[0,1]})(L_{1})=L_2$. We define $\cong$ to be the equivalence relation generated by $\sim$ and by smooth isotopy of $L$, keeping the boundary of $L$ embedded in $\partial F\times (0,1)$.

\begin{definition}
A \emph{virtual $n$-tangle} is an element of $VT_{n}/\cong$.
\end{definition}

\begin{definition}
A \emph{realizable $n$-link} is a virtual $n$-link whose equivalence class in $V_{n}$ includes a pair $(S^{n+1}\times [0,1], L)$.
\end{definition}

Equivalently, a realizable $n$-link is a virtual $n$-link which is virtually equivalent to some pair $(D^{n+1}\times [0,1], L)$. We will use either characterization interchangeably.

It is known that not every element of $V_{1}$ is realizable. For higher $n$, the $L$ in $(F\times [0,1], L)$ may have the diffeomorphism type of a manifold that does not embed into $S^{n+1}\times [0,1]$, in which case this pair cannot be realizable. Some simple examples of this may be found in odd-dimensional real projective spaces; for example, $\mathbb{R}P^5$ cannot be embedded in $\mathbb{R}^7$, \cite{Davis}.

\begin{question}
For a fixed number $n$, are there non-realizable elements $(F\times [0,1], L)$ of $V_{n}$ where $L$ is a manifold that embeds in $S^{n+1}\times [0,1]$?
\end{question}

%

\section{Homotopy Invariants for Virtual $n$-Links}
\label{s-homotopy-inv}
%

In this section we define some invariants of  virtual $n$-links. If these invariants are extensions of classical invariants, i.e. they agree with some classical invariant of $n$-links whenever $(F\times [0,1], L)$ is realizable, then they also can be used to study the relationship between classical $n$-links and virtual $n$-links. We will begin with the following

\begin{remark}\label{abstractknot}
Any element $(F\times [0,1], L)\in V_{n}$ is virtually equivalent to an abstract $n$-link $(F'\times [0,1], L)$ where $F'$ is a tubular neighborhood of  $\pi(L)$ in $F$, and $\pi$ is the canonical projection $\pi:F\times [0,1]\rightarrow F$.
\end{remark}


For a pair $(F\times [0,1], L)\in V_{n}$ or $(F\times [0,1], L)\in VT_n$, let $h(F)$ denote the space $(F\times [0,1])/r$, where $r$ is the relation  $(x, 1) r (x', 1)$ for all $x, x'\in F.$
Note that this is equivalent to gluing a cone over $F$ to $F\times\{ 1 \}$.
We can then use the pair of spaces $(h(F), L)$ to study virtual links, although this pair is not an element of $V_n$.

\begin{theorem}
The homotopy type of $h(F)-L$ is invariant under virtual equivalence.\label{hinv}
\end{theorem}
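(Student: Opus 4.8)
The plan is to use that the equivalence relation $\cong$ on $V_n$ is, by definition, generated by the relation $\sim$ (embedding one thickened surface into another so as to carry one link to the other) together with smooth isotopy of the link. It therefore suffices to prove that the homotopy type of $h(F)-L$ is preserved by each of these two elementary moves; invariance under an arbitrary chain then follows by composition.

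First I would dispose of the isotopy move. If $L_0$ and $L_1$ are smoothly isotopic inside $\mathrm{int}(F\times[0,1])$, then the track of the isotopy is a compact subset of $\mathrm{int}(F)\times(0,1)$, so the isotopy extension theorem provides an ambient isotopy $\Phi_t$ of $F\times[0,1]$ with $\Phi_0=\mathrm{id}$, $\Phi_1(L_0)=L_1$, and $\Phi_t$ supported in a compact subset of the interior. In particular each $\Phi_t$ fixes $F\times\{1\}$ pointwise, so $\Phi_1$ respects the identification defining $h(F)$ and descends to a self-homeomorphism of $h(F)$ carrying $L_0$ onto $L_1$. This restricts to a homeomorphism $h(F)-L_0\to h(F)-L_1$, which is in particular a homotopy equivalence.

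The main step is the move $\sim$. Suppose $f:F_1\hookrightarrow F_2$ is an embedding with $(f\times\mathrm{id})(L_1)=L_2$. Since $f\times\mathrm{id}$ carries $F_1\times\{1\}$ into $F_2\times\{1\}$, it descends to a map $h(f):h(F_1)\to h(F_2)$ which is a homeomorphism onto the subcone $h(f(F_1))$ and carries $L_1$ to $L_2$; hence $h(F_1)-L_1$ is homeomorphic to $h(f(F_1))-L_2$, and it is enough to show the inclusion $h(f(F_1))-L_2\hookrightarrow h(F_2)-L_2$ is a homotopy equivalence. Writing $W=\overline{F_2-f(F_1)}$ and $\partial_0 W=f(F_1)\cap W$, I would record the pushout decomposition $h(F_2)-L_2=(h(f(F_1))-L_2)\cup_{h(\partial_0 W)}h(W)$, valid because $L_2$ lies in $f(F_1)\times(0,1)$ and so misses $W\times[0,1]$ entirely. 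Now $h(W)$ and $h(\partial_0 W)$ are cones, hence contractible, so the inclusion $h(\partial_0 W)\hookrightarrow h(W)$ is a homotopy equivalence; it is also a cofibration, since $\partial_0 W\hookrightarrow W$ is a cofibration (a boundary collar) and coning preserves cofibrations. By the gluing lemma for homotopy equivalences — equivalently, because a trivial cofibration is preserved under cobase change — the induced map on pushouts $h(f(F_1))-L_2\hookrightarrow h(F_2)-L_2$ is a homotopy equivalence.

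The crux, and the reason the construction $h(F)$ collapses the top $F\times\{1\}$, lies in this last step: without coning, $(F_2\times[0,1])-L_2$ would retract onto $(f(F_1)\times[0,1])-L_2$ only if $W$ deformation retracted onto $\partial_0 W$, which fails for a general codimension-$0$ submanifold $W$. The cone apex supplies exactly the room needed to absorb $W$, making $h(W)$ contractible relative to $h(\partial_0 W)$, so I expect the verification that the relevant inclusions are cofibrations to be the only delicate point. Two smaller items I would check are that the isotopy in the first move can indeed be taken with support in the interior (so that it fixes $F\times\{1\}$), and that $W$ has no component disjoint from $f(F_1)$; the latter follows from the standing requirement that $L$ meet every component of $F\times[0,1]$, since $L_2\subset f(F_1)\times[0,1]$ forces every component of $F_2$ to meet $f(F_1)$.
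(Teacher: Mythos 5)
Your proposal is correct, and it reaches the conclusion by a genuinely different route from the paper. The paper's proof treats the isotopy case as immediate and then, for an inclusion $F'\subset F$, writes down an explicit deformation retraction $H:h(F)-L\to h(F')-L$ which is the identity on $h(F')$ and pushes $(F-F')\times[0,1]$ up onto $(F-F')\times\{1\}\cup\partial F'\times[0,1]$, i.e.\ it slides the excess region into the cone point and then down the collar of $\partial F'$; the geometric content is a single picture plus the observation that this map is homotopic to the identity. You instead decompose $h(F_2)-L_2$ as a pushout of $h(f(F_1))-L_2$ and the cone $h(W)$ over the cone $h(\partial_0 W)$, observe that both cones are contractible and that $h(\partial_0 W)\hookrightarrow h(W)$ is a (trivial) cofibration, and conclude by cobase change. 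Both arguments hinge on the same geometric fact --- coning off $F\times\{1\}$ makes the complementary region collapsible --- but yours packages it homotopy-theoretically while the paper's realizes it as a concrete retraction; note that since your inclusion is a trivial closed cofibration it is in fact a strong deformation retract inclusion, so you recover the paper's stronger conclusion as well. Your version trades the paper's picture-verification for the (routine, but worth stating) checks that $\partial_0 W\hookrightarrow W$ is a collared cofibration and that coning preserves this, and it buys a cleaner treatment of two points the paper elides: the isotopy case (via isotopy extension supported in the interior, so that the ambient isotopy descends to $h(F)$) and the possibility of components of $W$ disjoint from $f(F_1)$, which you correctly rule out using the requirement that $L$ meet every component of $F\times[0,1]$.
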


\bpr It is immediate that isotopy of $L$ in $F\times [0,1]$ will not change the homotopy type of $h(F)-L$.
Let $F'\subset F$. We wish to show that $h(F)-L$ has the same homotopy type as $h(F')-L$. We will define a deformation retraction $H: h(F)-L\to h(F')-L$
defined as follows: $H$ is the identity on $h(F')$ and it is given by a projection
of $(F-F')\times [0,1]$ onto $(F-F')\times \{1\}\cup \p F'\times I$ as depicted in Fig. \ref{vpush}. Hence, $H$ maps $h(F)-L$ to $h(F')-L.$ It is easy to see that this map is homotopic to the identity and, hence, as a deformation retraction, it is a homotopy equivalence between $h(F)-L$ and $h(F')-L.$
\epr

\begin{figure}
		\centering
			\includegraphics[scale=0.5]{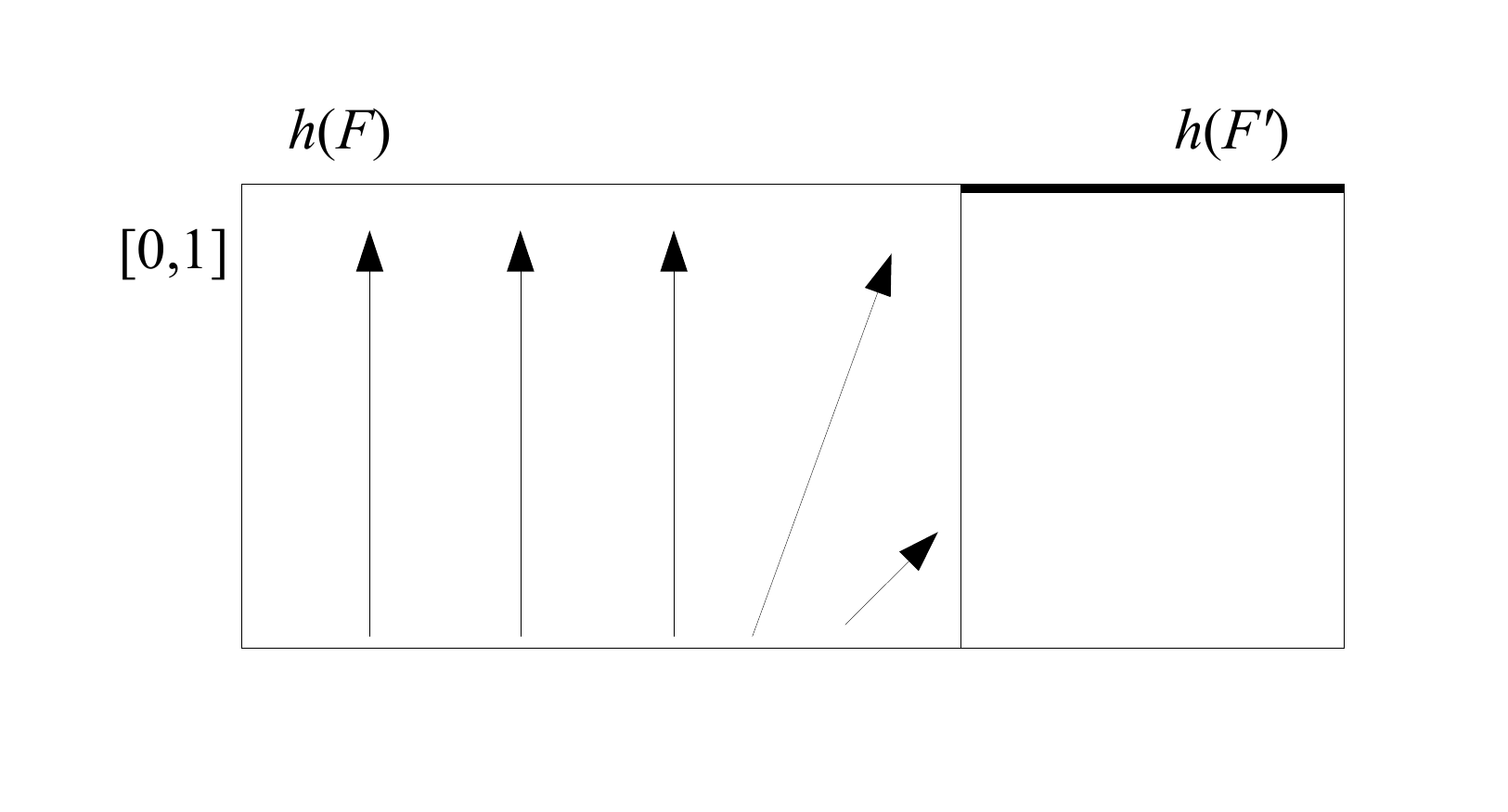}
		\caption{The map $r$ projects points that lie outside a neighborhood of $F'$ straight up, while points in this neighborhood are pushed up and toward the $\partial F'\times[0,1]$.}
		\label{vpush}
\end{figure}

\begin{remark}
For $L\subset D^{n+1}\times [0,1]$, $h(D^{n+1})-L$ is homotopy equivalent to $(D^{n+1}\times [0,1])-L$.
\end{remark}

It follows that any homotopy invariants of links in $D^{n+2}$ extend to invariants of virtual links.

\begin{remark}
We may also consider $F\times [0,1]/r'-L$, where $r'$ is the relation  $(x, 0) r (x', 0)$ for all $x, x'\in F.$ This space is homotopy equivalent to $h(D^{n+1})-L$, for links in $D^{n+2}$, but in general it is not clear whether this will be the case.
\end{remark}

\begin{definition}
(1) We define the link group of a virtual link $[F\times [0,1], L]$ to be $\pi_1(h(F)-L)$.\\
(2) The image of $\pi_1(\p N(L))$ in $\pi_1(h(F)-N(L))$ is called the peripheral subgroup of the virtual link group.\\
(3) For a knot $K\subset F\times [0,1]$, consider a base point $b\in h(F)-K$ which lies in the boundary of $N(K).$ That base point defines the meridian $m\in \pi_1(h(F)-K)$ given by the boundary of a disk in $N(K)$ normal to $K$, passing through $b$. Note that a meridian $m$ is defined up to conjugation in $\pi_1(h(F)-K)$ if a base point is not specified).\\
(4) An oriented closed curve on $l\subset \p N(K)$ passing though $b$ such that the algebraic intersection number between $\pi(l)$ and $\pi(K)$ vanishes in $F$ is called a longitude.
\end{definition}

Note that for virtual $1$-knots a longitude as above is unique and that $m,l$ commute. Furthermore, $m,l$ are uniquely defined up to mutual conjugation by an element of $\pi_1(h(F)-K)$, if the base point is not specified. In other words, $m,l$
define a peripheral structure on $\pi_1(h(F)-K)$.

Observe, however, that higher-dimensional knots may have many longitudes (even if $b$ is specified), which are not conjugate, depending on the topology of the knotted space.




\begin{theorem}
Suppose $(S^{2}\times [0,1], K)\cong (S^{2}\times [0,1], K')$ as elements of $VK_{1}$. Then $K$ and $K'$ are ambient isotopic in $S^{2}\times [0,1]$. \label{ckvk}
\end{theorem}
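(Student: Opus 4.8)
The plan is to reduce the statement to the classical fact that a knot in $S^3$ is determined by its complement, exploiting the special feature that $S^2$ is the genus-minimal surface. First I would record two structural facts about the setting. Since $K$ is a $1$-knot, after putting it in general position (Theorem \ref{Rthm}) its projection $\pi(K)\subset S^2$ is a $4$-valent graph, hence is disjoint from some disk of $S^2$; thus $K$ lies in a ball inside $S^2\times[0,1]$. Consequently isotopy classes of knots in $S^2\times[0,1]$ correspond to classical knots, and ambient isotopy in $S^2\times[0,1]$ agrees with classical isotopy in $S^3$. Second, collapsing $S^2\times\{1\}$ in the construction of $h(S^2)$ yields a $3$-ball, so $h(S^2)-K$ is homeomorphic to the classical complement $S^3-K$; therefore the virtual link group $\pi_1(h(S^2)-K)$ together with its peripheral subgroup coincides with the classical knot group and its classical peripheral structure.

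Next I would upgrade Theorem \ref{hinv} from a homotopy-type statement to one about peripheral structure. The relation $\cong$ is generated by ambient isotopies of the link and by subsurface moves $(F'\times[0,1],L)\sim(F\times[0,1],L)$ coming from embeddings $f\colon F'\hookrightarrow F$. Isotopies visibly carry meridians and the peripheral subgroup to meridians and peripheral subgroups. For a subsurface move the key observation is that the deformation retraction built in the proof of Theorem \ref{hinv} is supported in $(F-F')\times[0,1]$, hence is the identity on a neighborhood $N(L)$ of the link (note $L\subset f(F')\times[0,1]$, so $N(L)$ sits identically inside both $h(F')-L$ and $h(F)-L$). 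Thus the induced isomorphism on $\pi_1$ restricts to the identity on $\pi_1(\partial N(L))$, preserving both the peripheral subgroup and the meridian. Composing the steps of a chain realizing $(S^2\times[0,1],K)\cong(S^2\times[0,1],K')$ then yields an isomorphism $\pi_1(h(S^2)-K)\to\pi_1(h(S^2)-K')$ taking peripheral subgroup to peripheral subgroup and meridian to meridian.

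By the first paragraph this is an isomorphism of classical knot groups respecting the classical peripheral structure. At this point I would invoke the machinery already cited in the paper: by Waldhausen's theorem \cite{Wald} a knot complement is a Haken manifold determined up to boundary-respecting homeomorphism by its fundamental group with peripheral structure, so $S^3-K$ and $S^3-K'$ are homeomorphic by a map matching the peripheral data; and by Gordon--Luecke \cite{GL} the knot is determined by its complement, so $K$ and $K'$ are equivalent classical knots (the preservation of the meridian rules out mirroring). Translating back through the correspondence of the first paragraph produces an ambient isotopy of $S^2\times[0,1]$ carrying $K$ to $K'$.

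The main obstacle is the middle step: ensuring that virtual equivalence transports the \emph{full} peripheral structure and not merely the homotopy type of the complement, since the group alone does not determine a classical knot (e.g.\ it fails to separate the square and granny knots). This is precisely what forces the observation that the retractions of Theorem \ref{hinv} are supported away from $N(L)$, so that a neighborhood of the knot, and with it the peripheral data, is carried along rigidly. An alternative route, bypassing Gordon--Luecke, would note that $S^2$ realizes the genus-$0$ (minimal) representative of the virtual knot and appeal to uniqueness of minimal-genus representatives (Kuperberg \cite{Kuper}); the analytic heart of that approach, uniqueness of destabilization, is essentially as deep as the complement argument above.
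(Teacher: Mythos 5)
Your argument is essentially the paper's own: show that the virtual knot group together with its peripheral data (meridian and longitude) is preserved under virtual equivalence, identify knots in $S^2\times[0,1]$ with classical knots in $S^3$, and then invoke Waldhausen and Gordon--Luecke to conclude. The paper merely asserts the preservation of the peripheral structure, whereas you correctly supply the missing justification (the deformation retraction of Theorem \ref{hinv} is supported away from $N(L)$), so your write-up is a faithful, somewhat more detailed version of the same proof.
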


\bpr The knot group, meridian, and longitude of $K\subset h(F)$ are preserved under virtual equivalences. Since by a theorem of Waldhausen, \cite{Wald}, and a result of Gordon and Luecke, \cite{GL}, these form a complete invariant for knots in $S^{3}$, and since knots in $S^{3}$ are equivalent to knots in $S^{2}\times [0,1]$, the theorem follows.
\epr

Joyce's geometric definition of the fundamental quandle for a codimension-$2$ link, \cite{DJ}, recalled in Section \ref{geoqu}, applies to links in spaces $h(F)$ despite the fact that $h(F)$ is not a manifold.  
In Joyce's construction, we consider paths $\gamma$ from a basepoint $x_0\in F\times [0,1]-L$ to $\partial N(L)$, up to homotopy through such paths. Note that $\partial N(K)\cong K\times S^1$, a circle bundle over $L$.  Given two such paths, $\gamma$ and $\gamma'$, we define $\gamma^{\gamma'}$ to be the path $\gamma\circ\overline{\gamma'}\circ m\circ \gamma'$, where $m$ is the path circling the fiber of $\partial N(L)$ at the terminal point of $\gamma$, as illustrated in Fig. \ref{qop}. The proof that this defines a quandle, and the theorems relating the quandle of $L\subset h(F)$ to the fundamental group of $h(F)-L$ and its peripheral structure when $L$ is an $n$-knot, follow without change from those given in Section \ref{geoqu}.

\begin{definition}
We define the link quandle of a virtual link $[F\times [0,1], L]$ to be the quandle of $L\subset h(F)$, $Q(h(F), L)$.
\end{definition}

\subsection{Computation of the Knot Group and Quandle}

Although we have given a purely geometric definition of the group and the quandle of virtual $n$-links, by using the pair of spaces $(h(F), L)$, it is often convenient to compute these invariants by using link diagrams on $F$. 
We will give an algorithm for doing so, and show that the result does not depend upon performing an isotopy of $L$ or on changing $(F\times [0,1],L)$ by virtual equivalence. The algorithm given here and its independence upon isotopy is discussed in \cite{PR}, however, in that paper, changes to $(F\times [0,1],L)$ by virtual equivalence are not considered.

Let $[F\times [0,1], L]\in V_n/\cong$. We may take a representative $(F\times [0,1], L)$ for this equivalence class such that $L$ projects to a link diagram $d$ on $F\times \{0\}$. Recall that in Section \ref{JR} we constructed the group $G(d)$ and the quandle $Q(d)$ from the faces of diagram $d$ of $L$ and from its double point set.

\begin{theorem}
$G(d)\cong \pi_{1}(h(F) - L)$ and $Q(d)\cong Q(h(F), L)$ for a virtual link $(F\times [0,1], L)$, with $L$ in general position with respect to $\pi$, represented by a diagram $d\subset F.$
\end{theorem}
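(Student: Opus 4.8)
The plan is to compute $\pi_1(h(F)-L)$ directly by an iterated Van Kampen argument, exactly paralleling the proof of Theorem \ref{KW}, with a single structural change that does all of the new work. The key observation is that $h(F)$ is the cone $CF$ on $F$: collapsing $F\times\{1\}$ to a point turns $F\times[0,1]$ into a contractible space. I would therefore take the cone point $\ast$ as the basepoint. Each face of the diagram $d$ lifts to a sheet of $L$, and I assign to it the meridian loop that runs from $\ast$ straight down a vertical fibre to a point just off that sheet, encircles the sheet once, and returns up the fibre to $\ast$; because all fibres terminate at the single cone point, every such meridian is canonically based at $\ast$. These meridians are the Wirtinger generators of $G(d)$. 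At each connected component $\gamma$ of the pure double point set $D_0$, sliding a meridian of an undercrossing face across $\gamma$ picks up a conjugation by the meridian of the overcrossing face, which is precisely the relation $x=z^y$; so every relation of $G(d)$ holds in $\pi_1(h(F)-L)$, giving a surjection $G(d)\to\pi_1(h(F)-L)$.

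For the reverse direction I would run the Van Kampen iteration. In Theorem \ref{KW} the hypothesis that $F$ be simply connected is used exactly once: to start the induction with a simply connected ``top'' layer lying above all of $L$, so that no generators or relations beyond the Wirtinger ones appear. Here that hypothesis is replaced by the cone: a neighborhood of $\ast$ in $h(F)$ is contractible and meets no sheet of $L$, so the induction begins from the trivial group regardless of $\pi_1(F)$. Sweeping downward in the height coordinate and applying Van Kampen as each sheet, double point curve, self-crossing of $N$, and branch point of $B$ is passed, one finds that each face contributes a single generator, each component of $D_0$ contributes exactly the relation $x=z^y$, and---just as in Theorem \ref{KW}---the relations forced at triple points and branch points are consequences of the double-curve relations and so add nothing new. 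Because $h(F)$ is simply connected, $\pi_1(h(F)-L)$ is normally generated by meridians of $L$, each of which is conjugate (via a path to $\ast$) to a face meridian; this guarantees both that the face meridians generate and that the iteration terminates with no leftover generators. Hence $G(d)\cong\pi_1(h(F)-L)$.

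The quandle statement $Q(d)\cong Q(h(F),L)$ follows from the identical sweep, now using the Van Kampen theorem for quandles, \cite[Theorem 13.1]{DJ}, as was already invoked for Theorem \ref{KW}: the contractible cone neighborhood contributes the single base element $m_Q$, each face contributes one quandle generator, and each component of $D_0$ contributes the quandle relation $x=z^y$. I expect the only genuine obstacle to be the bookkeeping in this sweep in the presence of the higher strata---verifying that the branch set $B$ and the self-crossing set $N$ impose no relations independent of those coming from $D_0$---but this is precisely the part already handled in the proof of Theorem \ref{KW}, and nothing about it changes when $F\times[0,1]$ is replaced by $h(F)$, since all of these strata lie in the interior, away from the cone point. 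Finally, since $\pi_1(h(F)-L)$ and $Q(h(F),L)$ are preserved under virtual equivalence by Theorem \ref{hinv} and its quandle analog, the resulting isomorphisms are independent of the chosen representative and diagram.
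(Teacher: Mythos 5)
Your proposal is correct and follows essentially the same route as the paper: the paper isotopes the faces into the upper half of $F\times[0,1]$, observes that the coned-off region above them is contractible (so its fundamental group with the faces removed is free on the face meridians, replacing the simple-connectivity hypothesis of Theorem \ref{KW}), and then glues on neighborhoods of the double point set via the iterated Van Kampen argument of \cite{KamaWirt}, deferring to that reference the verification that triple points and branch points contribute no independent relations. Your cone-point basepoint and downward sweep are just a repackaging of the same decomposition, so no further comparison is needed.
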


\begin{proof}
The proof follows from the Van Kampen theorem for link groups and quandles; see \cite{KamaWirt, PR} for the details. We will summarize the proof for the link group; the proof for quandles is almost identical.
Let $(F\times [0,1], L)\in V_n$ with $L$ in general position with respect to $\pi:F\times [0,1]\rightarrow F$.
Let $N$ be an open tubular neighborhood of $D$ in $L$ (where $D$ is the double point set; recall that $B\subset D'$). Then $\pi|_{L-N}$ is an embedding into $F$. 
By an isotopy of $L$ which keeps $\pi(L)$ fixed, we can move $(L-N)$ into $F\times [1/2, 1)$, while moving the interior of $N$ into $F\times (0, 1/2)$. Note that each component of $L-N$ naturally corresponds to a face in the diagram of $L$ on $M$.
Let $h(F, L)_+=h(F)-(F\times [0, 1/2])-(L\cap F\times (1/2, 1]))$. This is a contractible space with the faces of $L$ removed from it.
Then $\pi_1(h(F, L)_+)$ is a free group on the faces in the diagram of $L$.
To complete the computation of the link group, we follow the method of \cite{KamaWirt}. For each component of the set of pure double points $D_0$, we must glue on a neighborhood of $N$. Such a neighborhood is shown in \cite{KamaWirt} to have a trivialization which is determined by the four sheets meeting at this double point set. Because $F$ and $L$ are orientable, these sheets can be co-oriented in the diagram of $L$ on $F$. Using the Van Kampen theorem inductively on each component, it is shown in \cite{KamaWirt} that each component of the pure double point set in the diagram contributes a pair of relations between the generators of $\pi_1(h(F, L)_+)$ with the form $w=y, x=yzy^{-1}$, where $w$ and $y$ are the sheets of the overcrossing face, and the normal vector to $y$ pointing toward $x$. Furthermore, no additional relations are introduced from the points in the double point set that are not pure double points.
Thus, $\pi_1(h(F)-L)$ has a presentation whose generators are the faces of a diagram of $L$ on $F$, and with one relation for each meeting of three faces in a double point set as described in the definition of the group $G(\pi(L))$.
\end{proof}

\begin{remark}\label{Hcyclic}
Note that for a link diagram $d$ of $L\subset F\times [0,1]$, all generators in $G(d)$ corresponding to faces of the same connected component of $L$ are conjugate. From that, it is easy to see that the abelianization of $G(d)$ is $\Z^n$, where $n$ is the number of connected components of $L.$ Consequently, $H_1(F\times [0,1]-L)=\Z^n.$
\end{remark}


The computations of $\pi_1(h(F)-L)$ and of $Q(h(F),L)$ via diagrams suggest that many invariants defined for classical $n$-links by referencing their sheets and double point sets and proving invariance under Roseman moves can be extended to invariants of virtual $n$-links. As an example, we can extend the biquandle invariant of $2$-links to virtual $2$-links. The biquandle invariant for $2$-links is defined in \cite{Carrell}, and shown to be an invariant. Given a $2$-link $L$ in $F\times [0,1]$, Carrell constructed a biquandle $B(F\times [0,1], L)$ which is generated by the sheets (rather than the faces, as in the case of the quandle or group) of $L$'s diagram. For each double point curve in the diagram separating sheets $a$ and $d$, as shown in Fig. \ref{biquand}, he introduced the following relations $c=b_{a}, d=a^{b}$. The resulting biquandle was shown to be well-defined and independent of the Roseman moves in \cite{Carrell}. Given a pair $(F\times [0,1], L)\in V_{2}$, we can define $B(F\times [0,1], L)$ in an identical manner.

\begin{figure}[htbp]
\begin{centering}
\includegraphics[scale=0.3]{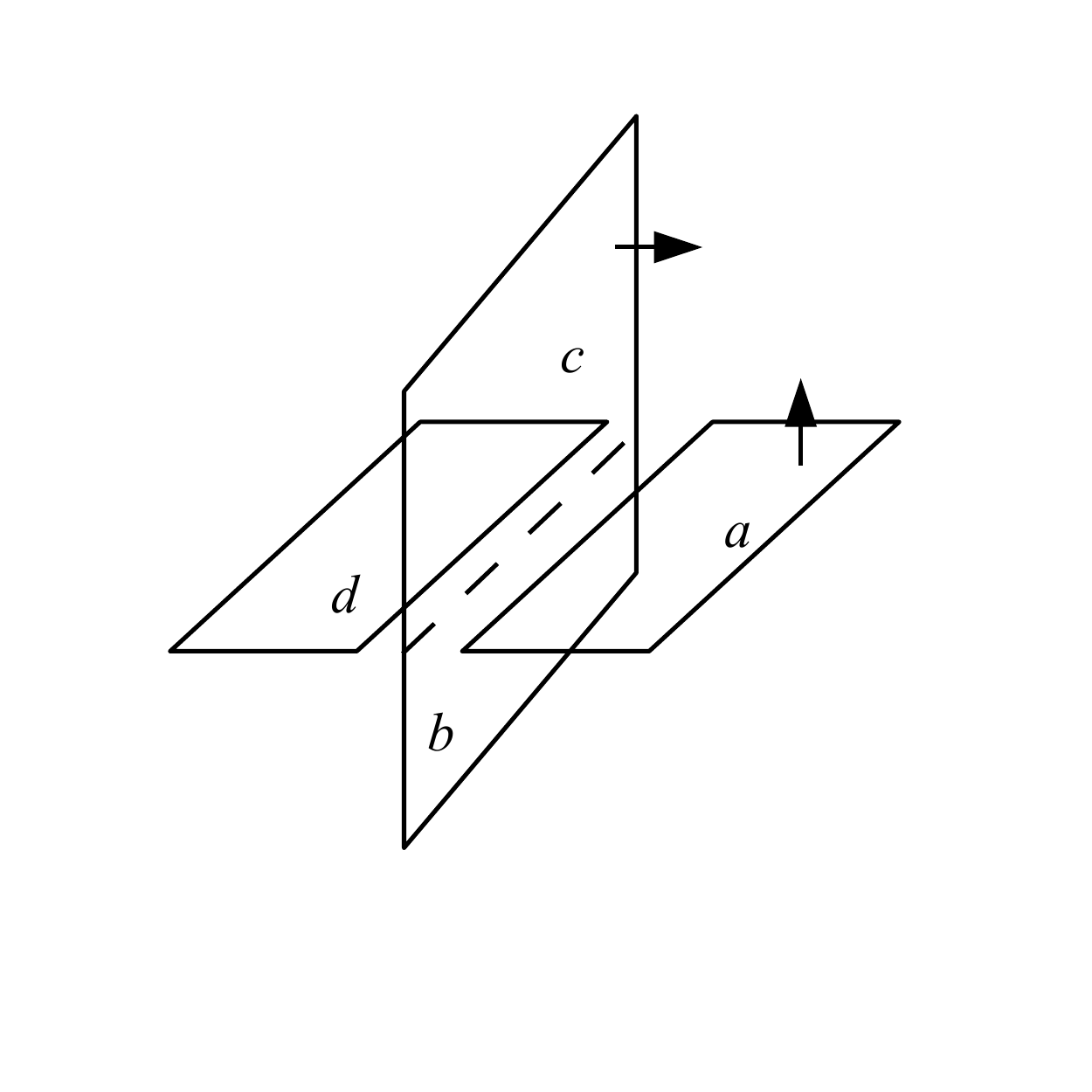}
\caption{A double point curve as shown yields relations $c=b_{a}, d=a^{b}$ on the biquandle $B(F\times [0,1], L)$}
\label{biquand}
\end{centering}
\end{figure}

\begin{theorem}
The biquandle $B(F\times [0,1], L)$ is invariant under virtual equivalence.
\end{theorem}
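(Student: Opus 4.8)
The plan is to reduce invariance under the full relation $\cong$ to invariance under its two generators, namely smooth isotopy of $L$ and the relation $\sim$ induced by an embedding $f:F_1\to F_2$ with $(f\times id_{[0,1]})(L_1)=L_2$. Since $\cong$ is by definition the equivalence relation on $V_2$ generated by these two moves, it suffices to verify that $B(F\times[0,1],L)$ is unchanged, up to isomorphism, under each of them. Throughout I will assume the representatives are in general position with respect to $\pi$, which is legitimate by Theorem \ref{Rthm}, so that the diagram and hence the biquandle presentation are defined.

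Invariance under isotopy of $L$ I plan to obtain from Carrell's work combined with Roseman's theorem. If $L$ and $L'$ are isotopic in $F\times[0,1]$ and both lie in general position with respect to $\pi$, then by \cite[Prop. 2.9]{Roseman2} their diagrams on $F$ differ by a finite sequence of ambient isotopies of $F$ and Roseman moves. Carrell showed in \cite{Carrell} that the defining relations of $B(F\times[0,1],L)$ are preserved under every Roseman move, so each such move induces an isomorphism of the associated biquandle presentations. An ambient isotopy of $F$ carries the diagram to one with the same combinatorial structure---the same sheets, the same double point curves, and the same over/under markings---hence leaves the presentation literally unchanged. Composing these isomorphisms yields $B(F\times[0,1],L)\cong B(F\times[0,1],L')$.

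Invariance under $\sim$ should be essentially formal. The embedding $f:F_1\to F_2$ yields a diffeomorphism $f\times id_{[0,1]}$ of $F_1\times[0,1]$ onto $f(F_1)\times[0,1]\subseteq F_2\times[0,1]$ which commutes with the projections, in the sense that $\pi_2\circ(f\times id_{[0,1]})=f\circ\pi_1$, and which carries $L_1$ onto $L_2$. Consequently, if $L_1$ is in general position with respect to $\pi_1$ then $L_2$ is in general position with respect to $\pi_2$, and the diagram of $L_2$ is exactly the image under $f$ of the diagram of $L_1$. Because $f$ is a diffeomorphism onto its image and $f\times id_{[0,1]}$ preserves the $[0,1]$ coordinate, it induces a bijection between the sheets of the two diagrams and between their double point curves that respects the over/under data at each crossing curve. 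This bijection therefore matches the defining relations $c=b_a$, $d=a^b$ of the two biquandles and gives an isomorphism $B(F_1\times[0,1],L_1)\cong B(F_2\times[0,1],L_2)$. The part of $F_2$ lying outside $f(F_1)$ is irrelevant, since $L_2\subset f(F_1)\times[0,1]$ contributes no sheets or double points there.

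I expect the main obstacle to be the bookkeeping in the isotopy step: one must confirm that a generic isotopy of $L$ genuinely realizes the passage between two general-position diagrams purely through Roseman moves and ambient isotopies of $F$, so that Carrell's Roseman-invariance applies verbatim, and that no subtlety peculiar to working in a thickened surface $F\times[0,1]$ rather than in $S^4$ invalidates his local analysis of the relations. The relation-$\sim$ step, by contrast, should be routine once one observes that $f\times id_{[0,1]}$ preserves both the projection and the vertical coordinate, and hence the entire labeled diagram.
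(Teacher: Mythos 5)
Your proposal is correct and follows essentially the same two-step strategy as the paper: invariance under changes of the diagram (via Roseman moves, citing Carrell's verification) and invariance under the relation $\sim$ that changes the ambient manifold, the latter being essentially formal because the presentation depends only on the sheets and double point curves. Your version simply spells out the decomposition of $\cong$ into its generators more explicitly than the paper does.
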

\bpr The proof requires two steps. First, the biquandle must be shown to be invariant under Roseman moves on the diagram of $L$, which involves checking the effect of each Roseman move on the biquandle. This is verified in \cite{Carrell}. Then we must show that it does not change under virtual equivalences that change $M$. This, however, is immediate from the definition, which depends only on the sheets, faces, and double point curves. \epr

Biquandles thus define invariants of both virtual surface links and classical surface links.

\begin{question}
Is it possible to give a purely geometric definition of $B(F\times [0,1], L)$ which does not use link diagrams?
\end{question}

\begin{question}
For $n\geq 3$, it is possible to define a biquandle using sheets and crossing sets in a diagram to define generators and relations in a similar manner. Is this biquandle well-defined and invariant for $n$-links?
\end{question}

The group, quandle, and biquandle invariants can be difficult to apply in practice due to the difficulty involved in determining whether two groups, quandles, or biquandles are isomorphic. For this reason in practice it is desirable to use other invariants which may be more easily compared. One such invariant, defined for any $n$-link and which we can extend to virtual $n$-links, is the notion of a virtual link $(F\times [0,1], L)$ being colorable by a finite quandle $R$, meaning that there is a surjective homomorphism $Q(h(F), L)\rightarrow R$. Quandle colorings can be used to construct other invariants, such as the quandle cocycle invariants, \cite{PR}, (see \cite{CKS} for additional discussion of cocycle invariants in the case of $2$-links). These coloring and cocycle invariants have the advantage of being more easily compared.

\begin{theorem}
The quandle cocycle invariants defined for $n$-links in \cite{PR} are invariants for virtual $n$-links.
\end{theorem}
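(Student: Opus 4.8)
The plan is to reduce the statement to the two ingredients that already drive the biquandle invariance theorem proved just above: invariance of the underlying coloring data under virtual equivalence, and locality of the cocycle weights. Recall that for a fixed finite quandle $R$ and a fixed quandle $(n+1)$-cocycle, the cocycle invariant of an $n$-link diagram $d$ on $F$ is a state sum
\[
\Phi(F\times[0,1],L)=\sum_{\mathcal{C}}\ \prod_{p}\ B(p,\mathcal{C}),
\]
where $\mathcal{C}$ ranges over the $R$-colorings of the faces of $d$ (that is, over quandle homomorphisms $Q(d)\to R$), the product runs over the isolated $(n+1)$-fold multiple points $p$ of the diagram, and $B(p,\mathcal{C})$ is the Boltzmann weight obtained by evaluating the cocycle on the colors of the faces meeting at $p$ together with their co-orientations. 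In \cite{PR} this quantity is shown to be unchanged by the Roseman moves, which already disposes of the smooth-isotopy generator of $\cong$ (together with the combinatorially transparent ambient isotopies of $F$).

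First I would record that the colorings form a virtual invariant. A coloring is precisely a quandle homomorphism $Q(d)\to R$, and by the computation theorem of Section~\ref{s-homotopy-inv} we have $Q(d)\cong Q(h(F),L)$; since $Q(h(F),L)$ is invariant under virtual equivalence (the deformation retraction of Theorem~\ref{hinv} preserves the peripheral data and hence the quandle), the colorings are canonically identified across virtually equivalent representatives. For the remaining, stabilization, generator $\sim$ — an embedding $f:F_1\to F_2$ with $(f\times\mathrm{id}_{[0,1]})(L_1)=L_2$ — this identification is realized geometrically: $f\times\mathrm{id}$ carries the diagram $d_1$ of $L_1$ onto the diagram $d_2$ of $L_2$ as a combinatorial object, taking faces to faces and thereby inducing a bijection $\mathcal{C}_1\leftrightarrow\mathcal{C}_2$ on colorings.

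Next I would argue that each individual Boltzmann weight is preserved. Because $f\times\mathrm{id}$ is a diffeomorphism onto its image, it carries every $(n+1)$-fold point $p$ of $d_1$ to an $(n+1)$-fold point of $d_2$, preserving the colors of the faces meeting there (under the identification above) as well as the local co-orientation data, and it creates no new multiple points. Hence $B(p,\mathcal{C}_1)=B(f(p),\mathcal{C}_2)$ for every $p$, so the two products agree term by term and $\Phi(F_1\times[0,1],L_1)=\Phi(F_2\times[0,1],L_2)$. Combining this with the Roseman-move invariance from \cite{PR} shows that $\Phi$ is constant on each class of $V_n/\cong$, which is the assertion.

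I expect the only genuine content to lie in verifying that the data feeding $B(p,\mathcal{C})$ — the co-orientations of the sheets at $p$ and the height-ordering conventions used to insert colors into the cocycle — are intrinsic to a neighborhood of $p$ in the projection $\pi(L)$, so that they are transported faithfully by $f\times\mathrm{id}$ regardless of any global orientation of $F_2$. This is the same locality observation that rendered the biquandle invariance immediate, but here it must be checked at the $(n+1)$-fold points rather than along double-point curves, and one should confirm that the sign conventions of \cite{PR} depend only on the restriction of the ambient orientation to such a neighborhood. Once that is in place, the state sum visibly does not see the ambient surface, and no further computation is required.
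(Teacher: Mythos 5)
Your proposal is correct and follows essentially the same route as the paper's (much terser) proof: the cocycle state sum depends only on the local combinatorial data of the diagram (colorings of faces and Boltzmann weights at multiple points), Roseman-move invariance is quoted from \cite{PR}, and the stabilization generator $f\times\mathrm{id}$ carries the diagram over unchanged, so the ambient space plays no role. Your additional care about transporting co-orientations and sign conventions at the $(n+1)$-fold points is a worthwhile elaboration of what the paper simply asserts as ``the ambient space for the diagram is not relevant to the definition.''
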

\bpr The definitions of quandle cocycle invariants are constructed by considering the sheets and double points of a link diagram and showing the result is independent of Roseman moves. The ambient space for the diagram is not relevant to the definition. Therefore, the quandle cocycles are invariant under changes to the diagram by Roseman moves and invariant under virtual equivalence. \epr



\begin{theorem}
Suppose $K\subset F\times [0,1]$ is a ribbon $n$-link, $n\geq 2$, with ribbon solid $R\subset F\times [0,1]$. Then $K$ is virtually equivalent to a ribbon link in $D^{n+1}\times [0,1]$ bounding a ribbon disk with the same ribbon presentation.
\end{theorem}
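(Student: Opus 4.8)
The plan is to realize $K$, after a virtual equivalence, as the boundary of a thickened $(n+1)$-dimensional handlebody whose base surface embeds in $D^{n+1}$, the point being that the ribbon solid is homotopy-equivalent to a graph and $n+1\geq 3$ leaves enough room to standardize everything. I would begin by recording the basic fact that $K=\partial R$: the ribbon link is exactly the boundary of the $(n+1)$-dimensional handlebody $R$ obtained by attaching the $1$-handles $H_j$ to the bases $B_i$, the ribbon intersections being interior singularities that do not meet the boundary. Consequently any sub-thickened-surface of $F\times[0,1]$ containing $R$ automatically contains $K$, so it suffices to trap $R$ inside a base surface that embeds in $D^{n+1}$.

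The core of $R$ is a graph $G$ (one vertex per base, one edge per handle), so $\dim G=1$ while $\dim F=n+1\geq 3$. First I would isotope $R$ in $F\times[0,1]$ so that $\pi|_G$ is an embedding of $G$ into $F$; this is generic, since a $1$-complex projects to an embedded $1$-complex in a target manifold of dimension $\geq 3$. Shrinking the bases and thinning the handles, I can force $R$ to lie in an arbitrarily small neighborhood of $G$, so that $\pi(R)$ is contained in a regular neighborhood $F_0$ of the embedded graph $\pi(G)$ in $F$. Such a regular neighborhood is an $(n+1)$-dimensional handlebody built from $0$- and $1$-handles, with trivial framings because $F$, $R$, and $K$ are orientable, and an orientable $0/1$-handlebody of dimension $n+1\geq 3$ embeds in $\R^{n+1}\subset D^{n+1}$. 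I would fix such an embedding $f:F_0\hookrightarrow D^{n+1}$.

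Since $K\subset R\subset F_0\times[0,1]$, the inclusion $F_0\hookrightarrow F$ realizes the defining relation $\sim$, giving $(F_0\times[0,1],K)\sim(F\times[0,1],K)$; composing with $f$ yields $(F_0\times[0,1],K)\sim(D^{n+1}\times[0,1],(f\times\mathrm{id})(K))$. The image $(f\times\mathrm{id})(R)$ is a ribbon solid in $D^{n+1}\times[0,1]$ carrying exactly the same bases, handles, and ribbon intersections as $R$, so its boundary $(f\times\mathrm{id})(K)$ is a ribbon link with the same ribbon presentation, bounding the ribbon solid $(f\times\mathrm{id})(R)$. Finally, since $D^{n+1}\times[0,1]$ is simply connected and $n\geq 2$, the lemma characterizing ribbon presentations by their ribbon data shows that this presentation is, up to simple equivalence, the standard model determined by the ribbon data of $K$, which is precisely the meaning of ``the same ribbon presentation.''

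I expect the main obstacle to be the second step: arranging the geometry so that $F_0$ is genuinely an orientable handlebody embeddable in $D^{n+1}$ while still containing all of $R$. The delicate point is the ribbon intersections, for when a handle $H_j$ passes through a base $B_i$ to whose core vertex it is not incident, the edge $\pi(e_j)$ must run close to the vertex $\pi(v_i)$; I therefore have to choose the generic projection and the widths of the handles and bases compatibly, so that every ribbon intersection is swept into $F_0\times[0,1]$ without forcing $\pi(G)$ to self-intersect or making $F_0$ acquire spurious topology from the non-injectivity of $\pi|_R$. Verifying that the regular neighborhood of $\pi(G)$ can be taken thick enough to engulf the projected bases and all ribbon intersections, yet thin enough to remain a trivially framed $0/1$-handlebody, is the crux; the remaining steps are formal consequences of the definitions of $\sim$ and $\cong$ and of the cited lemma.
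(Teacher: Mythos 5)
Your proposal is correct and follows essentially the same route as the paper: pass to the core graph of the ribbon solid, project it generically (an embedding since $\dim F=n+1\geq 3$), shrink $R$ into a regular neighborhood of the projected graph, and observe that this orientable $(n+1)$-dimensional handlebody embeds in $D^{n+1}$, inducing the desired virtual equivalence. Your additional attention to engulfing the ribbon intersections and to framings is a reasonable elaboration of the same argument rather than a different method.
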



\bpr Let $\Gamma$ be a core of the ribbon solid $R$ -- that is a $1$-dimensional CW-complex inside $R$ to which $R$ retracts to.
We can assume that $\Gamma$ is in a general position with respect to the canonical projection $\pi:F\times [0,1]\to F$. Then $\pi(\Gamma)$ is a finite $1$-dim CW-complex and, by ``shrinking'' $R$ towards $\Gamma$ if necessary, we can assume that $\pi(\Gamma)$ is a deformation retract of $\pi(R).$

Furthermore, for a sufficiently small closed neighborhood $N$ of $\pi(R)$ in $F$, that neighborhood deformation retracts to $\pi(\Gamma)$ as well.
Note that $(F\times [0,1],K)$ is equivalent to $(N\times [0,1],K).$ Since $F$ is orientable, $N$ is an orientable $n+1$ dimensional handle body. That handlebody embeds into $D^{n+1}$ for $n\geq 2$. (Note that $n\geq 2$ may be necessary, since $\pi(\Gamma)$ does not have to be a planar graph.
\epr

For virtual knots $K\subset F\times [0,1]$, $H_1(h(F)-K)=\Z$ by Remark \ref{Hcyclic}. It follows that the action of the deck transformations on the homology groups of the universal abelian cover $\widetilde{h(F)-K}$ will make them into $\Z [t, t^{-1}]$ modules. As such, we can construct Alexander-type invariants for virtual $n$-knots.

Theorem \ref{ckvk} is an important result for the application of virtual $1$-links to the study classical links, since it shows that two classical knots in $S^2\times [0,1]$ which are virtually equivalent must be classically isotopic as well. It is thus natural to ask whether an analogous result holds for higher dimensions. As Theorem \ref{hinv} shows, many homotopy invariants of $n$-links in $(n+2)$-balls will extend to invariants of virtual $n$-links. However, for $n\geq 2$, the homeomorphism type of a knot complement does not determine the knot up to isotopy. Examples of inequivalent knotted spheres with homeomorphic complements are given in \cite{Gor}. See \cite[Section 3.1.3]{CKS} for further discussion of this problem.

This means that the homotopy type of $h(F)-K$, while it is a powerful enough invariant to allow many homotopy invariants of classical links to be extended to virtual links, will not allow a complete invariant of classical $n$-links to be extended to virtual $n$-links, even taking the peripheral structure into account. In fact, in the case of tori in a $4$-ball, we can give an example of two virtually equivalent knots which are not isotopic in the $4$-ball. For this example, we must define the \emph{spun torus} for a knot in $\R^{3}$. This is a torus in $\R^{4}$ (or equivalently in $D^{4}$ or $S^{4}$) constructed as follows. Let $\R^{3}_{+}=\{ (x, y, z)\in \R^{3} | x \geq 0\}$ with $K\subset \R^3$ a knot, and isotope $K$ so that $K\subset \R^{3}_{+}$. We can consider an open book decomposition of $\R^{4}$ given by $\R^{3}\times S^{1}/(0, y, z, t)\sim (0, y, z, t')$. Then $K\times S^{1}\subset \R^{3}\times S^{1}/(0, y, z, t)\sim (0, y, z, t')$ is a torus embedded in $\R^{4}$. We will denote this torus as $Spun(K)$.

A similar construction yields the \emph{$n$-turned spun torus} corresponding to $K$. This is defined in the same way except that instead of taking $K\times S^{1}$ directly, we instead rotate $K$ by $n$ complete turns as we circle the open book decomposition, \cite{CKS}. The result of this will be denoted $Spt_{n}(K)$. Note that $Spun(K)=Spt_{0}(K)$. In \cite{Boy}, Boyle defines a construction which shows that for $T$ the trefoil knot, $Spt_{1}(T)$ is not a ribbon knot, using invariants derived from the second homology of the complement of the torus in $S^{4}$ (cf. also the discussion in \cite{CKS}). On the other hand, $Spun(T)$ is ribbon (and in fact, the spun torus for any classical knot is ribbon, \cite{CKS, SS}). Therefore, in particular $Spt_{1}(T)$ and $Spun(T)$ are not isotopic in $S^{4}$ and thus cannot be isotopic in $D^{3}\times [0,1]$. Recall that knots in $S^{4}$, $\R^{4}$, and $D^{4}$ are isotopic in one of these spaces iff they are isotopic in all of them.


\begin{theorem}
Let $K$ be a $1$-knot in $D^{3}$. Then for any $m, n$, $Spt_m(K)$ and $Spt_{n}(K)$, considered as knots in $D^{3}\times [0,1]$, are virtually equivalent.
\end{theorem}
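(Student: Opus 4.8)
The plan is to isolate the single geometric feature that distinguishes $Spt_n(K)$ from $Spt_m(K)$ — the number of full rotations applied to $K$ as one travels around the spinning circle — and to show that changing this number is realized by a self-diffeomorphism of the form $g\times id_{[0,1]}$ of a suitable thickened $3$-manifold $F\times[0,1]$, hence by the move $\sim$ that generates virtual equivalence. Since $\sim$ permits re-embedding the factor $F$ while genuine isotopy in $D^4$ does not allow such a twist (this is exactly where Boyle's non-ribbon obstruction for $Spt_1(T)$ lives), this is the mechanism that collapses all turnings into one virtual class. Concretely, it suffices to produce, for each $n$, a virtual equivalence $(D^3\times[0,1], Spt_n(K))\cong (F\times[0,1], L_n)$ with $F$ and the family $L_n$ chosen so that $F$ is independent of $n$, together with virtual equivalences $(F\times[0,1],L_n)\cong(F\times[0,1],L_m)$; transitivity of $\cong$ then finishes the argument.

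For the model I would take $F=D^2\times S^1$, the solid torus, and read off the turned spin in coordinates adapted to the open book. Writing the page as the half-space with coordinate $x\ge 0$ transverse to the binding and $(y,z)$ the remaining page coordinates, the turning is the rotation $\rho_\phi$ of $(y,z)$ about the $x$-axis, and the spun core is the unknotted circle obtained by spinning a point of the $x$-axis. Taking the $[0,1]$-factor to be the $x$-direction and $F=D^2_{(y,z)}\times S^1_\theta$ with $\theta$ the open-book angle, the turned spin becomes $L_n=\{(\rho_{n\theta}(y(s),z(s)),\theta,x(s)) : s\in S^1,\ \theta\in S^1\}\subset F\times[0,1]$, an embedded torus meeting each slice $\{\theta=\mathrm{const}\}\times[0,1]\cong D^3$ in a copy of $K$. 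Now the map $g\colon F\to F$, $g(y,z,\theta)=(\rho_{(m-n)\theta}(y,z),\theta)$, is a well-defined diffeomorphism of the solid torus precisely because $m-n\in\Z$, and a direct check gives $(g\times id_{[0,1]})(L_n)=L_m$. Thus $(F\times[0,1],L_n)\sim(F\times[0,1],L_m)$, supplying the second ingredient.

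The remaining, and main, point is to identify the realizable representative $(D^3\times[0,1], Spt_n(K))$ with the solid-torus model $(F\times[0,1],L_n)$ up to $\cong$. Here I would use that the unknotted solid torus embeds standardly in the ball, giving an embedding $e\colon D^2\times S^1\hookrightarrow D^3$ and hence, via $e\times id_{[0,1]}$, a relation $(F\times[0,1],L_n)\sim (D^3\times[0,1],(e\times id)(L_n))$; it then remains to isotope $(e\times id)(L_n)$ to $Spt_n(K)$ inside $D^3\times[0,1]=D^4$. Both surfaces are $K$ spun with the same number of turns about an unknotted spinning circle, so they are ambient isotopic in $D^4$ once the framings are matched. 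The delicacy is entirely in this framing comparison: the product framing used by $e\times id$ may differ from the normal framing of the spun core coming from the open book by a fixed number of twists. Crucially, any such discrepancy is independent of $n$, so at worst it replaces $L_n$ by $L_{n+c}$ for a constant $c$; since the second step already gives $L_a\cong L_b$ for all $a,b$, the constant is immaterial. I expect verifying this framing matching (and that the spun core is genuinely unknotted with its open-book framing) to be the only subtle step.

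Assembling the pieces, $(D^3\times[0,1],Spt_n(K))\cong(F\times[0,1],L_{n+c})\cong(F\times[0,1],L_{m+c})\cong(D^3\times[0,1],Spt_m(K))$, so $Spt_m(K)$ and $Spt_n(K)$ are virtually equivalent. The conceptual content is that the turning of a spin is a Dehn-twist-type self-map of the solid-torus factor $F$, which is a legitimate $\sim$-move but fails to extend to an ambient isotopy of $D^4$ — precisely the gap between virtual and genuine equivalence that this example is meant to exhibit.
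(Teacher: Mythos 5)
Your proposal is correct and follows essentially the same route as the paper: restrict both turned spins to a common solid-torus subspace $D^2\times S^1\times[0,1]$ of $D^3\times[0,1]$ via the $\sim$ relation, then realize the change in turning number by a twist self-diffeomorphism $f\times id_{[0,1]}$ of that solid torus. Your treatment is somewhat more careful than the paper's (which asserts the containment and the twist without discussing coordinates or framings), and your observation that any framing discrepancy is a constant independent of $n$, hence absorbed by the twist step, is a worthwhile addition.
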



\bpr
By construction, both $Spt_m(K)$ and $Spt_n(K)$ are contained inside inside a $D^2\times S^1\times [0,1]$ subspace. It follows that $Spt_m(K)$ is virtually equivalent to $(D^2\times S^1\times [0,1], Spt_m(K))$. Likewise, $Spt_n(K)\cong (D^2\times S^1\times [0,1], Spt_n(K))$. It is then straightforward to construct a map $f:D^2\times S^1\rightarrow D^2\times S^1$ such that $f\times id_{[0,1]}$ carries $Spt_m(K)$ to $Spt_n(K)$, by letting $f$ twist the solid torus $D^2\times S^1$ by $n-m$ turns. Then $f$ induces a virtual equivalence between $(D^2\times S^1\times [0,1], Spt_m(K))$ and $(D^2\times S^1\times [0,1], Spt_n(K))$.\epr

It is in fact reasonable to expect that the $Spt_n$ construction will provide many examples of classical $2$-links which are classically distinct but virtually equivalent. Such an example will be produced any time that $Spt_m(K)$ and $Spt_{n}(K)$ are not classically isotopic. However, classifying $Spt_n(K)$ as a classical surface knot for general $K$ and $n$ is an open problem.

\begin{corollary}
There exists a ribbon $2$-knot (specifically the knotted torus $Spun(T)$) in $D^{3}\times [0,1]$ which is virtually equivalent to a non-ribbon knot, $Spt_1(T)$.
\end{corollary}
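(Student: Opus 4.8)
The plan is to obtain this corollary as an immediate packaging of the theorem immediately preceding it together with the ribbon facts recalled in the discussion above. First I would apply that theorem in the special case $m=0$, $n=1$: it asserts that for any $1$-knot $K$ in $D^3$ the knots $Spt_0(K)$ and $Spt_1(K)$, regarded as knots in $D^3\times[0,1]$, are virtually equivalent. Taking $K=T$ the trefoil, and using the definitional identity $Spun(T)=Spt_0(T)$, this exhibits $Spun(T)$ as virtually equivalent to $Spt_1(T)$ in $D^3\times[0,1]$.

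Second, I would invoke the two ribbon facts assembled just before the theorem. On one hand, the spun torus of any classical knot is ribbon, so $Spun(T)$ is a ribbon $2$-knot; on the other hand, Boyle's computation \cite{Boy} shows that $Spt_1(T)$ is not ribbon, the obstruction being detected by the second homology of the complement of the torus in $S^4$. Combining these with the virtual equivalence from the first step yields exactly the asserted pair: a ribbon $2$-knot $Spun(T)$ in $D^3\times[0,1]$ that is virtually equivalent to the non-ribbon knot $Spt_1(T)$.

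The only point requiring a little care is a matter of ambient spaces: the constructions $Spun(T)$, $Spt_1(T)$ and the (non-)ribbon facts are most naturally stated for surface knots in $S^4$, whereas the corollary speaks of knots in $D^3\times[0,1]$. I would therefore remark that, under the standard inclusions $D^3\times[0,1]\subset D^4\subset S^4$ already used above to pass between isotopy classes, a ribbon presentation of a knot supported in the smaller space is a ribbon presentation in the larger one and conversely, so the ribbon property transfers without ambiguity. This is the same identification the preceding paragraph relies on to conclude that $Spt_1(T)$ and $Spun(T)$ fail to be isotopic in $D^3\times[0,1]$. I expect this transfer of the ribbon property to be the only non-formal ingredient, and it is routine; the substance of the corollary is entirely carried by the preceding theorem and by Boyle's result.
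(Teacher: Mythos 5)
Your proposal is correct and is essentially the argument the paper intends: the corollary is an immediate consequence of the preceding theorem (with $m=0$, $n=1$, $K=T$) combined with the facts, recalled just before it, that $Spun(T)=Spt_0(T)$ is ribbon while Boyle's result shows $Spt_1(T)$ is not. Your added remark on transferring the ribbon property between $D^3\times[0,1]$, $D^4$, and $S^4$ is a sensible clarification of a point the paper treats implicitly, but it does not constitute a different route.
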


On the other hand, the above examples only involve knotted tori, and their construction makes use of embedding a solid torus into $S^4$ with a ``twist'' that is not detected by virtual knots. It is, therefore, still an open question whether there are examples of knotted spheres in $D^{3}\times [0,1]$ which are virtually equivalent but not classically isotopic.

\begin{conjecture}
For $n\geq 2$ there are knotted $n$-spheres in $D^{n+1}\times [0,1]$ which are virtually equivalent but not classically isotopic.
\end{conjecture}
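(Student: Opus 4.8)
The plan is to reproduce, for spheres, the mechanism behind the torus examples: to find a compact $(n+1)$-manifold $F$, a self-diffeomorphism $g$ of $F$ realizing a ``unit twist'' (as the Dehn twist of the solid torus did in the genus-one case), and a knotted $n$-sphere $K_0\subset F\times[0,1]$ whose projection retracts $F\times[0,1]$, so that the spheres $K_k:=(g^k\times id_{[0,1]})(K_0)$ are pairwise virtually equivalent by construction. First I would fix $F$ and $K_0$ so that each $K_k$ is realizable, i.e. so that the $K_k$ are genuine knotted spheres in $S^{n+2}$; the self-embeddings $g^k$ then furnish virtual equivalences among them, exactly as in the proof that $Spt_m(K)$ and $Spt_n(K)$ are virtually equivalent. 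The remaining, and decisive, task is to exhibit two values $k\neq k'$ for which $K_k$ and $K_{k'}$ are not ambient isotopic in $S^{n+2}$.

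The difficulty is sharply constrained by the invariants already established. By Theorem \ref{hinv} a virtual equivalence preserves the homotopy type of $h(F)-K$, hence all the group, quandle, biquandle, and cocycle invariants, and -- since $H_1(h(F)-K)=\Z$ by Remark \ref{Hcyclic} -- all of the resulting Alexander-type $\Z[t,t^{-1}]$-modules. Moreover, when a sphere is realized classically through $F=D^{n+1}$ one has $h(D^{n+1})-K\simeq S^{n+2}-K$, so any two virtually equivalent spheres automatically have homotopy equivalent complements and the same peripheral data. Consequently $K_k$ and $K_{k'}$ cannot be separated by \emph{any} invariant of the complement's homotopy type; in particular the naive candidate of twist-spinning is ruled out, since twist-spun spheres with distinct twisting generally have distinct Alexander modules and so could never be virtually equivalent. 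What is needed instead is a classical invariant sensitive to the \emph{embedding} of $K$ in $S^{n+2}$ but not determined by the bare homotopy type of the complement -- the role played in the torus case by Boyle's second-homology/ribbon obstruction, \cite{Boy}, which distinguishes $Spt_1(T)$ from $Spun(T)$ even though their complements agree.

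The hard part will be producing such an invariant for spheres, whose complements are homology circles and which therefore carry far less room than the genus-one complements exploited by Boyle. My approach would be to draw on the known examples of non-isotopic knotted spheres with homeomorphic complements (Gordon, \cite{Gor}; see also the discussion in \cite{CKS}) and to try to realize a pair of them as $K_k,K_{k'}$ for a single twist $g$. Concretely, I would look for $g$ of the form of a meridional ``spin'' or Gluck-type twist supported in a tubular region of $F$ that does not extend to an ambient diffeomorphism of $S^{n+2}$; such a $g$ would change the classical isotopy type -- detected by a Farber--Levine pairing, a Casson--Gordon-type $\rho$-invariant, or directly by Gordon's complement-but-not-knot phenomenon -- while acting by an honest self-embedding of $F$ and hence by a virtual equivalence. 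The main obstacle, and the reason the statement is posed as a conjecture rather than a theorem, is precisely the reconciliation of these two demands: the twist must be invisible to every homotopy invariant of the complement (so that the $K_k$ remain virtually equivalent) yet visible to a genuinely embedding-theoretic invariant (so that some $K_k,K_{k'}$ are classically distinct). Verifying both properties simultaneously for an explicit $F$, $K_0$, and $g$ -- rather than only for tori, where the extra genus supplies the needed $H_2$ -- is the crux that remains to be carried out.
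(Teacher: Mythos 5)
The statement you are addressing is posed in the paper as a \emph{conjecture}, and the paper offers no proof of it; the only guidance given there is the closing remark that Gordon's pairs of non-isotopic knotted $2$-spheres with homeomorphic complements ``may be good candidates for proving this conjecture.'' So there is no argument in the paper against which to measure yours, and the relevant question is whether your proposal actually closes the gap. It does not, and you say so yourself: your final paragraph concedes that producing an explicit $F$, $K_0$, and self-diffeomorphism $g$ satisfying both requirements ``is the crux that remains to be carried out.'' What you have written is a research plan, not a proof, and it should be assessed as such.

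That said, the plan is a faithful and somewhat sharpened version of the paper's own heuristic. You correctly isolate the two competing demands: the twist must be invisible to every invariant of the homotopy type of $h(F)-K$ (Theorem \ref{hinv} forces this, and for spheres the homology-circle complement leaves very little room), yet visible to some genuinely embedding-theoretic classical invariant. Your observation that twist-spinning cannot work for spheres (distinct twists generically change the Alexander module, hence the homotopy type of the complement, hence would obstruct virtual equivalence rather than furnish it) is a useful refinement not made explicit in the paper. But two substantive gaps remain beyond the missing construction itself. First, Gordon's examples give pairs with \emph{homeomorphic complements}; virtual equivalence is neither implied by nor known to imply this, so even granting a Gordon pair you would still need to exhibit a common compact $F$ and a self-embedding carrying one sphere to the other --- the analogue of the solid-torus Dehn twist --- and nothing in the homeomorphism of complements hands you that. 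Second, your proposed ``Gluck-type twist supported in a tubular region of $F$'' must be checked to preserve realizability of \emph{both} spheres in $D^{n+1}\times[0,1]$, which is exactly the step that fails for naive candidates. Until a specific $(F,K_0,g)$ is produced and both properties are verified, the statement remains a conjecture.
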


As stated previously, the homeomorphism type of a knot complement does not necessarily determine the knot for $2$-knots. This is true even for $2$-spheres: there is an infinite family of pairs of knotted $2$-spheres such that the spheres in each pair are non-isotopic but have homeomorphic complements, \cite{Gor}. Such knots may be good candidates for proving this conjecture.

\subsection{A Method for Constructing Virtual Links}\label{spinning}
Let $N$ be a fixed $n$-manifold. Then we can define a map $\Phi_N:V_m\rightarrow V_{m+n}$. Given a pair $(F\times [0,1], L)\in V_m$, $L$ is embedded in $F\times [0,1]$ by some map $f$. Then we define $\Phi_N(F\times [0,1], L)$ to be the pair $(F\times N\times [0,1], L\times N)$, where $L\times N$ is embedded into $F\times N\times [0,1]$ by the map $f\times id_N\times id_{[0,1]}$. It is easy to see that this map descends to a map $V_m/\cong\rightarrow V_{m+n}$.

\begin{conjecture}\label{VK_m+n-conj}
For any $N$, the map $\Phi_N:V_m\rightarrow V_{m+n}$ has the property that $\Phi(F\times [0,1], L)\cong \Phi(F'\times [0,1], L')$ only if $(F\times [0,1], L)$ and $(F'\times [0,1], L')$ are virtually equivalent up to mirror images and orientation reversal of $L$.
\end{conjecture}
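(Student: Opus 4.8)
The plan is to reduce Conjecture~\ref{VK_m+n-conj} to a cancellation problem for the product operation $\Phi_N$ and then attack that problem geometrically. First I would normalize both sides using Remark~\ref{abstractknot}: replace $(F\times[0,1],L)$ and $(F'\times[0,1],L')$ by abstract representatives in which $F$ and $F'$ are regular neighborhoods of $\pi(L)$ and $\pi(L')$. Since the canonical projection collapsing the $[0,1]$-factor carries $L\times N$ onto $\pi(L)\times N$, a regular neighborhood of the image is again a product, so on abstract links $\Phi_N$ takes the simple form $(F,L)\mapsto(F\times N,L\times N)$. This reframes the statement as a \emph{cancellation} assertion: if $(F_1\times N\times[0,1],L_1\times N)\cong(F_2\times N\times[0,1],L_2\times N)$, then $(F_1\times[0,1],L_1)\cong(F_2\times[0,1],L_2)$ up to mirror image and orientation reversal of the $L_i$.

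The idea for recovering the factors is to exploit the two transverse product foliations of $L\times N$ (the sheets $\{\ell\}\times N$ and the slices $L\times\{p\}$) together with the forgetful projection $q:F\times N\times[0,1]\to F\times[0,1]$, which is an $N$-bundle carrying $L\times N$ onto $L$. A single interior slice $F\times\{p\}\times[0,1]$ meets $L\times N$ in a canonical copy of $(F\times[0,1],L)$, so at the level of a fixed geometric representative the original link is literally a slice of its image under $\Phi_N$. I would therefore try to build a de-spinning map $\Psi$ on the image of $\Phi_N$ in $V_{m+n}/\!\cong$, defined by choosing such a slice, and then prove $\Psi\circ\Phi_N=\mathrm{id}$ up to the stated symmetries. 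The two generators of $\cong$ must be analyzed separately: for an ambient isotopy of $L\times N$ one would use a parametrized isotopy-extension argument to make it respect the $N$-foliation, and for a $\sim$-move given by an embedding $f:F_1\times N\to F_2\times N$ one would try to isotope $f$ to a product $f_0\times\mathrm{id}_N$. The homotopy-type invariant of $h(F)-L$ from Theorem~\ref{hinv} and the link quandle $Q(h(F),L)$ serve as bookkeeping obstructions pinning down the slice up to homotopy, while the ambiguity ``up to mirror image and orientation reversal'' is exactly the indeterminacy in choosing a co-orientation of the slice and an orientation of the $N$-factor.

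The hard part is the rigidity step: ruling out ``exotic'' virtual equivalences of the products $L_i\times N$ that are not induced by equivalences of the factors $L_i$. This is a genuine cancellation problem, and cancellation already fails for bare manifolds (there exist non-diffeomorphic manifolds $A,B$ with $A\times N$ diffeomorphic to $B\times N$), so the argument must use the special codimension-$2$, regular-neighborhood structure rather than diffeomorphism type alone. Concretely, after making a virtual equivalence transverse to a slice $F\times\{p\}\times[0,1]$ one obtains only a \emph{cobordism} between the slices of $L_1$ and $L_2$, and the crux is to promote this cobordism to a virtual equivalence; the extra $N$-directions are what a priori permit the cobordism to be nontrivial, and controlling them is where the product structure of the normal bundle of $L\times N$ must be brought to bear. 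I expect this promotion step to be the main obstacle, and it is presumably the reason the statement is posed as a conjecture rather than a theorem.
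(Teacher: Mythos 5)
The statement you are trying to prove is posed in the paper as a \emph{conjecture}; the paper offers no proof, so there is nothing to compare your argument against except the conjecture's own plausibility. More importantly, what you have written is a plan of attack rather than a proof, and by your own admission the decisive step is missing. The reduction to a cancellation problem and the idea of a ``de-spinning'' map $\Psi$ given by slicing along $F\times\{p\}\times[0,1]$ are reasonable, but $\Psi$ is not well defined on equivalence classes until you have already established the rigidity you are after: a general representative of the class $[\Phi_N(F\times[0,1],L)]$ in $V_{m+n}/\!\cong$ carries no preferred $N$-foliation, and the relation $\cong$ is generated in part by arbitrary embeddings $f:F_1\times N\to F_2\times N$, which need not be isotopic to products $f_0\times\mathrm{id}_N$. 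The paper's own theorem that $Spt_m(K)$ and $Spt_n(K)$ are virtually equivalent is proved precisely by using a non-product self-embedding (a twist of $D^2\times S^1$), and that twist changes the classical isotopy class of the slice; this shows concretely that ``isotope $f$ to a product'' fails in general and that slices of virtually equivalent products need not be virtually equivalent on the nose. Your appeal to isotopy extension to make an ambient isotopy of $L\times N$ respect the $N$-foliation has the same problem: there is no general mechanism forcing an isotopy in $F\times N\times[0,1]$ to preserve, even up to homotopy, the product foliation of the embedded submanifold.

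The step you flag as ``the hard part''---promoting a cobordism between slices to a virtual equivalence, and ruling out exotic equivalences of $L_1\times N$ with $L_2\times N$ not induced from the factors---is exactly the content of the conjecture, and your proposal does not supply an argument for it. The invariants you cite as ``bookkeeping obstructions'' ($h(F)-L$ and the quandle) cannot pin down the slice, since the paper itself emphasizes that in dimension $\geq 2$ these invariants do not determine links up to isotopy. So the proposal identifies the right difficulty but does not resolve it; as written it is a research program, not a proof, and the statement remains open.
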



\subsection{Nontrivial Virtual $2$-Knots With Infinite Cyclic knot Group}

We will now give an example of a virtual $2$-knot which is nontrivial but has the same group and quandle as the trivial knot. Such examples are known for $1$-knots, and our constructions will be based on one of them. Let $KI$ denote the Kishino knot, shown in Fig. \ref{KI}. This is a virtual knot which is distinguished from the unknot by its biquandle, \cite{Kauff2, BF}. On the other hand, as a welded knot it is trivial, as it may be unknotted using the forbidden move. Since the knot quandle and group are invariants for welded knots, this shows that the quandle and group of the Kishino knot are isomorphic to the quandle and group of the unknot.
We now wish to construct different elements of $V_2/\cong$ with the same quandle, group, and biquandle as those of the Kishino knot.
We will use a generalization of the spinning construction for classical knots, \cite{CKS}. Let $(F\times [0,1], K)$ be a virtual $1$-knot. Then we will define $S_T(F\times [0,1], K)$ to be the pair $(F\times S^1\times [0,1], K\times S^1)$. $S_T(F\times [0,1], K)$ is clearly a virtually knotted torus. The map $S_T$ is a special case of the construction defined in Section \ref{spinning}.

\begin{theorem}
$S_T$ preserves the group, quandle, and biquandle, in the sense that the knot group, quandle, and biquandle of $S_T(K)$ will be isomorphic to the group, quandle, and biquandle of $K$.
\end{theorem}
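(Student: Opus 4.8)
The plan is to reduce all three statements to a diagrammatic computation and then exploit the fact that $S_T$ sends a diagram to its product with $S^1$. Using Theorem \ref{Rthm}, I would first choose a representative $(F\times[0,1],K)$ with $K$ in general position, so that $K$ projects to a $1$-knot diagram $d$ on $F$ with finitely many transverse crossings, double point set $D$, overcrossing set $D_+$, and over/under information recorded by the coordinate $\pi_I$. Since $K\times S^1$ is embedded in $F\times S^1\times[0,1]$ by $f\times id_{S^1}\times id_{[0,1]}$, its projection under $\pi:F\times S^1\times[0,1]\to F\times S^1$ is $\pi(K)\times S^1$, with crossing set $D^*\times S^1$ and double point set $D\times S^1$. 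I would then verify that $d\times S^1$ is a genuine diagram in general position: a $1$-knot diagram has empty branch set and empty self-crossing set $N$, so $D\times S^1$ is a disjoint union of embedded circles and $S_T(K)$ has neither branch points nor triple points. Hence the diagrammatic presentations of the group, quandle, and biquandle all apply to $d\times S^1$.

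Next I would set up the combinatorial dictionary between $d$ and $d\times S^1$. Each face of $d$ is an arc $A$, and the identity $(K\times S^1 - D\times S^1)\cup(D_+\times S^1)=((K-D)\cup D_+)\times S^1$ shows that the faces of $d\times S^1$ are exactly the connected sets $A\times S^1$, giving a bijection on faces; the same product decomposition gives bijections between the sheets (semi-arcs) of $d$ and the sheets of $d\times S^1$, and between the crossings of $d$ and the double point circles of $d\times S^1$. Because $\pi_I$ ignores the $S^1$-factor, the over/under assignment at a crossing of $d$ agrees with that along the corresponding double point circle; and because the $S^1$-factor is untwisted, the co-orientations used to form the Wirtinger relations carry over unchanged. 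Consequently the presentation of $G(d\times S^1)$ has the same generators and relations $x=z^y$ as that of $G(d)$, and similarly for the quandles, so the theorem identifying $G(d)\cong\pi_1(h(F)-K)$ and $Q(d)\cong Q(h(F),K)$ yields the isomorphisms of groups and of quandles.

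For the biquandle the argument is the same in spirit: Carrell's biquandle is generated by sheets with relations $c=b_a,\ d=a^b$ at each double point curve. Under the dictionary above, the four sheets meeting along a double point circle of $d\times S^1$ are the products with $S^1$ of the four semi-arcs meeting at the corresponding crossing of $d$, with the same incoming/outgoing roles, so the two relation systems coincide and $B(F\times S^1\times[0,1],K\times S^1)\cong B(F\times[0,1],K)$. The main obstacle I anticipate lies precisely in the bookkeeping for this last step and for the co-orientation claim: one must confirm that the local model at a double point circle of a spun diagram is genuinely the crossing model of $d$ crossed with $S^1$, so that the labeling conventions (which sheets play the roles of $a,b,c,d$, and which sheet is the overcrossing one) match on the nose rather than merely up to a symmetry allowed by the biquandle axioms. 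Since the $S^1$-factor enters only as a trivial product and contributes no monodromy, I expect this to reduce to a direct inspection of the local pictures at each crossing, but it is the point at which the proof must be written with care.
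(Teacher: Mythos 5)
Your proposal is correct and follows essentially the same route as the paper: the paper's proof likewise observes that all three invariants are computed from the faces, sheets, and double point sets of a diagram, and that spinning sends each crossing of $d$ to a single double point curve where the same faces and sheets meet with the same relations. Your version simply fills in the details (general position of $d\times S^1$, the explicit bijections, and the co-orientation bookkeeping) that the paper leaves as a brief verification.
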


\bpr All three of these invariants can be computed from the faces and sheets together with the double point sets of a diagram of a knot. The construction yields the same presentations for a $1$-knot $K$ as for $S_T(K)$, since this has the same faces and sheets and each crossing in the diagram of $K$ yields a single double point curve at which the same faces and sheets meet with the same resulting relations. \epr

\begin{corollary}
$S_T(KI)$ is a virtual $2$-knot of a torus with the knot group and knot quandle of the unknot, but nontrivial biquandle.\label{2Kish}
\end{corollary}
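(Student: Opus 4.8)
The plan is to obtain this corollary as a near-immediate consequence of the preceding theorem, which asserts that the spinning construction $S_T$ preserves the knot group, quandle, and biquandle. First I would recall the established facts about the Kishino knot $KI$. As noted in the discussion of the forbidden move, $KI$ is welded-trivial; since the knot group and knot quandle are welded invariants, the group and quandle of $KI$ coincide with those of the $1$-dimensional unknot $U$ (namely $\Z$ and the trivial quandle). On the other hand, the biquandle of $KI$ is nontrivial, and this is precisely what distinguishes $KI$ from $U$ (cf. \cite{Kauff2, BF}). Both of these facts are already in hand from the earlier sections.

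Next I would apply the preceding theorem twice, once to $KI$ and once to $U$. Since $S_T$ preserves all three invariants, the group and quandle of $S_T(KI)$ are isomorphic to those of $KI$, hence to those of $U$, and therefore (applying the theorem to $U$) to those of $S_T(U)$, the trivially embedded torus. This yields the first assertion: $S_T(KI)$ has infinite cyclic knot group and the knot quandle of the trivial torus knot. For the biquandle claim I would argue by contrast: the theorem gives that the biquandle of $S_T(KI)$ is isomorphic to that of $KI$, while the biquandle of $S_T(U)$ is isomorphic to that of $U$. Because the biquandle already separates $KI$ from $U$, these two biquandles are non-isomorphic, so the biquandle of $S_T(KI)$ is not isomorphic to that of the trivial torus knot, i.e. it is nontrivial. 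Assembling these observations completes the argument.

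Since essentially all of the genuine content is carried by the preceding theorem (and by the known properties of $KI$), I do not anticipate a real obstacle here. The only point deserving a moment of care is the interpretation of \emph{the unknot}: one must confirm that $S_T(U)$ is indeed the trivial (unknotted) torus knot and that the preceding theorem applies verbatim with $U$ as a virtual $1$-knot, so that the comparison of $S_T(KI)$ with the trivial torus knot is the intended one. Both checks are routine, so the corollary follows directly.
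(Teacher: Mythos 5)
Your argument is exactly the one the paper intends: the corollary is stated as an immediate consequence of the preceding theorem (that $S_T$ preserves group, quandle, and biquandle) combined with the facts, already established in that section, that $KI$ is welded-trivial (so its group and quandle are those of the unknot) while its biquandle is nontrivial. Your extra care about identifying $S_T(U)$ with the trivially knotted torus is a harmless and sensible elaboration, but the substance matches the paper's reasoning.
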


We note that either $S_T(KI)$ provides an example of a nontrivial classical knotting of a torus with infinite cyclic knot group, or else it is a virtually knotted torus which is not realizable.

\begin{conjecture}
$S_T(KI)$ is not realizable.
\end{conjecture}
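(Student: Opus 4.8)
The plan is to argue by contradiction, using the biquandle as the distinguishing invariant, in direct analogy with the classical proof that the Kishino knot $KI$ is not a classical $1$-knot. Suppose $S_T(KI)$ were realizable. Then, by the definition of realizability and the remark following it, $S_T(KI)$ is virtually equivalent to a pair $(D^3\times[0,1],T)$ with $T$ a classically knotted torus, which (since knots in $D^4$, $\R^4$, and $S^4$ are isotopic in one iff in all) we may regard as sitting in $S^4$. Transferring invariants along this virtual equivalence: by Corollary \ref{2Kish} together with the invariance of the knot group and knot quandle under virtual equivalence, $T$ has $\pi_1(S^4-T)\cong\Z$ and the one-element knot quandle (the quandle of the standard torus, for which the peripheral subgroup is all of $\Z$); and by the theorem that $B(F\times[0,1],L)$ is a virtual invariant, together with the fact that $S_T$ preserves the biquandle, $T$ has the same biquandle as $KI$, which is \emph{not} isomorphic to the biquandle of the standard torus. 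So it suffices to prove that no classical torus in $S^4$ carrying the one-element quandle can have a nontrivial biquandle.

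I would deduce this from the following unknotting statement, which is the heart of the argument: \emph{a classically embedded torus $T\subset S^4$ whose complement has infinite cyclic fundamental group with peripheral subgroup equal to all of $\pi_1(S^4-T)$ (equivalently, whose knot quandle is the one-element quandle) is the standardly embedded torus.} Granting this, $T$ lies in a standard $\R^3\subset\R^4\subset S^4$ and hence admits a broken surface diagram with empty double point set; its biquandle is then the free biquandle on a single sheet, which is exactly the biquandle of the standard torus, contradicting the nontriviality established above. This mirrors the $n=1$ situation precisely: there, a classical knot with the one-element quandle is the unknot by the completeness of the quandle for classical $1$-knots \cite{DJ, Mat}, and hence has trivial biquandle, which is what forces $KI$ to be non-classical.

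The main obstacle is the unknotting statement itself, because the quandle, and even the group together with the first Alexander module, is \emph{not} a complete invariant for surface knots, so the clean $n=1$ argument does not transfer verbatim. I would attack it along one of three routes. First, since $\Z$ is a good group, one may try to run $4$-dimensional topological surgery to show the closed complement is homeomorphic to the standard torus complement rel its $T^3$ boundary, and then argue that the biquandle, being read off a broken surface diagram, is insensitive enough to the smooth-versus-topological distinction to conclude. Second, one may try to show such a $T$ is necessarily \emph{ribbon}; if so, the paper's classification of ribbon knots by their quandle up to connected sum with trivially knotted $S^1\times S^{n-1}$'s would force $T$ to be a standard torus up to such summands, whose effect on the biquandle one then checks to be trivial, again yielding the contradiction. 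Third, as a purely algebraic check that might shortcut the geometry, one can compare the second homology $H_2$ of the complement, or the second Alexander module $H_2$ of the infinite cyclic cover, both of which are homotopy invariants of $h(F)-L$ and hence virtual invariants by Theorem \ref{hinv}, for $S_T(KI)$ against the value forced on any classical torus. However, because the group, quandle, and first Alexander module of $S_T(KI)$ already coincide with those of the standard torus, I expect these homological invariants to agree as well; thus the biquandle mismatch is genuinely essential, and I do not expect the unknotting statement to be avoidable.
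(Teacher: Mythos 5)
The first thing to note is that the paper does not prove this statement: it is stated as an open conjecture, and the sentence immediately preceding it records exactly the dichotomy your argument tries to resolve --- either $S_T(KI)$ is a nontrivial classical knotting of a torus with infinite cyclic knot group, or it is not realizable. So there is no proof in the paper to compare against, and your write-up must be judged as a proposed resolution of an open problem.

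Your reduction is set up correctly as far as it goes: realizability would produce a smooth torus $T\subset S^4$ whose group, quandle, and biquandle agree with those of $S_T(KI)$ (via the invariance theorems and the fact that $S_T$ preserves all three), hence $T$ would have group $\Z$, one-element quandle, and the nontrivial Kishino biquandle. But the gap is exactly where you locate it, and it is fatal: the ``unknotting statement'' you need --- that a smoothly embedded torus in $S^4$ with infinite cyclic knot group and full peripheral subgroup is smoothly standard --- is itself a well-known open problem in four-dimensional topology, not obviously easier than the conjecture you are trying to prove, and none of your three routes closes it. Route one: topological surgery with the good group $\Z$ could at best identify the complement in TOP, but (i) the paper itself cites Gordon's examples showing that in dimension four the complement does not determine the knot, so identifying complements is insufficient even in TOP, and (ii) the biquandle is computed from a generic smooth projection, so a TOP-standard torus that is not smoothly standard gives you no control over its broken surface diagrams; ``insensitive enough to the smooth-versus-topological distinction'' is precisely the assertion you would have to prove. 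Route two: there is no known reason a torus with group $\Z$ must be ribbon, and even granting that, the paper's classification of ribbon knots by their quandle holds only up to weak stable equivalence, i.e.\ up to connected sums with trivially knotted $S^1\times S^{n-1}$, which changes the genus and hence the diagram; you would still need to show the biquandle cannot distinguish $T$ from a stabilized standard surface, which is established nowhere. Route three you concede yourself. The net effect is that your proposal reduces one open conjecture to another open conjecture plus an unproven biquandle comparison, rather than proving the statement.
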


\section{Gauss Codes for Higher Dimensions}
\label{s-gauss-higher-dim}
%

For a classical link, a Gauss code consists of a space $X$ of disjoint copies of $S^{1}$, together with ordered triples $(x,y,z)$, where $x,y$ are a pair of points in $X$ and $z$ is a $+$ or $-$ sign. Any link diagram with $n$ components gives rise to a Gauss code by letting $X$ be the space consisting of $n$ copies of the circle, and introducing a triple $(x,y,z)$ for each crossing in the diagram, where $x$ is the point of the overcrossing, $y$ is the point of the undercrossing, and $z$ is the sign of the crossing. These may be shown on a diagram by drawing an arrow from $x$ to $y$ and labeling the head of the arrow with the sign of the crossing. A Reidemeister move on the diagram changes the Gauss code in a local way, by introducing an isolated triple (1st Reidemeister move), introducing two parallel triples of opposite sign (2nd Reidemeister move), or rearranging three triples (3rd Reidemeister move).

By allowing arbitrary Gauss diagrams modulo arbitrary changes of the form determined by the Reidemeister moves, we obtain the virtual links of Kauffman, \cite{Kauff}.

\begin{figure}
		\centering
			\includegraphics[scale=0.3]{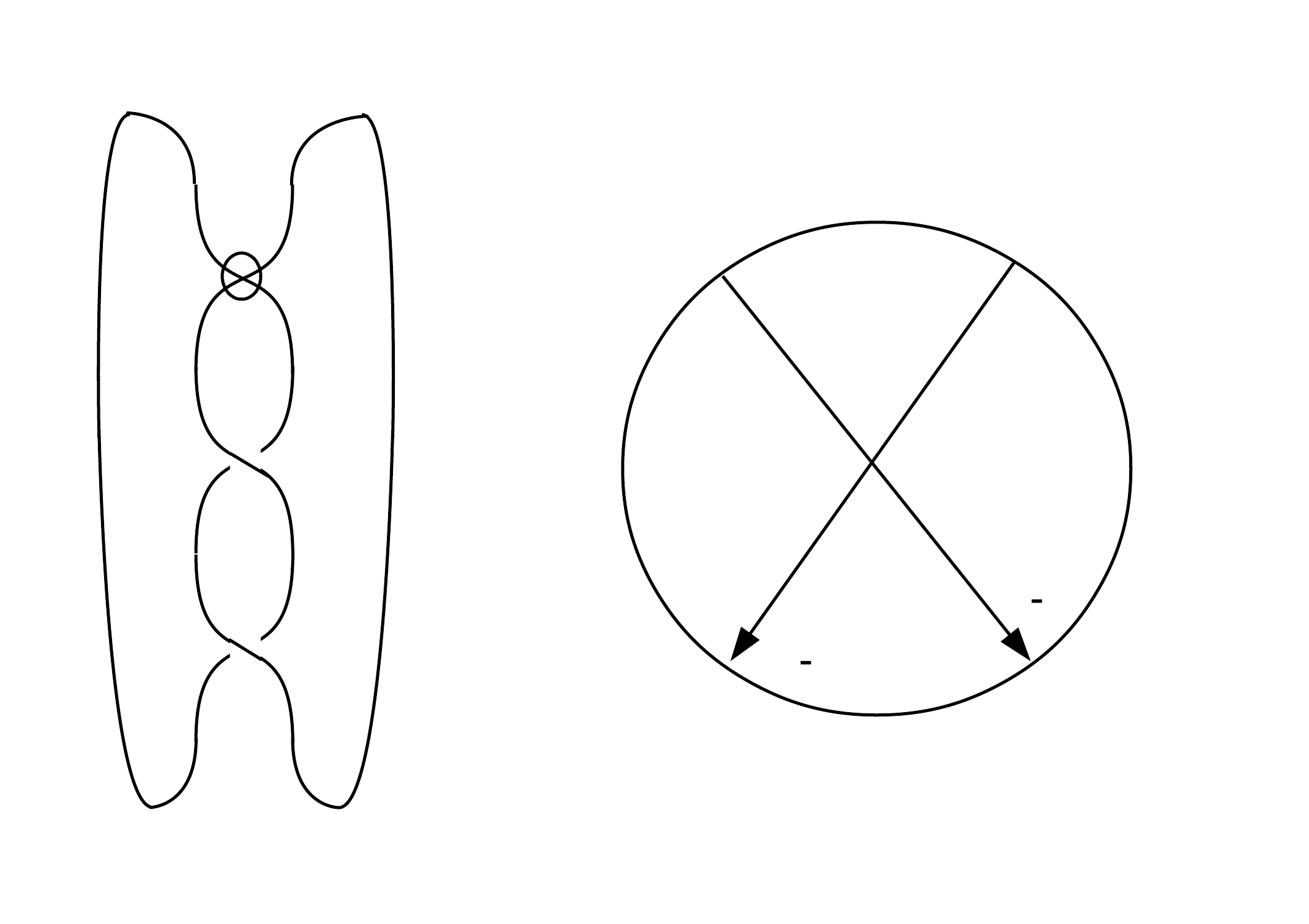}
		\caption{A virtual knot diagram and its Gauss code.}
		\label{gaussex}
\end{figure}

We will now generalize this to $2$-links. Let $L$ be a $2$-link of $X$ in $F\times [0,1]$, in general position with respect to the canonical projection $\pi:F\times [0,1]\rightarrow F$. Then by labeling $\pi(L)$, we can produce a broken surface diagram on $F$. Corresponding to each labeled double point curve in the broken surface diagram, there will be two curves on $X$, and we may pair up these curves, marking which is the over-crossing and which is the under-crossing. We must also mark in which direction the normal vectors of the surfaces point, as indicated in Fig. \ref{gausssurface}. There will also be cusp points and triple points. The triple points will lift to places on $X$ where double point curves cross one another, and if two double point curves cross there must be a third double point curve which shares that triple point. A surface $X$ together with curves marked in this manner gives us an object analogous to a Gauss code for a classical link. Changing the broken surface diagram by a Roseman move will cause a local change to the marked curves on $X$.

Locally, any double point curve may be indicated by noting which sheets meet at the double point curve, which is the over-crossing and which is the under-crossing, and marking in which direction the normal vectors to each sheet are pointing.

\begin{figure}
		\centering
			\includegraphics[scale=0.5]{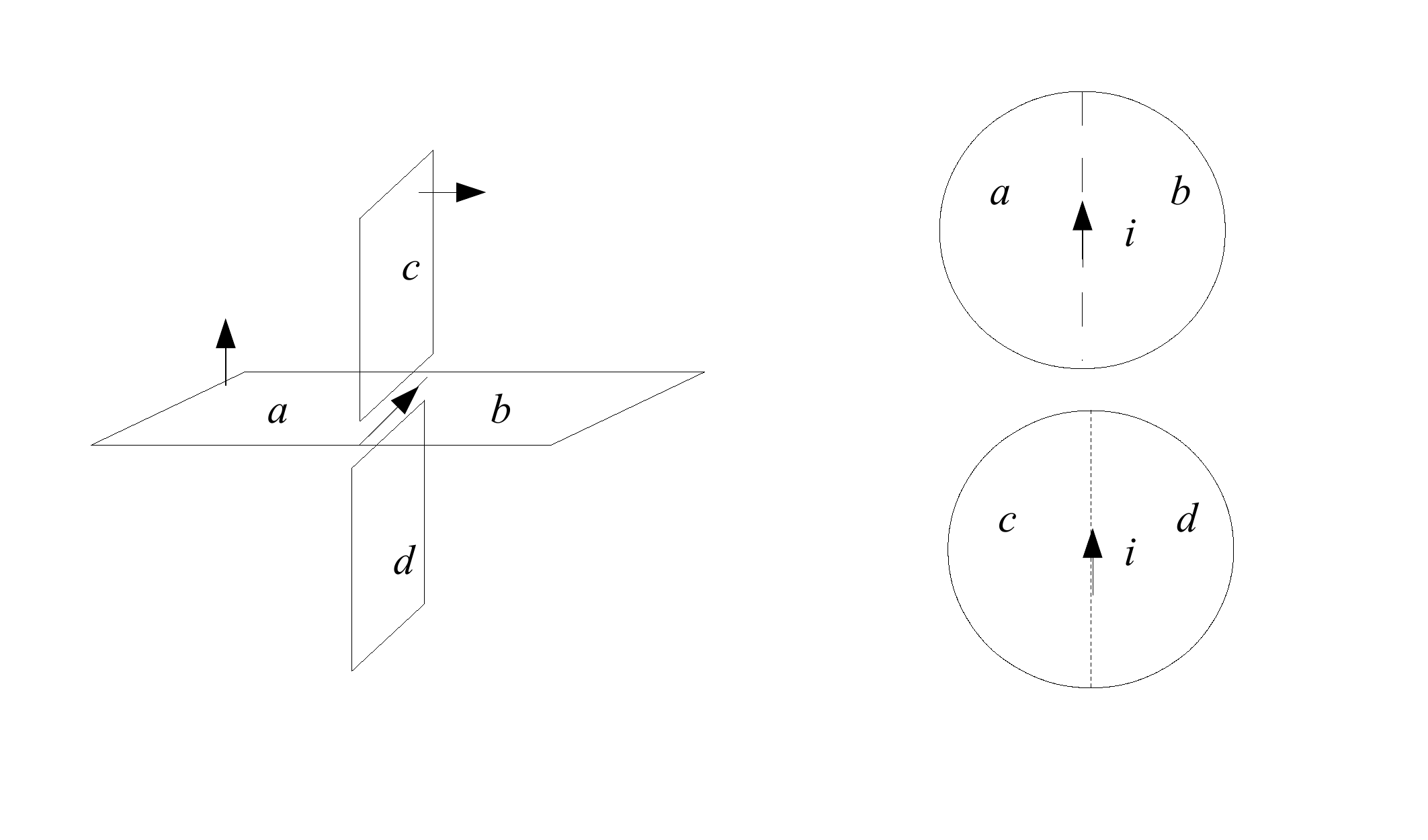}
		\caption{On the surface $X$ we mark the curves where the double point curve lifts. One curve comes from the over-crossing part and one from the under, and they are labeled as coming from the same double point curve with the letter \emph{i}. The overcrossing is marked with the larger dashed curve. Arrows are placed on each curve to determine which direction to pair them together, with the convention that the triple consisting of the normal vector to the overcrossing sheets, normal vector to the undercrossing sheets, and the arrow on the curves, should form a positive basis.}
		\label{gausssurface}
\end{figure}

Suppose now that instead of starting with a broken surface diagram of a knotted surface in four dimensions and lifting the double point curves, we instead consider simply a surface $X$ together with a collection of curves which \emph{locally} could be the lift of the double point curves of a broken surface diagram of a knotted surface. That is, we consider $X$ with a collection of curves which are paired together, with one curve in each pair marked as an \emph{overcrossing}, with the rule that curves may only terminate in cusps and that if two double point curves cross, there is a third curve which shares the triple point. The surface $X$ together with these marked curves will be termed a \emph{(virtual surface) Gauss code}. Although \emph{locally} such a diagram may be obtained as the lift of a neighborhood of a broken surface diagram on a disk, it may not be possible to obtain it in this manner globally.

\begin{definition}
A virtual surface Gauss code is a closed surface $X$ with a set $D$ of marked curves obeying the following conditions.

1. Each curve is closed or terminates in a cusp.

2. Each curve in $D$ is paired with a unique second curve in $D$, and one curve in the pair is marked as \emph{over}. Arrows are placed on each curve in the pair. The normal vectors of the two surfaces are also marked as in Fig. \ref{gausssurface}.

3. Two curves that terminate in a single cusp are paired together, with arrows both pointing toward or both pointing away from the cusp.

4. If two curves cross, then the curves they are paired with cross as well (that is, triple points appear three times on $X$).
\end{definition}

This may be succinctly summarized by stating that $X$ is marked with curves in the set $D$ such that \emph{locally} the set $D$ is the lift of some neighborhood of a broken surface diagram on a disk. Alternatively, $X$ is the surface Gauss code for a broken surface diagram on an arbitrary $3$-manifold.

For broken surface diagrams, there exists a set of seven moves, the Roseman moves, which suffice to generate any isotopy of the surface. Each move creates a localized change to the Gauss code. Our \emph{virtual Gauss code surface link} may thus be taken modulo such moves, where we allow such moves to be performed regardless of whether they would be ``realizable'' in the broken surface diagram. Such changes to a Gauss code will be termed \emph{Gauss-Roseman moves}.

Note that these Gauss-Roseman moves are in fact all changes to the Gauss code which take a neighborhood that is locally the lift of some broken surface diagram for a knotted surface and change that neighborhood by a Roseman move.

%

\section{Broken Surface Diagrams for Virtual Surface Links}
\label{s-broken-surf}
%

At this point we will restrict our consideration to $V_{2}$.
Given a $2$-dimensional surface embedded in $F^{3}\times [0,1]$, it is possible to develop a diagrammatic expression for this embedding analogous to a classical link diagram by labeling a projection of the surface onto $F$, in the manner described by Roseman, \cite{Roseman1, CKS}. Such diagrams are called \emph{broken surface diagrams}, which allow the study of surfaces in four dimensions to be studied via combinatorial methods. We will review the basics of such diagrams for classical $2$-links, and then discuss how such diagrams may be generalized to incorporate virtual $2$-links.

\begin{remark}
J. Schneider has also considered generalizations of broken surface diagrams allowing virtual crossings, \cite{Schneider}. His use of virtual broken surface diagrams is as a combinatorial generalization of broken surface diagrams. This reference also considers an equivalence relation consisting only of virtualized versions of the Roseman moves, and does not appear to allow the \emph{graph changes} which we also permit. Therefore, this appears to be a distinct approach to generalizing $2$-links to virtual $2$-links. There does not appear to be an a priori reason to expect these two approaches to be equivalent.
\end{remark}

In order to form a broken surface diagram for a $2$-link $L$ in $F^{3}\times [0,1]$, consider the canonical projection coming from the Cartesian product $\pi:F^{3}\times [0,1]\rightarrow F^{3}$. By performing an arbitrarily small isotopy, we may place $L$ in general position with respect to $\pi$.

\begin{theorem}\cite{Roseman1}
Given a $2$-link $L$ in general position in $F^3\times [0,1]$, the projection $\pi(L)$ will be embedded except at finitely many umbrella points, finitely many triple points, and double point curves, by. The double point curves are either closed or terminate in umbrella points.
\end{theorem}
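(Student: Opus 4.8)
The plan is to read off the local and global structure of the projection $\pi(L)$ directly from the six general-position conditions specialized to $n=2$, which we may assume hold for $L$ by Theorem \ref{Rthm}. Since $F$ is compact and $L$ is a closed surface, all of the relevant singular sets will be compact, and this is what will deliver the finiteness statements. Throughout, the guiding principle is that the generic singularities of a smooth map of a surface into a $3$-manifold are precisely double point curves, triple points, and Whitney-umbrella points, and the general-position axioms are exactly the hypotheses that force this picture.

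First I would isolate the immersion locus. By definition the branch set $B$ is the set where $\pi|_L$ fails to be an immersion, and by general-position condition (1) $B$ is a closed $0$-dimensional submanifold of $L$; compactness of $L$ then forces $B$ to be finite. Away from $B$ the map $\pi|_L$ is an immersion, so its only remaining failures of injectivity are self-intersections. By condition (2) the double point set $D\subset L$ is a union of immersed closed $1$-dimensional submanifolds with normal crossings, so away from its self-crossings two local sheets of $L$ meet transversely along a curve; projecting, each such curve maps to a \emph{double point curve} in $F^3$ along which exactly two sheets of $\pi(L)$ cross. This accounts for the generic, codimension-one part of the singular image.

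Next I would locate the triple and umbrella points. The self-crossing set $N$ of $D$ is, for $n=2$, the transverse intersection inside the surface $L$ of two $1$-dimensional branches of $D$, hence a $0$-dimensional and (by compactness) finite set; at a point of $N$ two branches of the double curve cross, three sheets of $L$ project to a common point, and $\pi$ maps a neighborhood to a \emph{triple point} of $\pi(L)$ where three double point curves meet. Conditions (4)--(6) ensure that $B$, $N$, and the various crossing sets are mutually transverse, so that the umbrella points, triple points, and double curves occur at distinct locations and the umbrella and triple points are isolated. At a point $b\in B$ the map $\pi|_L$ has a Whitney-umbrella singularity: condition (3) says a neighborhood $U$ of $b$ in $D$ has $U-B$ equal to two components $U_0,U_1$ with $\pi(U_0)=\pi(U_1)$, which is exactly the assertion that the two halves of the double curve emanating from $b$ in $L$ are folded together by $\pi$ onto a single double point curve in $F^3$ \emph{terminating} at the \emph{umbrella point} $\pi(b)$.

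Finally, the global dichotomy follows from the structure of $D$ as a closed $1$-manifold in $L$: deleting the finitely many points of $N$ and $B$ cuts $\pi(D)$ into arcs and circles, where a component meeting no point of $B$ closes up into a circle while a component adjacent to $B$ runs into an umbrella point. Hence every double point curve is either closed or terminates in an umbrella point, and these curves, together with the finitely many triple and umbrella points, are the only loci where $\pi(L)$ fails to be embedded. The main obstacle I anticipate is pinning down the normal form at the branch set: one must verify that condition (3) genuinely forces the Whitney-umbrella model (equivalently, appeal to Whitney's classification of stable singularities of maps $\Sigma^2\to M^3$), and hence that the double point curve truly \emph{ends} at $\pi(b)$ rather than passing through or branching there; the remaining arguments are bookkeeping with transversality and compactness.
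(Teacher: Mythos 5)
Your derivation is correct, but note that the paper itself supplies no proof of this statement: it is quoted as a known result of Roseman, with the citation to \cite{Roseman1} doing all the work. What you have done is reconstruct the content of that citation directly from the six general-position axioms specialized to $n=2$, and the reconstruction is sound: condition (1) plus compactness of $L$ gives finiteness of $B$; condition (2) gives that $D$ is a union of immersed closed curves with normal crossings, whose self-crossing set $N$ is $0$-dimensional and hence finite, yielding finitely many triple points (each appearing as three points of $N$); and condition (3) is precisely the statement that the two local branches of $D$ at $b\in B$ fold onto a single image arc terminating at $\pi(b)$, which settles both the local picture at umbrella points and the global dichotomy for double point curves. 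Two small points are worth making explicit. First, the absence of points of multiplicity four or higher is not something you state, but it follows from the normal-crossings requirement: four $2$-planes in a $3$-manifold cannot meet in general position at a point, so general position excludes quadruple points outright. Second, the obstacle you flag --- that condition (3) must be shown to force the Whitney-umbrella normal form --- is less of an obstacle than you suggest for the theorem as stated, since the statement only requires that the double point curve terminate at $\pi(b)$, which condition (3) gives directly without invoking Whitney's classification; the full normal form is only needed if one wants the stronger local model. So your argument buys a self-contained proof from the axioms already present in the paper, at the cost of some bookkeeping the paper avoids by deferring to Roseman.
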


\begin{definition}
A \emph{broken surface diagram} on $F^{3}$ is a surface in $F^{3}$ which is embedded except at finitely many umbrella points, finitely many triple points, and a set of double point curves, where each double point curve is marked with over/under information.
\end{definition}

These broken surface diagrams are analogous to classical link diagrams on surfaces. They are also the same as the diagrams defined by Roseman, \cite{Roseman2}, in dimension $4$.

The usefulness of broken surface diagrams is not only in allowing us to express surfaces in four dimensions through diagrams in three dimensions, but also in the existence of a complete set of moves on these diagrams called the \emph{Roseman moves}, which are analogous to the Reidemeister moves for classical link diagrams. These moves are shown in Fig. \ref{Rose}. Their completeness is proved in \cite{Roseman}, with a more general proof applicable to any dimension found in \cite{Roseman2}.

\begin{figure}
		\centering
			\includegraphics[scale=0.5]{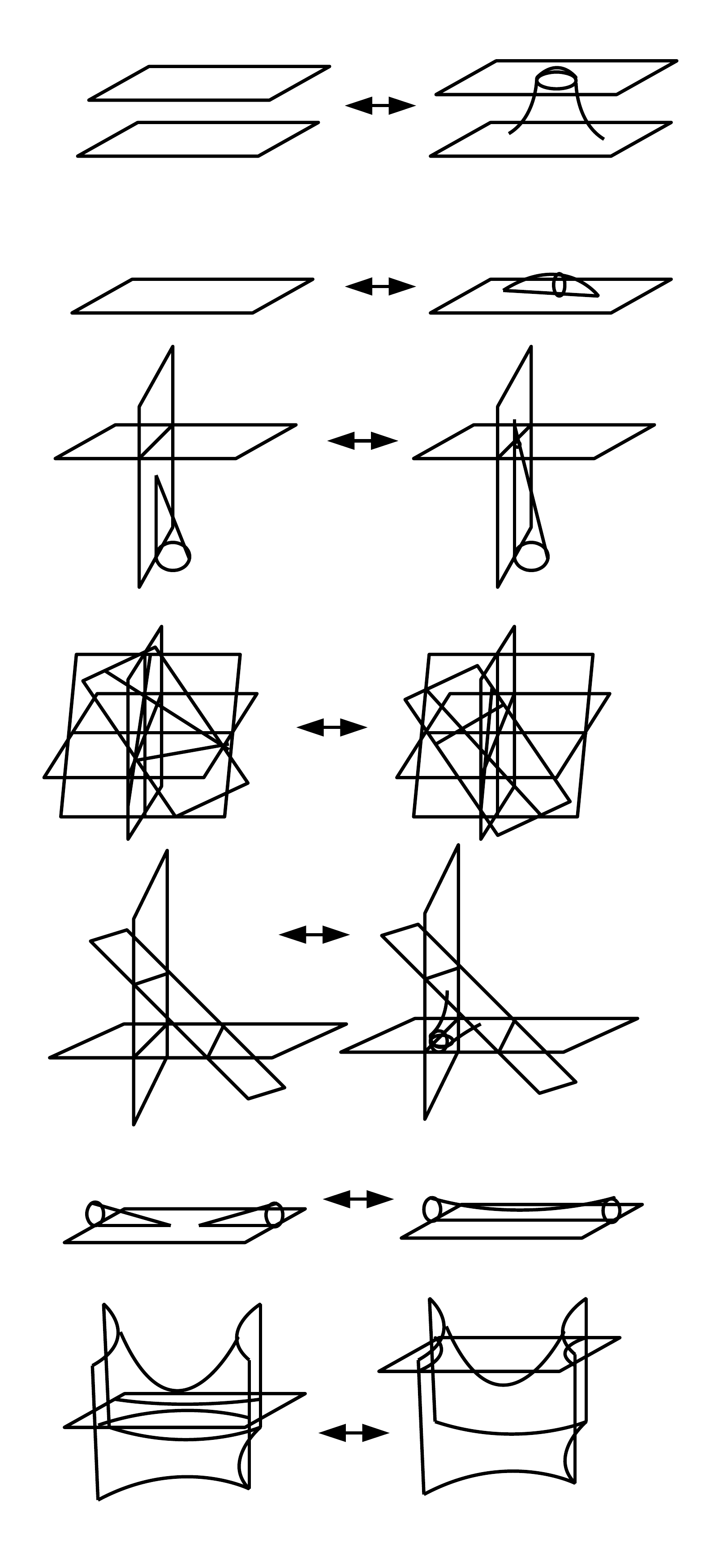}
		\caption{The seven Roseman moves for broken surface diagrams. Over/under and virtual markings on the double point curves must be consistent before and after the move.}
		\label{Rose}
\end{figure}

Now, for $V_{1}$, it is possible to extend classical link diagrams on $S^{2}$ to allow us to describe arbitrary virtual $1$-links. Given a pair $(F^{2}\times [0,1], L)$, we may represent $L$ by a link diagram on $F$. For each crossing in $\pi(L)$, we put a crossing of the same type in a diagram on $S^2$. Then we join together the ends of the segments around each crossing so that we get the same set of arcs as we have in $\pi(L)$, allowing the curves joining the crossings to pass through one another only in virtual crossings. This algorithm involves many choices: we must choose the position for each of the crossings in the diagram, and we can choose different arcs to connect them. However, it is shown in \cite{KamaKama} that the results will always be equivalent under virtual Reidemeister moves. In brief, the proof requires two observations: first, given two choices for locating the crossings on $S^2$, we may isotope these crossings so that their locations agree. This is easy to do since each crossing can be thought of as lying inside a small disk on $S^2$, and we need only slide and rotate the disks so that they agree. On the other hand, if the crossings are located in the same places, then any set of arcs joining the crossings are related by some sequence of virtual replacements. It is furthermore shown that two virtual link diagrams which represent equivalent elements of $V_1$ are related by a finite sequence of local Reidemeister moves together with allowing a semi-arc with only virtual crossings to be replaced by another semi-arc with the same endpoints and only virtual crossings.

We may generalize broken surface diagrams on $S^{3}$ by allowing for double point curves which are marked as virtual instead of being marked with over/undercrossing information. Such diagrams are \emph{virtual broken surface diagrams}, which we may abbreviate for convenience as \emph{VBSDs}. We consider these diagrams modulo the following moves. Given a broken surface diagram $K$, let $\Sigma$ be a subsurface (with boundary) of $K$ such that all the double point curves on $\Sigma$ are virtual crossings or meet $\Sigma$ transversely. Then we may replace $\Sigma$ by another surface $\Sigma'$ with the same genus and boundary as $\Sigma$ and replace $K$ by $K'=(K-\Sigma)\cup \Sigma'$, where all double point curves which meet $F'$ are marked as virtual. Such an adjustment will be called a \emph{virtual replacement}. In addition we will allow a local neighborhood of the diagram involving only classical crossings (that is, crossings marked with over/under information) to be changed by a classical Roseman move.

Instead of considering virtual replacements, one may instead wish to find a local set of moves on virtual crossings which results in the same set of equivalences between diagrams. To do so, we will define the \emph{virtual Roseman moves} to be any local modification of the diagram in which we perform a classical Roseman move, except that one of the disks involved in the classical Roseman move has only virtual crossings with the other disks, both before and after the move is performed.

\begin{theorem}
Two virtual broken surface diagrams are equivalent modulo Roseman moves and virtual replacements iff they are equivalent modulo the virtual and classical Roseman moves.
\end{theorem}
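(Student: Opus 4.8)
The plan is to prove the two inclusions of equivalence relations separately, exploiting the fact that classical Roseman moves appear in both collections of moves. Thus the theorem reduces entirely to comparing the local \emph{virtual Roseman moves} with the a priori global \emph{virtual replacements}.

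First I would dispatch the easy direction: every virtual Roseman move is itself a virtual replacement, so the equivalence relation generated by the virtual and classical Roseman moves is contained in the one generated by classical Roseman moves and virtual replacements. Indeed, a virtual Roseman move is a classical Roseman move performed in a ball in which one participating sheet $\Delta$ meets the others only in virtual double point curves, both before and after. Taking $\Sigma$ to be the disk of $\Delta$ involved in the move, all double point curves on $\Sigma$ are virtual, and the move replaces $\Sigma$ by a disk $\Sigma'$ of the same genus and the same boundary (the diagram being unchanged outside the ball), with all new double point curves virtual; this is precisely a virtual replacement. Since classical Roseman moves are literally common to both collections, the inclusion follows.

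The substance of the theorem is the converse: an arbitrary virtual replacement must be realized by a finite sequence of virtual and classical Roseman moves. Let the replacement swap $\Sigma$ for $\Sigma'$, with $\partial\Sigma=\partial\Sigma'$, with $\Sigma$ and $\Sigma'$ of equal genus, and with every double point curve interior to $\Sigma'$ virtual. The idea is to realize the replacement as an ambient isotopy of the moving subsurface and then apply the Roseman decomposition of an isotopy into local moves. Because the moving surface is to cross $K-\Sigma$ only virtually, I would first push $\Sigma$ slightly in the $[0,1]$-direction so that, away from $\partial\Sigma$, it is disjoint from $K-\Sigma$ in $F^3\times[0,1]$ and its projection meets $\pi(K-\Sigma)$ only in virtual double point curves. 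Two claims then finish the proof. The first is that, since $\Sigma$ and $\Sigma'$ share genus and boundary, they are ambient isotopic rel $\partial$ through surfaces all of whose crossings with $K-\Sigma$ remain virtual: by the classification of compact surfaces the two fillings are diffeomorphic rel $\partial$, and because the extra collar coordinate removes all embedding obstructions between the moving surface and $K-\Sigma$, this diffeomorphism is realized by an honest ambient isotopy supported away from $K-\Sigma$ save for virtual crossings. The second is that, putting this isotopy in general position with respect to $\pi$ and invoking Roseman's theorem (the local-move theorem underlying Fig.~\ref{Rose} and Theorem~\ref{Rthm}), a generic such isotopy alters the broken surface diagram only by passing through finitely many elementary catastrophes, each being one of the seven Roseman moves performed in a disk. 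Since the moving sheet meets the others only in virtual crossings throughout, each such catastrophe is by definition a virtual Roseman move, and concatenating them expresses the virtual replacement as a finite sequence of virtual Roseman moves.

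The hard part will be the first of these two claims together with the genericity control in the second: namely, showing that equality of genus and boundary is enough to make $\Sigma$ and $\Sigma'$ isotopic rel $\partial$ \emph{within the virtual regime}, and that the interpolating isotopy never forces a genuinely classical (over/under, transverse) crossing of the moving sheet with $K-\Sigma$. The delicate point is that a generic isotopy could momentarily attempt to create such a non-virtual crossing; one must verify that by routing the moving surface through the extra collar coordinate every crossing it makes with $K-\Sigma$ can be kept virtual, so that no classical Roseman move involving the moving sheet is ever required. Making precise the ``no embedding obstruction'' heuristic — that a surface confined to a collar and permitted to cross the rest of the diagram only virtually behaves like a surface properly embedded in a $4$-ball with prescribed boundary and genus, for which uniqueness up to isotopy rel $\partial$ holds — is the crux of the argument, and is the two-dimensional analog of the fact that, for $n=1$, any two arcs with the same endpoints and only virtual crossings are related by virtual Reidemeister moves.
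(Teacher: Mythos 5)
Your proposal follows essentially the same route as the paper's proof: the easy direction is observed to be immediate, and a virtual replacement is realized by an ambient isotopy sliding the moving subsurface to its new position, which in general position degenerates only at finitely many times, each a Roseman move that is virtual because the moving sheet crosses the rest of the diagram only virtually. The only difference is that you explicitly flag (without resolving) the existence of an interpolating isotopy that stays in the virtual regime, a point the paper's proof asserts without further justification.
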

\bpr It is immediate that any two diagrams which are equivalent under the virtual and classical Roseman moves must be equivalent under the classical Roseman moves together with virtual replacements. It remains to show that any virtual replacement can be accomplished using the virtual Roseman moves. The two diagrams differing by a virtual replacement are related by a homotopy which slides the disk that undergoes virtual replacement from its old position to its new position. This homotopy will leave the immersion of the broken surface generic except at a finite collection of times, by the same argument which shows that a generic isotopy of a surface in $\mathbb{R}^4$, projected into three dimensions, leaves a generic immersion in $\mathbb{R}^3$ except at a finite collection of times, \cite{Roseman, Roseman2}. By Roseman's argument for the completeness of the Roseman moves, we may assume that such non-generic immersions are of the sort encountered in the Roseman moves. The only difference will be that in our case, one of the disks involved will have only virtual crossings instead of marked crossings. \epr

Now we wish to establish a relationship between our $V_2/\cong$ and these virtual broken surface diagrams. Let us consider the approach taken for $1$-links by Kamada and Kamada \cite{KamaKama} as summarized above. The first step is to show that there is a way to represent virtual $2$-links by using VBSDs, after which we may consider whether this map respects virtual equivalence. Therefore, let $(F\times [0,1], K)$ be a virtual $2$-link. Consider the broken surface diagram $d$ on $F$. This will have a collection of double point curves which either form closed circles or line segments that terminate in umbrella points, \cite{Roseman, Roseman2, CKS}. We wish to form a VBSD on $S^3$ with the same collection of double point curves and umbrella points. Placing a double point curve in $S^3$ amount to placing a framed link and some framed curves with ends; the framing determines how the sheets and faces intersect at the double point curve. To see this it suffices to consider that, for a point on a double point curve that is not an umbrella point: in a ball neighborhood of that point, two surfaces intersect one another. Triple points correspond to places where two of these framed curves cross one another transversely. Once we have established where the double point curves lie, we need only place surfaces connecting the appropriate double point curves together. The choices involved in placing the surfaces are all related by virtual replacements.

At this point we encounter an ambiguity in our choices which cannot be resolved by an isotopy in $S^3$ or a virtual replacement. The problem, which we do not encounter in the case of $1$-links, is that we have a choice in how to place the double point curves in $S^3$. The set of double point curves really defines a framed graph $\Gamma \subset S^3$, and there are many choices of framings and of isotopy classes of the graph. There is no reason to expect that we can pass between these choices using virtual Roseman moves. Therefore, if we wish to unambiguously use VBSDs to represent virtual $2$-links, we need to allow one more type of equivalence on top of the virtual Roseman moves. We must also allow ourselves to change the framing of a double point curve that is induced by the surfaces intersecting along the curve, and to pass double point curves through one another.

\begin{definition}
A \emph{graph change} to a VBSD is a change to the diagram which does not change the Gauss code of the double point curves, but which either passes a double point curve through another double point curve (or through itself), or changes the framing induced by the two intersecting surfaces along the double point curve.
\end{definition}

\begin{conjecture}
Virtual broken surface diagrams modulo the surface diagram Roseman moves, virtual Roseman moves, and graph changes represent virtual $2$-links.
\end{conjecture}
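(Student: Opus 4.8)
The plan is to mimic the Kamada--Kamada argument from the $n=1$ case by constructing a pair of mutually inverse maps $f$ and $g$ between $V_2/\cong$ and the set of VBSDs modulo Roseman moves, virtual Roseman moves, and graph changes. First I would define the forward map $f$. Given $[F\times[0,1],K]\in V_2/\cong$, choose a representative with $K$ in general position with respect to $\pi:F\times[0,1]\to F$, yielding a broken surface diagram $d$ on $F$. As discussed above, the double point curves of $d$ assemble into a framed graph $\Gamma$ whose vertices are the triple and umbrella points. I would embed a copy of $\Gamma$, with its induced framing, into $S^3$, glue in sheets realizing the faces of $d$ and joining the appropriate double point curves, and mark as \emph{virtual} every double point curve forced by letting disjoint faces of $d$ cross in $S^3$. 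This produces a VBSD. The content of the earlier discussion is exactly that the choices of connecting sheets are related by virtual replacements, hence by virtual Roseman moves via the theorem equating the two, while the choices of embedding and framing of $\Gamma$ are related by graph changes.

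Next I would define the backward map $g$. Given a VBSD on $S^3$ with underlying surface $X$, take a closed regular neighborhood $W$ of $X$ in $S^3$, a compact $3$-manifold with boundary. Along each virtual double point curve I would perform the $2$-dimensional analog of the arch construction of Fig.~\ref{arch}: instead of letting the two sheets intersect, I reroute $W$ so that the two sheets lie in disjoint portions of a modified $3$-manifold $F'$, trading each virtual crossing for extra handle topology of $F'$, while leaving the classical over/under double point curves untouched. The result is a genuine broken surface diagram with only classical crossings on $F'$, hence an abstract $2$-link $(F'\times[0,1],L)\in V_2$, whose class in $V_2/\cong$ I assign as $g$ of the diagram.

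I would then check that both maps descend to the quotients. For $g$, each classical Roseman move lifts to an isotopy of $L$ in $F'\times[0,1]$; each virtual Roseman move, having one disk meeting the others only virtually, alters $F'$ by an embedding of the type appearing in the relation $\sim$, hence fixes the class in $V_2/\cong$; and each graph change rearranges only virtual data, so it too changes $(F'\times[0,1],L)$ only by a virtual equivalence. For $f$, I would verify that isotoping $K$ (equivalently, a Roseman move on $d$) yields a classical Roseman move on the VBSD, and that replacing $(F\times[0,1],K)$ by a $\sim$-equivalent pair changes neither $\Gamma$ nor the gluing data beyond a graph change or virtual replacement.

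The final and hardest step is mutual invertibility. The composite $g\circ f$ recovers the original virtual $2$-link essentially by construction, since resolving all virtual double point curves rebuilds a $3$-manifold into which $\pi(K)$ and its faces embed as in $d$; this is the analog of the easy direction in the $n=1$ theorem. The difficulty lies in $f\circ g$: showing that two VBSDs producing virtually equivalent $2$-links already differ by the three allowed move types. Unlike the $n=1$ situation, where any two families of arcs with fixed endpoints on a surface are related by virtual replacements, here one must simultaneously control (i) the sheets filling in between fixed double point curves and (ii) the placement of the framed graph $\Gamma$ in $S^3$. I expect the main obstacle to be establishing that graph changes suffice to pass between all embeddings and framings of $\Gamma\subset S^3$ that realize the same Gauss code of double point curves: this is a statement about framed spatial graphs with fixed combinatorics, and it is not clear that crossing changes and framing changes of double point curves generate every such ambiguity without spawning new triple- or umbrella-point data. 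Closing this gap is precisely what separates the present statement from a theorem.
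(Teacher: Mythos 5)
The statement you are trying to prove is stated in the paper as a \emph{conjecture}; the paper offers no proof, and the surrounding discussion explicitly identifies the same obstruction you run into. So the honest assessment is that your proposal is a strategy outline, not a proof, and you say as much yourself in your final paragraph. The gap is real and it is exactly the reason the paper leaves the statement open: there is no argument that the three permitted move types (classical Roseman, virtual Roseman, graph changes) generate \emph{all} of the ambiguity in the correspondence. In particular, for the composite $f\circ g$ you would need to show that any two embeddings of the framed graph $\Gamma$ of double point curves into $S^3$ realizing the same Gauss-code data are connected by a sequence of graph changes that never creates or destroys triple points or umbrella points in an uncontrolled way, and that the sheet-filling ambiguity is exhausted by virtual replacements even in the presence of triple points where virtual and classical double point curves interact. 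Neither of these is established, and the paper's companion conjecture (that graph changes are not consequences of the Roseman and virtual Roseman moves) signals that the interaction between these move types is not understood.

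There is also an unaddressed issue in your backward map $g$. At an isolated virtual crossing of arcs the arch construction of Fig.~\ref{arch} is local and canonical, but rerouting a regular neighborhood $W\subset S^3$ along an entire virtual double point \emph{curve} is a global operation: the curve may be closed with nontrivial framing, may pass through triple points where it meets classical double point curves, and may intersect other virtual curves. You would need to verify that the rerouted $3$-manifold $F'$ and the resulting classical broken surface diagram are well defined up to the equivalence $\cong$ on $V_2$ independently of the order and manner in which the virtual curves are resolved, and that the result is insensitive to a graph change applied before resolving. None of this is automatic, and it is intertwined with the first gap. Until both are closed, the statement remains a conjecture, as the paper presents it.
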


\begin{conjecture}
Graph changes cannot in general be obtained using Roseman moves and virtual Roseman moves.
\end{conjecture}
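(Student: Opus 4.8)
The plan is to construct an invariant of virtual broken surface diagrams that is preserved by the classical and virtual Roseman moves but is genuinely sensitive to the ambient embedding of the double point curves, and then to exhibit an explicit pair of diagrams related by a single graph change on which the invariant takes different values. Granting the preceding conjecture, a graph change does not alter the virtual link represented, so the sought-after invariant must be a \emph{diagrammatic} quantity that is \emph{not} a virtual-link invariant: it should record geometric information about the framed graph $\Gamma\subset S^3$ formed by the double point curves. The most natural candidates are the self-framing of a closed double point curve, induced by the two intersecting sheets, and the pairwise linking numbers of distinct closed double point curves. A graph change is designed precisely to change one of these numbers, by passing a curve through another or by inserting a twist, while leaving the Gauss code fixed.

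First I would verify that the ``easy'' Roseman moves preserve these numbers. The birth and death of a double point circle, together with the analogous cancellation moves, insert or delete split, unknotted, $0$-framed circles inside a ball; the triple point slides and branch point moves are likewise supported in a ball and rearrange $\Gamma$ without passing any two double point curves through one another. Each of these leaves self-framings and pairwise linking numbers unchanged. The virtual Roseman moves must be checked separately: because a virtually crossing sheet passes freely through the others, one has to confirm that the virtual double point curves it creates are still recorded by the Gauss code and cannot serve as a hidden channel through which a classical double point curve is dragged across another or acquires a twist.

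The principal obstacle is the quadruple point (tetrahedral) Roseman move. In the standard four-plane local model this move slides one double point curve transversally across another, so that a pair of disjoint double point curves, say $\{1,2\}$ and $\{3,4\}$, meet once and separate again; on closed curves this is a crossing change and it alters their linking number by $\pm 1$. Hence raw pairwise linking is \emph{not} invariant. The essential observation, however, is that the quadruple point move performs this crossing change only while \emph{simultaneously} reordering the triple points along the double curves, so it also changes the Gauss code, whereas a graph change alters the linking with the Gauss code held fixed. I would therefore aim to define a corrected invariant $I = (\text{linking or framing of }\Gamma) - (\text{a signed count read off the triple-point ordering in the Gauss code})$, tuned so that the geometric jump and the combinatorial jump cancel under the quadruple point move, and under the remaining moves, but do not cancel under a graph change. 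For the self-framing variant I expect the combinatorial correction to be governed by the branch points, via the relation tying the total double-curve framing to the normal Euler number; for pairwise linking it should be governed by the triple points.

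With such an $I$ in hand, the argument finishes with an explicit example: two sheets meeting along a pair of disjoint, triple-point-free circles, arranged as a split unlink in one diagram $D$ and as a Hopf link in a second diagram $D'$ obtained from $D$ by a single pass-through graph change, so that $D$ and $D'$ have identical Gauss codes. Then $I(D)\neq I(D')$, while $I$ is invariant under the Roseman and virtual Roseman moves, so $D$ and $D'$ cannot be related by those moves. The hard part is exactly the construction and verification of the correction term: proving that $I$ is invariant under all seven Roseman moves, above all the quadruple point move, and under their virtual analogs, and that the numbers remain well defined along sequences of moves in which the relevant circles are temporarily cut or merged at branch and triple points. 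I regard establishing this robust invariance as the crux, and a fully satisfactory resolution may ultimately lean on the companion conjecture identifying $V_2/\!\cong$ with diagrams modulo all three families of moves.
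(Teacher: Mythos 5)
The first thing to say is that the paper offers no proof of this statement: it is posed as an open conjecture, immediately followed by a discussion of what its truth or falsity would imply, so there is no argument of record to compare yours against. Your proposal must therefore stand on its own, and as written it is a program rather than a proof. The decisive step --- the construction of the corrected invariant $I$ and the verification that it is unchanged by all of the Roseman moves and their virtual analogues while jumping under a graph change --- is precisely the mathematical content of the conjecture, and you explicitly leave it open. Until such an $I$ is exhibited, the example of a split versus Hopf-linked pair of double point circles with identical Gauss codes establishes nothing: one cannot exclude a long sequence of moves that creates and destroys auxiliary double point curves, branch points, and triple points before arriving at the linked configuration, which is exactly the scenario an invariant is needed to rule out.

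Beyond the admitted gap, there are concrete reasons to doubt that the invariant can be assembled from the raw quantities you name. Self-framings and pairwise linking numbers of closed double point curves are not even well defined across the moves that reorganize the double point set: the branch-point moves and the saddle-type move on double curves merge and split components, so there is no persistent identification of ``the same curve'' before and after a move, and hence no stable meaning for a pairwise linking number along a sequence of moves. The quadruple point move changes linking by $\pm 1$ as you observe, but your proposed counter-term, read from the triple-point data in the Gauss code, must also be checked against the virtual Roseman moves, whose whole purpose is to let a sheet with only virtual crossings sweep across the rest of the diagram; whether such sweeps can alter the embedding of the classical double point curves without leaving a trace in the Gauss code is exactly the question at issue, and asserting that they cannot is circular at this stage. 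A more promising route, which your framing-versus-Euler-number remark gestures at, would be to identify a single global integer (for instance a suitably normalized total self-linking of the double point set) whose vanishing is forced for diagrams obtained from realizable ones by virtual Roseman moves, in the spirit of the Carter--Saito and Giller relations among branch points, triple points, and the normal Euler class; but that identification is not carried out here, and without it the argument does not close.
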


Note that if true, these conjectures would imply that our approach to virtual $2$-links gives a distinct theory from that given by Schneider in \cite{Schneider}, as the latter definition does not involve graph changes. Thus, our geometric generalization of virtual links to virtual $2$-links would be distinct from Schneider's combinatorial generalization. On the other hand if this conjecture is false, this would imply that the purely combinatorial definition given by Schneider is equivalent to our geometric definition. In either case, a deeper understanding of these graph changes would be beneficial to further research on representing virtual $2$-links with virtual broken surface diagrams.

%
\section{Fox-Milnor Movies}
\label{s-Fox-Milnor}
%

For surface links in $\mathbb{R}^{4}$, it is useful to consider descriptions in terms of a foliation of $\mathbb{R}^4$ into $\mathbb{R}^3 \times \mathbb{R}$. Such a description can be applied to a surface in a $4$-sphere by deleting one point from the sphere. Suppose that we consider $\mathbb{R}^4$ to have coordinates $(x, y, z, t)$, and suppose that the function $h(x,y,z,t)=t$ is a Morse function when restricted to a given surface link $L$. Then for generic values of $t$, the intersection of $L$ with the hyperplane of constant $t$ will be a classical link in three dimensions. At critical points of the Morse function, we will see the birth or death of a circle, or a crossing of two lines corresponding to a saddle point for the surface. A \emph{Fox-Milnor movie} of $L$ is the sequence of links and singular links ordered by their $t$-values and marking which arcs are involved in the births, deaths, and saddle point crossings. There will be a finite set of links and singular links, since the Morse function has finitely many critical levels. From such a movie, it is possible to reconstruct $L$. For such movies, Carter and Saito give a complete set of moves relating any two Fox-Milnor movies of isotopic links, \cite{CS}.

We may generalize this notion to elements of $V_2$ in a straightforward way by considering, for any pair $(F\times [0,1], L)\in V_2$, a Morse function $h_0$ on $F$, which defines a Morse function $h:F\times [0,1]\rightarrow \R$ by $h(x, t)=h_0 (x)$ for any $(x, t)\in F\times [0,1]$. If we perturb this Morse function so that its level sets have at most one critical point, then generically, the level sets of $h$ will be elements of $V_1$. Critical levels will correspond to either a change in $L$, during which we add or remove a trivial component to the element of $V_1$ or encounter a saddle point on the diagram $\pi(L)$, or will correspond to a change in the topology of the slice of $F$. Such objects, although their use as a generalization of virtual links in higher dimensions was not studied, were used in \cite{StableEq} to study notions of cobordisms and link homology for virtual links. These definitions admit straightforward generalizations to arbitrary dimensions.

\begin{definition}
Let $(F_i\times [0,1], L_i)$, $i=1, 2$, be elements of $V_n$. These elements are \emph{(virtually) link homologous} iff there exists an element $(F\times [0,1], L)\in V_{n+1}$ with a Morse function $h:M\rightarrow \R$ such that $(F_i\times [0,1], L_i)=h^{-1}(i)$. If in addition $L$ is diffeomorphic to $L_i\times [0,1]$, these elements are \emph{(virtually) link concordant}.
\end{definition}

\begin{theorem}
Let $(F_i\times [0,1], L_i)$, $i=1, 2$, be virtually equivalent elements of $V_n$. Then they are virtually link concordant.
\end{theorem}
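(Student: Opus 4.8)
The plan is to show that virtual link concordance is itself an equivalence relation on $V_n$ which contains both generators of virtual equivalence, namely the relation $\sim$ and smooth isotopy of the link. Since $\cong$ is by definition the smallest equivalence relation containing these two, it will then follow that any virtually equivalent pair is virtually link concordant. Reflexivity is witnessed by the trivial product: given $(F\times[0,1],L)$, take $\tilde F=F\times[1,2]$ with $h_0$ the projection to $[1,2]$ (a Morse function with no critical points) and $\Lambda=L\times[1,2]$, so that every slice recovers $(F\times[0,1],L)$ and $\Lambda\cong L\times[0,1]$. Symmetry follows by replacing $h_0$ with $3-h_0$. For transitivity I would glue two concordances along their common level set: if $W_{12}$ is a concordance over $[1,2]$ from $(F_1,L_1)$ to $(F_2,L_2)$ and $W_{23}$ one over $[2,3]$ from $(F_2,L_2)$ to $(F_3,L_3)$, then $W_{12}\cup_{F_2}W_{23}$ with the concatenated Morse function is a concordance after the standard smoothing of collars near the level set $h_0^{-1}(2)=F_2$; because both level sets near $2$ are products $F_2\times(\text{interval})$ and both links are products near $L_2\times[0,1]$, the glued link is again diffeomorphic to $L_1\times[0,1]$, so the product (concordance, not merely homology) condition is preserved.

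It then remains to realize each generator by an explicit product concordance. For an isotopy move, $L_1$ and $L_2$ lie in the same $F\times[0,1]$ and are ambient isotopic, since in the smooth category an isotopy of the image is realized by an ambient isotopy $\phi_s$ of $F\times[0,1]$ with $\phi_1=\mathrm{id}$ and $\phi_2(L_1)=L_2$, cf. \cite{Roseman2}. Taking $\tilde F=F\times[1,2]$, $h_0$ the projection to $[1,2]$, and $\Lambda=\{(\phi_s(p),s):p\in L_1,\ s\in[1,2]\}\subset (F\times[0,1])\times[1,2]$, I obtain an $(n+1)$-submanifold diffeomorphic to $L_1\times[1,2]$ whose slice at $s$ is $\phi_s(L_1)$; in particular the slices at $1$ and $2$ are $L_1$ and $L_2$, giving the desired concordance.

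The substantive case is the relation $\sim$, where $f\colon F_1\hookrightarrow F_2$ is an embedding and $L_2=(f\times\mathrm{id})(L_1)$, so that the link lies entirely in the codimension-$0$ region $f(F_1)\times[0,1]\subset F_2\times[0,1]$. Here the ambient base genuinely changes topology, so I must build a cobordism $W^{n+2}$ from $F_1$ to $F_2$ carrying a Morse function $h_0$ with $h_0^{-1}(1)\cong F_1$ and $h_0^{-1}(2)=F_2$, arranged so that $f(F_1)$ persists as a subregion of \emph{every} level set. Writing $F_2=f(F_1)\cup E$ with $E=\overline{F_2-f(F_1)}$ the complementary codimension-$0$ region, I would choose a smooth $\beta\colon F_2\to[1,2]$ with $\beta\equiv1$ on a neighborhood of $f(F_1)$ and $\beta$ rising on $E$, set $W=\{(x,s)\in F_2\times[1,2]:s\ge\beta(x)\}$ (smoothing corners), and let $h_0$ be projection to $s$; the critical points of $h_0$ then realize a handle decomposition of $E$ relative to its frontier $f(F_1)\cap E$, so that the extra region $E$ is \emph{born} as $s$ increases from $1$ to $2$. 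Because $\beta\equiv1$ on $f(F_1)$, the product region $f(F_1)\times[1,2]$ lies in $W$, and I can place $\Lambda=L_1\times[1,2]$ inside $f(F_1)\times[0,1]\times[1,2]\subset W\times[0,1]$; its slice at every level is $L_1\subset f(F_1)\times[0,1]$, which at level $2$ is exactly $L_2$. Finally, the hypothesis that $L_2$ meets every component of $F_2\times[0,1]$ forces every component of $F_2$ to meet $f(F_1)$, so $\Lambda$ meets every component of $W$, confirming that $(W\times[0,1],\Lambda)$ is a bona fide element of $V_{n+1}$.

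I expect the main obstacle to be precisely this base cobordism for $\sim$: verifying that $W$ can be smoothed to an honest cobordism on which $h_0$ is Morse with the prescribed regular level sets $F_1$ and $F_2$, equivalently that $F_2$ is obtained from $f(F_1)$ by attaching the handles of $E$, and carrying out the corner-rounding and collaring needed so that $1$ and $2$ are interior regular values and $(W\times[0,1],\Lambda)$ literally satisfies the definition of an element of $V_{n+1}$ with $\Lambda$ properly embedded. The remaining bookkeeping---the gluing and smoothing in the transitivity step, and checking that all constructed links are diffeomorphic to the relevant product---is routine by comparison.
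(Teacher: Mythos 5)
Your proof is correct and follows essentially the same strategy as the paper's (very terse) argument: decompose virtual equivalence into its generating moves --- isotopy of $L$ and the base-change relation $\sim$ --- and realize each by an explicit product concordance, composing via transitivity. Your write-up supplies the details (the trace-of-isotopy concordance and the base cobordism built from $\beta$) that the paper only gestures at with the phrase ``surgeries, excisions, and gluings on the underlying manifold together with isotopy of $L_i$.''
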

\bpr The proof for $V_1$ may be found in \cite{StableEq}. In general, any virtual equivalence can be broken down into a series of surgeries, excisions, and gluings on the underlying manifold $M_i$ together with isotopy of $L_i$.\epr

It is also possible to express virtual broken surface diagrams in $\R^3$ by using Fox-Milnor movies of virtual $1$-link diagrams, together with a foliation of $\R^3$ by planes.

\subsection{Yoshikawa Diagrams}
\label{s-Yoshikawa}
%

In this approach to describing knotted surfaces in $4$-space, we consider Yoshikawa diagrams, sometimes called ch-diagrams or hyperbolic splittings, which are link diagrams where in addition to over and undercrossings, we can also have transverse intersections with an A-smoothing direction marked on them, such that the links obtained by performing all marked A (or B) smoothings results in an unlink. We will define a \emph{virtual Yoshikawa surface link} to be a Yoshikawa diagram (allowing virtual crossings) for which the A and B smoothed links are virtual unlinks. We may take these diagrams modulo Yoshikawa's moves and virtual replacement (which for a virtual link means to replace a segment of a curve with only virtual crossings by a different segment with the same endpoints and only virtual crossings) to define virtual Yoshikawa surface links.

A Yoshikawa diagram defines a (perturbed) Fox-Milnor movie by and, hence, an embedding of a surface into $\mathbb{R}^{4}$ by interpreting the A smoothing as the $t=0$ level set, the B smoothing as the $t=2$ level set, and the knotted graph itself as the $t=1$ level set, assuming that all saddle points occur at $t=1$. To make this into a true Fox-Milnor movie it is necessary to then perturb the saddle points slightly to ensure that they occur at different levels so that $t$ is in fact a Morse function. It is possible to express every knotted surface using a Yoshikawa diagram, \cite{CKS}.

Yoshikawa introduced a set of moves for Yoshikawa diagrams which were conjectured to be complete, meaning that two Yoshikawa diagrams are conjectured to represent the same surface iff they are related by some finite sequence of these moves. These moves consist of the usual Reidemeister moves together with those shown in Fig. \ref{Ymoves}.

\begin{figure}
		\centering
			\includegraphics[scale=0.3]{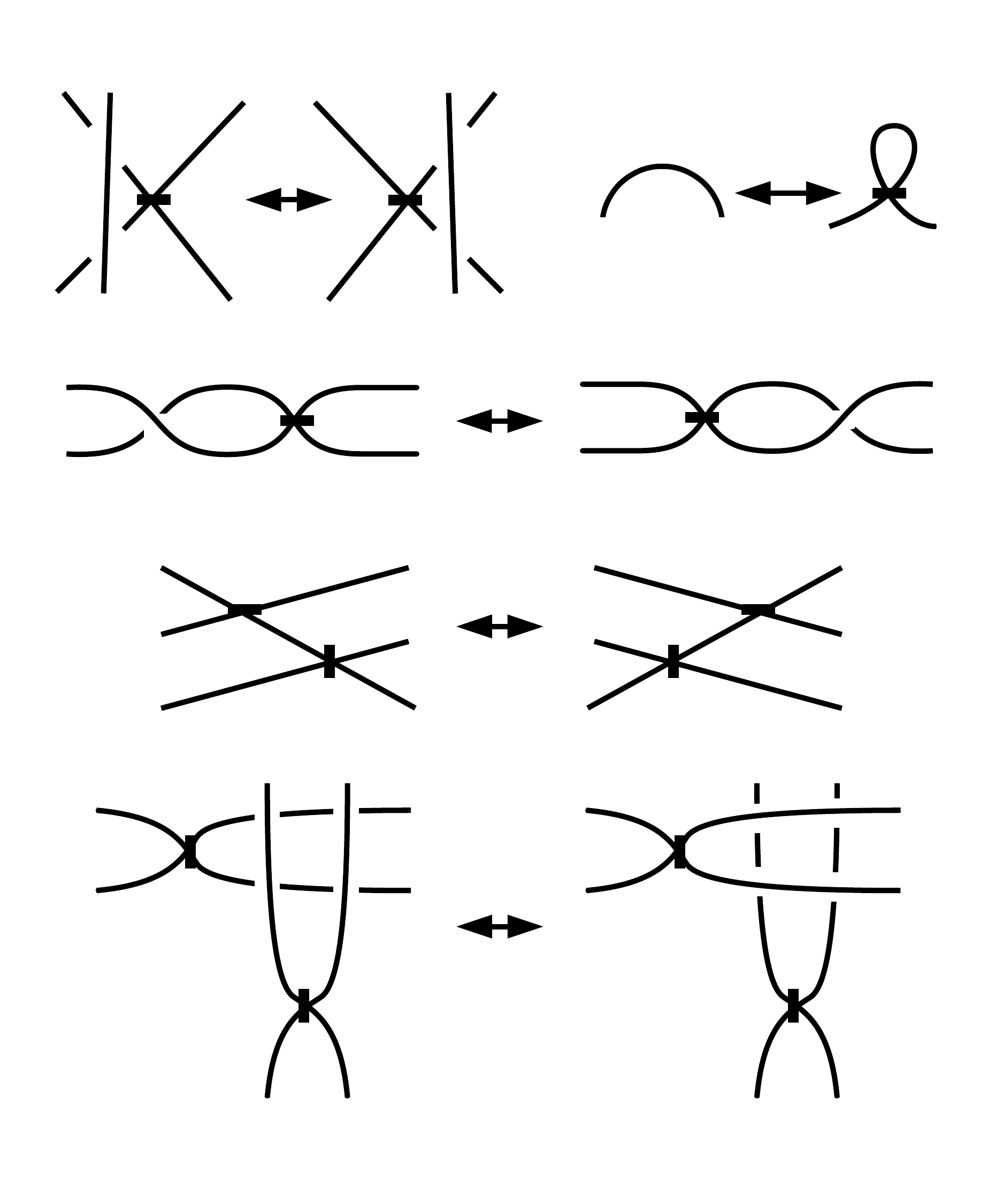}
		\caption{The additional moves permitted for a Yoshikawa diagram.}
		\label{Ymoves}
\end{figure}

By allowing virtual link diagrams to have marked crossings, and allowing all the virtual Reidemeister moves together with the moves shown in Fig. \ref{Ymoves}, we may use such diagrams to encode virtual Fox-Milnor movies for virtual surfaces, and thus to represent virtual broken surface diagrams in $\R^3$. These diagrams may be regarded as easier to utilize for some purposes, since they consist simply of singular virtual links with marked singularities. However, it is not clear whether these moves are complete, i.e. whether two virtual Yoshikawa diagrams which represent the same virtual $2$-link will necessarily be related by some sequence of these moves. Since these moves do not take into account the graph changes to the double point sets, it seems unlikely that they would suffice. Nor is it clear how graph changes could be incorporated into this formalism. Further study on such diagrams for virtual surfaces would be very helpful in understanding whether they could be a useful tool.

\begin{remark}
Since the combinatorial equivalence relation on virtual broken surface diagrams given by Schneider in \cite{Schneider} does not involve graph changes, it seems more likely that the Yoshikawa moves might provide a complete set of moves up to Schneider's stricter equivalence relation on VBSDs.
\end{remark}
%
\section{Realizability of Surface Gauss Codes}
%

Given a (classical) Gauss code, it is possible to determine whether or not this is the Gauss code of some knot diagram on the plane.

By a \emph{graph diagram} we mean a link diagram for a link whose components may be $4$-valent graphs instead of circles.

\begin{theorem}
Let $X$ be a surface Gauss code. Then $X$ is the surface Gauss code of some classical broken surface diagram in $\mathbb{R}^{3}$ iff $X$ admits a Morse function such that every level set has a Gauss code which is realizable as a graph diagram on the plane.
\end{theorem}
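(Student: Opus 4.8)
The plan is to prove both implications by slicing with a height function and translating the statement into the language of Fox--Milnor movies of graph diagrams, exactly as in Section \ref{s-Fox-Milnor}. Throughout, I would identify the target $\R^3$ with $\R^2\times\R$ and let $t$ denote the coordinate on the second factor, so that the planes $\R^2\times\{t\}$ foliate $\R^3$.

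First I would prove necessity. Suppose $X$ is the surface Gauss code of a broken surface diagram $B\subset\R^3$, so there is a generic immersion $j\colon X\to\R^3$ whose double point set, triple points, and cusps induce on $X$ precisely the marked curves $D$ with their pairings, over/under marks, and normal data. After an arbitrarily small perturbation of $j$ I may assume that the composite $h=t\circ j\colon X\to\R$ is a Morse function whose critical values are distinct from the $t$-values of the triple points and cusps, and that each plane $\R^2\times\{t\}$ meets $B$ transversely for generic $t$. For such $t$ the slice $B\cap(\R^2\times\{t\})$ is an honest graph diagram in the plane: its underlying $4$-valent graph is the transverse intersection, its crossings are the points of $D\cap h^{-1}(t)$, and the over/under marking is inherited from $B$. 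Its Gauss code is by construction the level-set Gauss code of $(X,D)$ at $t$, so every level set has a planar-realizable Gauss code, as required.

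Next I would prove sufficiency, which is the substantive direction. Given a Morse function $h$ on $X$ with every level set realizable as a graph diagram on the plane, the goal is to assemble a broken surface diagram in $\R^2\times\R$ whose slices are these realizations. Between consecutive critical values of $h$ the diffeomorphism type of $h^{-1}(t)$ and the combinatorics of $D\cap h^{-1}(t)$ are constant, so the level-set Gauss code is constant there; I would fix a single planar realization on each such interval and take it to be independent of $t$ (a product family), placing the resulting graph diagram in $\R^2\times\{t\}$. Stacking these families sweeps out a surface immersed in $\R^3$ whose double point curves are traced by the crossings of the slice diagrams and whose over/under marking is dictated slice-wise; one then checks that this immersion is generic and that its normalization, together with the induced marked curves, reproduces $(X,D)$.

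The hard part will be gluing across the critical levels of $h$ and verifying that the higher singularities of $B$ come out correctly. At an index-$0$ or index-$2$ critical point of $h$ a circle of the level set is born or dies, which I would realize by the standard local birth/death of a trivial circle in the movie; at an index-$1$ (saddle) critical point two strands of the slice undergo a band move, which I would realize by the standard saddle in the movie, after checking that the planar realizations on the two adjacent intervals can be chosen to agree outside a neighborhood of the saddle (here constancy of the Gauss code on each interval, together with the freedom to isotope a planar realization, is what lets one reconcile the two one-sided choices). Separately, the points of $X$ where two curves of $D$ cross must produce genuine triple points of $B$: in the movie these appear as the moments when two crossings of the slice graph diagram pass through one another, i.e. as Reidemeister-III events, and I would need the planar realizations near such a level to be chosen so that this crossing exchange actually occurs in the plane rather than requiring a forbidden passage. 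The cusp (umbrella) points, where curves of $D$ terminate, must likewise be matched to the local cusp model in a slice. The principal obstacle is thus coherence: local planar realizability of each slice is given, but one must show these local choices can be made globally compatible through the critical levels and the prescribed triple-point and cusp data, so that the stacked movie is a bona fide broken surface diagram whose surface Gauss code is exactly $X$.
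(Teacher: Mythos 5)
Your proposal takes essentially the same approach as the paper: both directions are handled by identifying the Morse function with the time coordinate of a Fox--Milnor movie, reading off realizable slice Gauss codes in one direction and stacking planar realizations of the slices into a movie (hence a broken surface diagram in $\mathbb{R}^{3}$) in the other. The paper's own proof is only a few sentences long and does not address the coherence-across-critical-levels issue you flag at the end, so your writeup is, if anything, more detailed than the original.
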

\bpr Suppose that $X$ is the Gauss code for some classical broken surface diagram in the $3$-hyperplane. Then a Fox-Milnor movie will define the required Morse function. On the other hand given such a Morse function $f$ on $X$, $f$ will define a Fox-Milnor movie each of whose frames is a classical link or graph. This Fox-Milnor movie thus defines a broken surface diagram in $\mathbb{R}^{3}$.\epr

In fact since every realizable broken surface diagram admits a self-indexing Morse function, it follows that a surface Gauss code is realizable iff it admits a self-indexing Morse function whose level sets for $t\neq 1$ are realizable links and which is a realizable knotted graph for $t=1$.

%

%

\section{Welded Links and Links in Higher Dimensions}
\label{s-welded}
%

In this section we will discuss how it may be possible to extend the definition of welded links to higher dimensions. However, we will not give a formal definition here. Instead, this discussion merely indicates some possible avenues for extending the definition.

Recall that, by the work of Satoh, \cite{SS}, welded $1$-links may be interpreted as ribbon embeddings of tori in $S^4$. We review his construction and main results here. Let $W$ be a welded link diagram. From this welded link we can construct a (classical) broken surface diagram by following the prescription illustrated in Fig. \ref{SatohTube}. The collection of tori in $S^4$ corresponding to this broken surface diagram will be denoted $Tube(W)$.

\begin{figure}
		\centering
			\includegraphics[scale=0.8]{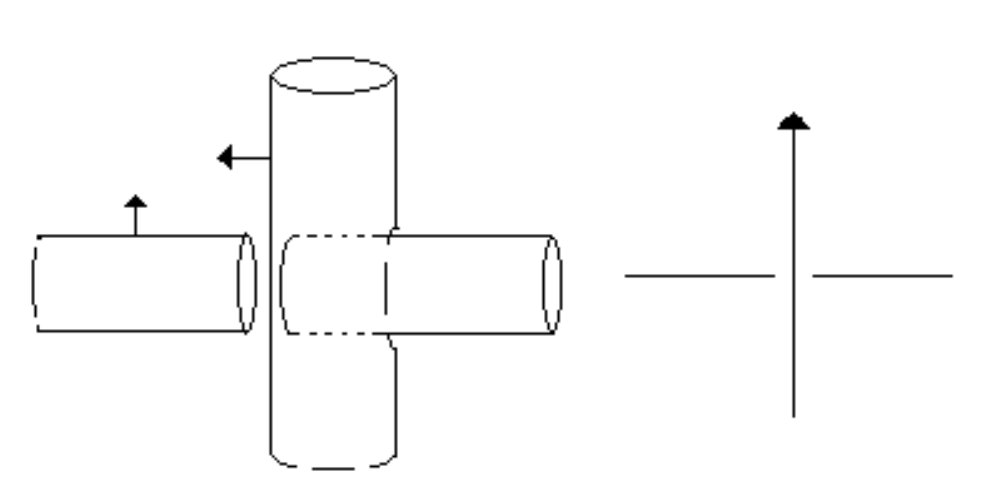}
		\caption{The definition of Satoh's $Tube$ map.}
		\label{SatohTube}
\end{figure}

The following two theorems are shown in \cite{SS}.

\begin{theorem}
If $W$ and $W'$ are welded link diagrams which are welded equivalent (i.e. related by welded Reidemeister moves), then $Tube(W)$ is isotopic to $Tube(W')$.
\end{theorem}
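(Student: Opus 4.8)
The plan is to reduce the statement to a finite, purely local check. Welded equivalence is by definition generated by a finite list of local moves: the three classical Reidemeister moves, the three virtual Reidemeister moves (together with the mixed ``detour'' move relating virtual and classical crossings), and the forbidden over-move $F_o$ that is permitted for welded but not virtual links. Each such move alters the welded diagram $W$ only inside a small disk $D$, and the $Tube$ construction is defined locally---the broken surface diagram assigned to a portion of $W$ depends only on that portion. Hence $Tube(W)$ and $Tube(W')$ coincide outside the tube over a neighborhood of $D$, and it suffices to produce, for each move, an ambient isotopy of $S^4$ supported near $Tube(D)$ carrying one local picture to the other. By the completeness of the Roseman moves for broken surface diagrams stated earlier, this is the same as exhibiting a finite sequence of Roseman moves between the two local diagrams.

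First I would dispatch the classical moves. Under $Tube$, each arc becomes a tube (a cylindrical sheet $S^1\times(\text{arc})$), and a classical crossing becomes two tubes in which the undercrossing tube is broken so as to pass through the overcrossing tube, contributing a pair of double-point circles. One then checks that $R1$, $R2$, $R3$ are carried respectively to the Roseman move creating or cancelling a branch (umbrella) point, the Roseman move cancelling a pair of double-point circles, and the Roseman move passing a sheet across a triple point. These are routine once the local tube model is written down.

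Next come the virtual and mixed moves. The defining feature of $Tube$ is that at a virtual crossing the two tubes are simply kept disjoint: the extra codimension in $S^4$ lets one tube be pushed off the other, so a virtual crossing contributes no double-point set at all. Consequently the virtual Reidemeister moves and the mixed move correspond to ambient isotopies of $S^4$ that merely rearrange mutually disjoint tubes, which are manifestly isotopies.

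The main obstacle is the forbidden over-move $F_o$, which is exactly what distinguishes welded from virtual links and is \emph{not} realized by an isotopy at the level of $1$-link diagrams. The essential geometric input is that the tube over a strand is a hollow cylinder in $S^4$: although its projection to $\R^3$ bounds a solid tube, the surface itself is $2$-dimensional and the fourth coordinate allows a neighboring tube to be slid through its interior. I would set up an explicit local model of $Tube(W)$ near the $F_o$ move and describe the isotopy that drags the moving tube through the hollow interior of the over-tube, verifying that throughout the motion it meets the remaining sheets in exactly the prescribed double-point pattern and that the passage is realized by a finite sequence of Roseman moves terminating at $Tube(W')$. Making this passage precise---and confirming that it yields exactly $Tube(W')$ rather than some other tubing---is the delicate point. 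Once all cases are assembled, completeness of the Roseman moves gives that $Tube(W)$ and $Tube(W')$ are isotopic in $S^4$.
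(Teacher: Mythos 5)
Your proposal takes essentially the same approach as the paper, whose entire proof is the remark that one checks each welded Reidemeister move and exhibits a corresponding isotopy of the surface in $S^4$ (deferring all details to Satoh's paper); your case-by-case local analysis, with the forbidden over-move handled by sliding a tube through the hollow interior of the over-tube, is precisely that check made explicit. The one detail to correct is your pairing of the first Reidemeister move with the branch-point Roseman move: $Tube(W)$ is a union of tubes whose broken surface diagram has no branch points, so the kink is removed by cancelling a pair of double-point circles rather than by creating or deleting an umbrella point.
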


This theorem is proved by simply checking each welded Reidemeister move, and showing that there is a corresponding isotopy of the surface in $S^4$.

\begin{theorem}
For all ribbon tori $K$ in $S^4$, there is a welded knot $W$ such that $Tube(W)=K$.
\end{theorem}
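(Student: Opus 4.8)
The plan is to invert the $Tube$ construction directly, reading a welded diagram off a ribbon presentation of $K$. First I would fix a ribbon presentation $(B,H)$ of the ribbon torus $K$ in $S^4$, so that the bases $B_i$ are $3$-balls and the fusion handles $H_j\cong D^1\times D^2$ meet the bases only in ribbon intersections. Let $R$ be the associated ribbon solid and let $\Gamma\subset R$ be a spine: a $1$-complex onto which $R$ deformation retracts, chosen so that each base contributes a vertex and each handle an edge, and so that each ribbon intersection is recorded as a transverse passage of a handle-edge through the interior of a base. Since $K$ is a single torus, $\Gamma$ is connected, and by the Euler-characteristic count noted above ($|H|=|B|$ for a torus), $\Gamma$ has first Betti number $1$.

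Next I would project $\Gamma$ generically to $S^2$, in general position so that its image is a $4$-valent graph. Each ribbon intersection of $H_j$ with a base $B_i$ becomes a crossing of the diagram, which I would mark as a \emph{classical} crossing, with the strand coming from the base as the over-strand and with the orientation and co-orientation data of the ribbon intersection determining the sign, exactly as in the local model for $Tube$ in Fig.~\ref{SatohTube}. Any remaining transverse double point of the projection that does not arise from a genuine ribbon intersection in $S^4$ I would mark as a \emph{virtual} crossing, since there the two portions of $\Gamma$ pass disjointly in the fourth coordinate. This produces a welded knot diagram $W$; different choices of spine and of generic projection yield diagrams differing by welded Reidemeister moves and virtual replacements, so the welded equivalence class of $W$ is well defined, although for the theorem only existence is required.

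Finally I would verify that $Tube(W)=K$. The key observation is that $Tube$ by construction turns each classical crossing of $W$ into exactly one tube-piercing, i.e.\ one ribbon intersection with matching over/under and orientation data, and turns each virtual crossing into a pair of tubes passing disjointly in $S^4$. Hence $Tube(W)$ admits a ribbon presentation with the same combinatorial ribbon data as $(B,H)$, namely the cardinality of $B$ together with, for each handle, the ordered sequence of bases it pierces (up to reversal). By the Lemma characterizing ribbon presentations in a simply-connected manifold up to simple equivalence by their ribbon data (applicable here since $n=2\geq 2$ and $S^4$ is simply connected), the two presentations are simply equivalent, so $Tube(W)$ and $K$ are ambient isotopic. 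The main obstacle I anticipate lies in this last step: one must check that the orientation, the co-orientation, and the linear ordering of ribbon intersections along each band are faithfully encoded by the crossing signs and over/under markings of $W$, so that the ribbon data genuinely coincide. This is where the precise conventions of the local $Tube$ model in Fig.~\ref{SatohTube} must be matched against the general-position data of $\Gamma$, and where handle slides and handle passes may be needed to bring $(B,H)$ and the presentation of $Tube(W)$ into the normal form to which the ribbon-data Lemma applies.
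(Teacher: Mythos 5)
The paper does not actually prove this statement; it is quoted from Satoh \cite{SS}, so there is no internal proof to compare against. Your strategy --- read a welded diagram off a ribbon presentation of $K$ and then use the ribbon-data lemma to identify $Tube(W)$ with $K$ up to simple equivalence --- is essentially Satoh's, and the closing step (matching the ribbon data and invoking the lemma, valid since $n=2\geq 2$ and $S^4$ is simply connected) is a clean way to finish. But as written there are two genuine gaps in the middle of the argument.

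First, the spine $\Gamma$ you project need not be a circle. For a torus one indeed has $|H|=|B|$ and $b_1(\Gamma)=1$, but nothing forces every base to carry exactly two handle-ends: three bases with a loop handle on $B_1$ and one handle from $B_1$ to each of $B_2$ and $B_3$ give a connected spine with a degree-$4$ vertex and two leaves, and its boundary is still a torus. Projecting such a $\Gamma$ produces a welded \emph{graph} diagram, not a welded knot diagram, so you have not yet produced an object $W$ on which $Tube$ is defined. You must first normalize the ribbon presentation (absorbing leaf bases, splitting high-valence bases) while keeping the boundary knot fixed up to isotopy, and that normalization is where the real work of the surjectivity proof lives. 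Second, ``project $\Gamma$ generically and mark each ribbon intersection as a classical crossing'' does not quite parse: a ribbon intersection is an intersection of a handle with the $3$-dimensional \emph{interior} of a base, and it is invisible to a generic projection of the $1$-complex $\Gamma$; the double points of that projection are unrelated to the ribbon singularities. One must instead \emph{construct} the diagram: a small disk region for each base carrying the over-arc formed by its two handle-ends, with every handle that passes through that base drawn as an undercrossing inside that region, and all remaining double points marked virtual. With those two repairs (plus the sign and co-orientation bookkeeping you already flag at the end) your outline does go through and recovers Satoh's argument.
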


One of the open questions in \cite{SS} is: do there exist welded knots $W, W'$ such that $W$ and $W'$ are not welded equivalent, but $Tube(W)$ is isotopic to $Tube(W')$? There is an infinite family of pairs of such examples given by the author in \cite{BW}. Let $W^\dagger$ denote the welded knot diagram obtained by reflecting the diagram of $W$ across some axis in the plane. Let $K^*$ denote the reflection of a surface in $S^4$. We will use $-$ to indicate reversing the orientation.

\begin{lemma}
For any welded knot $W$, $Tube(-W^\dagger)\cong -Tube(W)^*$.
\end{lemma}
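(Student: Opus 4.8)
The plan is to exploit the fact that Satoh's $Tube$ map is defined by a purely local prescription on the welded diagram (Fig.~\ref{SatohTube}), so that it commutes with any symmetry of the ambient space that respects this local model. Concretely, I would fix an ambient reflection $\sigma:S^4\to S^4$ realizing the surface reflection $(\cdot)^*$, arranged so that it restricts to the planar axis-reflection $W\mapsto W^\dagger$ on the diagram plane $\R^2\subset\R^3\subset\R^4$, and then show that $\sigma$ carries the broken surface diagram underlying $Tube(W)$ onto the one underlying $Tube(W^\dagger)$, feature by feature. Since $\sigma$ is an ambient diffeomorphism, this already identifies the two tori up to the reflection, and the claimed $\cong$ is then an isotopy statement.

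First I would set up coordinates, writing $\R^4=\R^3\times\R$ with the diagram in a plane $\R^2\subset\R^3$ and $\sigma$ the reflection of $\R^3$ across the chosen axis (extended to $\R^4$). Because $Tube$ builds the surface as a tube over each arc together with a fixed local model at each classical and virtual crossing, the image $\sigma(Tube(W))$ is again a union of tubes over arcs, now lying over the reflected diagram, with the local models reflected. The content of this step is to check crossing-type by crossing-type that the reflected model agrees with the model $Tube$ assigns to the reflected crossing: an axis-reflection sends a classical crossing to a classical crossing with the over/under information preserved but the crossing sign reversed, and sends a virtual crossing to a virtual crossing, and in each case the reflected tube picture is exactly the one prescribed by $Tube$ for the corresponding crossing of $W^\dagger$. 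This yields $\sigma(Tube(W))=Tube(W^\dagger)$ as \emph{unoriented} surfaces.

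The two orientation-reversals in the statement then enter through bookkeeping. The reflection $\sigma$ is orientation-reversing on $S^4$, so comparing the orientation it induces on $Tube(W)$ with the standard orientation of the image produces exactly the factor $-(\cdot)^*$ on the surface side. On the diagram side, reversing the strand orientation reverses the co-orientation used to orient each tube, and hence reverses the induced orientation of the resulting torus; this is precisely the passage $W^\dagger\mapsto -W^\dagger$. Matching these two computations---verifying that the orientation $\sigma$ induces on $Tube(W)$ is the one $Tube$ assigns to $-W^\dagger$---upgrades the set-level equality to $Tube(-W^\dagger)\cong -Tube(W)^*$.

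The main obstacle I expect is this orientation bookkeeping rather than any topological difficulty: one must keep straight that the planar reflection flips crossing signs while preserving over/under, that the meridional and longitudinal orientations of the tubes transform compatibly, and that the orientation-reversal of $\sigma$ lines up with both minus signs at once. Once the local model of Fig.~\ref{SatohTube} is checked to be $\sigma$-equivariant up to these orientation factors, the global conclusion follows, since $Tube$ is well-defined on welded-equivalence classes by the first theorem quoted from \cite{SS} and $\sigma$ is a diffeomorphism of $S^4$.
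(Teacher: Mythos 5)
Your proposal is correct and matches the paper's approach: the paper simply defers to \cite{BW}, where the lemma ``is proved by checking the resulting diagrams,'' and your argument---realizing $(\cdot)^*$ by an ambient reflection $\sigma$ of $S^4$ extending the planar axis-reflection, checking $\sigma$-equivariance of the local $Tube$ models at classical and virtual crossings, and then tracking the two orientation reversals---is precisely that diagram check organized systematically. The only caveat is that the orientation bookkeeping you flag is the actual content of the verification and would need to be carried out explicitly in a full writeup, but the structure you describe is the right one.
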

This is proved in \cite{BW} by checking the resulting diagrams.

\begin{corollary}
Let $W$ be a welded knot which is not $(-)$ amphichiral, i.e. for which $W$ is not welded equivalent to $-W^\dagger$. Then $W, -W^{\dagger}$ provides an example of a pair of welded knots which are not welded equivalent, but whose corresponding ribbon tori under the $Tube$ map are isotopic.
\end{corollary}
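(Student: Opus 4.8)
The plan is to separate the two assertions of the corollary. The non-equivalence of $W$ and $-W^\dagger$ as welded knots is exactly the hypothesis that $W$ is not $(-)$-amphichiral, so nothing needs to be proved there. The entire content is therefore to show that the two associated tori agree, i.e. that $Tube(W)$ is isotopic to $Tube(-W^\dagger)$. Combining this with the preceding Lemma, which gives $Tube(-W^\dagger)\cong -Tube(W)^*$, reduces the corollary by transitivity of $\cong$ to the single general statement
\[
Tube(W)\cong -Tube(W)^*
\]
valid for \emph{every} welded knot $W$; in words, every tube is isotopic, as an oriented surface in $S^4$, to the orientation-reverse of its mirror image. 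I would isolate this as the key step, since once it is in hand the example follows immediately.

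To prove $Tube(W)\cong -Tube(W)^*$ I would produce an orientation-reversing self-diffeomorphism $\psi$ of $S^4$ that carries the torus $T:=Tube(W)$ to itself setwise while reversing its surface orientation; such a $\psi$ exhibits $T$ as the mirror of $-T$, that is, $T\cong -T^*$. The diffeomorphism I have in mind is a fibrewise reflection of the meridians of the tube. By Satoh's construction, away from the crossings the tube carries the structure of a trivial bundle of meridian circles, together with a longitudinal direction running along the knot and a fourth (push-off) coordinate in which over-sheets are separated from under-sheets (and in which tubes are also separated at virtual crossings). Choosing a section of the framing of the normal $2$-plane field along the knot, I reflect each meridian circle about a consistently chosen diameter. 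This flips a single one of the two meridian coordinates, and so is orientation-reversing on the ambient $\mathbb{R}^4$ while leaving the longitudinal and push-off directions fixed; it reverses the orientation of each meridian circle, hence of $T$; and, because the push-off coordinate is untouched, it keeps over-sheets over and under-sheets under, so it genuinely maps the tube to the tube.

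The main obstacle is the global coherence of $\psi$. Defining the meridian reflection requires a consistent framing of the normal $2$-plane field along the entire knot; such a framing exists over the circle, but I must verify that the resulting map extends from a neighborhood of $T$ to an honest diffeomorphism of all of $S^4$, equal to the identity away from a neighborhood of the tube, so that after composing with a fixed reflection of $S^4$ it is isotopic (using that all orientation-reversing diffeomorphisms of $S^4$ lie in one isotopy class) to the standard involution defining the operation $(\cdot)^*$. The delicate local check is at the crossing regions, where the clean meridian-bundle picture is distorted by the push-off into $\p_{x_4}$: there one must confirm that reflecting the meridian while fixing the push-off direction preserves the resolved double-point configuration. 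Once these verifications are carried out, $\psi$ realizes $T\cong -T^*$, and with the Lemma this yields $Tube(W)\cong Tube(-W^\dagger)$, producing a pair of welded knots that are not welded equivalent yet have isotopic associated ribbon tori.
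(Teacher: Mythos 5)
Your reduction is exactly right, and it exposes something the paper glosses over: the corollary is stated with no argument beyond its juxtaposition with the preceding Lemma, but the Lemma alone only gives $Tube(-W^\dagger)\cong -Tube(W)^*$, so the claim that the two tori are isotopic is equivalent to the additional general fact $Tube(W)\cong -Tube(W)^*$, which the paper leaves entirely implicit (it is presumably part of what is checked in the cited reference \cite{BW}). Your proposed mechanism for this missing ingredient --- a fibrewise reflection of the meridian circles of the tube --- is the standard and correct one. In Satoh's construction the tube can be positioned so that every meridian circle lies in a translate of a fixed $2$-plane transverse to the plane of the diagram and is symmetric about the $3$-space containing the diagram; the reflection of $S^4$ in that $3$-space then carries $Tube(W)$ to itself setwise, fixes the longitudinal direction, reverses every meridian and hence the surface orientation, and preserves the resolved configuration at each classical crossing (one tube passing through the inside of the other) as well as the disjointness at welded crossings. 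Since this reflection \emph{is} the hyperplane reflection defining $(\cdot)^*$, it realizes $T^*=-T$ directly, and combined with the Lemma this gives $Tube(W)\cong Tube(-W^\dagger)$ as required.

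One caution: do not appeal to the assertion that all orientation-reversing diffeomorphisms of $S^4$ lie in a single isotopy class. Modulo composing with an orientation-preserving diffeomorphism, that is the statement that $\pi_0\mathrm{Diff}^+(S^4)$ is trivial, which is not known. The fix is the one indicated above: arrange by a preliminary ambient isotopy that your $\psi$ is literally the standard hyperplane reflection used to define the mirror, so that no comparison of mapping classes is ever needed. With that adjustment your argument is complete and supplies precisely the step the paper omits.
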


Since there are many welded knots which are not $(-)$ amphichiral, this provides a large collection of examples. However, these welded knots are related by a fairly simple symmetry. Therefore, we may modify the question in \cite{SS} to the following form:

\begin{question}
Are there welded knots $W, W'$ such that $W$ and $W'$ are not welded equivalent \emph{and are not related by symmetries of reversal and mirror images}, but $Tube(W)$ is isotopic to $Tube(W')$?
\end{question}

For classical knots $K, K'$, it is known that $Tube(K)\cong Tube(K')$ iff $K\cong K'$ or $K\cong -K^\dagger$, \cite{Liv, BW}

Satoh's construction suffices to present not only ribbon tori using welded knots, but links whose components are ribbon tori by using welded knots.

\begin{remark}
Satoh constructs a link whose components are ribbon tori in $S^4$ for any welded link. However, the construction can be generalized to define a map $Tube_n$ on welded knots which assigns a ribbon presentation $(B, H)$ for an $n$-knot such that $|B|=|H|$ for any welded knot. Specifically, we let $Tube_n(W)=(B_W, H_W)$ be the ribbon presentation for $Tube(W)$, letting each arc stand for a base, and connecting these bases with handles in the obvious manner. The above results may easily be seen to apply to $Tube_n$ provided $n\geq 2$. $Tube_1$ is of course not well-defined.
\end{remark}

Satoh's work relating welded knots to ribbon knottings of $S^1\times S^{n-1}$ in $S^{n+2}$, $n\geq 2$, provides us with a geometric heuristic for defining higher-dimensional welded knots. As welded knots represent knotted tori in $\R^4$, it is reasonable to attempt to generalize this notion to higher dimensions by considering a Fox-Milnor movie of welded $1$-links. This will present a knotted $3$-manifold in $\R^5$. At singular levels, in which two arcs in the welded link are joined by a $1$-handle, we interpret the singularity as shown in Fig. \ref{Tube2}.

\begin{figure}[htbp]
\begin{centering}
\includegraphics[scale=.5]{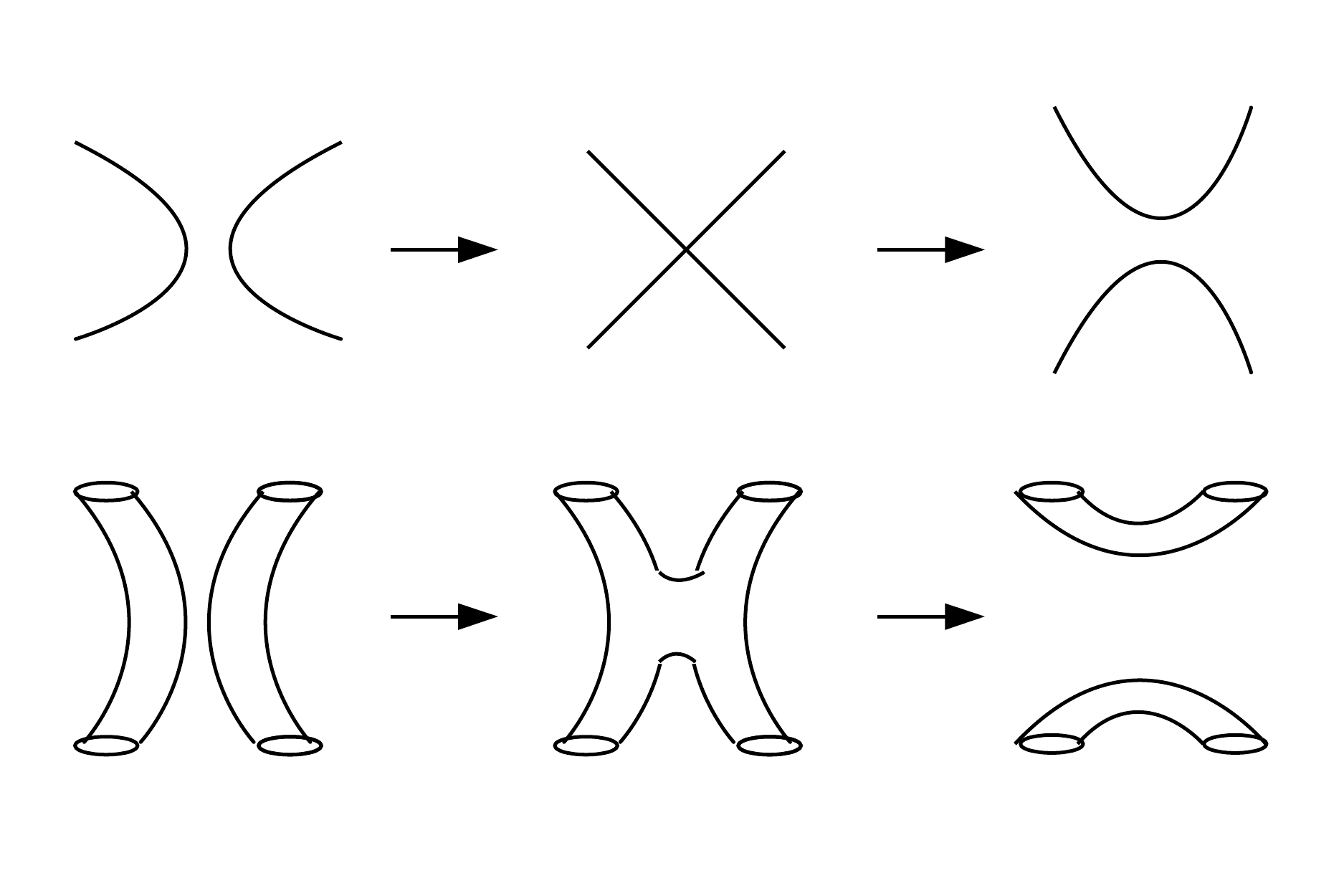}
\caption{The Fox-Milnor movie of a welded $1$-link $K$ shown in a neighborhood of a singular level, and the corresponding interpretation as a Fox-Milnor movie for a $3$-knot, the latter presented as a Fox-Milnor movie of broken surface diagrams.}
\label{Tube2}
\end{centering}
\end{figure}

A Fox-Milnor movie of welded knots can also be interpreted as a kind of broken surface diagram with classical and welded crossings. However, unlike the virtual broken surface diagrams, these welded broken surface diagrams will have triple points at which two classical double point curves meet a welded double point curve. Recall that for virtual broken surface diagrams, every triple point is either the meeting of three classical double point curves or one classical double point curve with two virtual double point curves. Therefore, not every Fox-Milnor movie of welded links will define a Fox-Milnor movie of virtual links. It is also not clear whether the generalized graph changes to a virtual broken surface diagram can be interpreted as isotopies of a $3$-knot in $\R^5$. As such, if we wish to extend the definition of welded knots to higher dimensions in such a way that welded $n$-links still represent $(n+1)$-links with certain geometric properties, it does not appear that welded links can be defined as a simple quotient of virtual links. The fact that for $1$-links, welded links are a quotient of virtual links would then appear as a special case.

It may be the case, then, that in higher dimensions, the geometric generalization of virtual knots which we have given will not be directly related to a generalization of welded knots that respects Satoh's geometric interpretation of welded $1$-links. More work is needed to determine how best to formally extend the notion of a welded knot to higher dimensions.

\section{Ribbon $n$-Knots with Isomorphic Quandles}
\label{s-Ribbon-Isom-Q}
%

Let us say that in a ribbon presentation $(B, H)$ a handle $h\in H$ is a \emph{trivial handle} if both ends are attached to the same base and $h$ can be isotoped so it does not meet any bases except at its ends.

\begin{definition}
Let $K, K'$ be $n$-knots in $S^{n+2}$, and let $T=S^1\times S^{n-1}$. Let $T_n$ indicate the connected sum of $n$ copies of $T$ that are trivially embedded, that is, which bound embedded copies of $S^1\times D^{n}$. We will say that $K$ and $K'$ are \emph{weakly equivalent} if $K\# T_n$ is isotopic to $K'\# T_m$.
\end{definition}

Note that in \cite{Liv}, weak equivalence is called ``stable equivalence,'' as gluing on $T$ may be thought of as a stabilization of the knot. Because this term is also used to refer stable equivalence of ribbon presentations, we are adopting the term ``weak equivalence'' instead to avoid any confusion.

\begin{definition}
Let $(B, H), (B', H')$ be ribbon presentations for $n$-links, $n\geq 2$. We will say that they are \emph{$k$-weakly stably equivalent} if they are stably equivalent after attaching $k$ trivial handles, and that they are \emph{weakly stably equivalent} if they are $k$-weakly stably equivalent for some finite $k$.
\end{definition}

Given a ribbon presentation $(B, H)$, the bases $b_i$ in $B$ are a collection of embedded $(n+1)$ disks. We will say that a path in $S^{n+2}-K$ \emph{passes through} a base $b_i\in B$ if it intersects the interior of $b_i$. Note that in general position, a path can pass through a base positively or negatively, depending on whether it passes through it in the same direction as the normal vector or in the opposite direction.

\begin{remark}
To specify a handle $h\in H$, it suffices to specify the starting base to which $\{ 0\} \times D^n$ is glued, the signed intersections of the handle in the order it passes through bases, and the terminal base to which $\{ 1\} \times D^n$ is glued. Any two handles meeting these conditions are smoothly isotopic and thus equivalent.
\end{remark}

\begin{theorem}
Let $K$ be a ribbon knot in $S^{n+2}$, $n\geq 2$, of arbitrary genus. Suppose that $K$ admits a base and handle presentation $(B, H)$ with $k$ bases, and $\pi_{1}(S^{n+2}-K)\cong \mathbb{Z}$. Then $(B, H)$ is $(k-1)$-weakly stably equivalent to an unknotted ribbon knot.
\end{theorem}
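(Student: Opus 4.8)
The plan is to argue on the level of \emph{ribbon data} and to realise every simplification through the stable-equivalence moves---add base-with-handle, handle pass, handle slide---together with their inverses and simple equivalence. Encode $(B,H)$ as a finite connected graph $G$ with the $k$ bases as vertices and the handles as edges; $G$ is connected because $K$ is a single knot, and its first Betti number is the genus $g=|H|-k+1$. By the Wirtinger-type computation of Section~\ref{JR} (Theorems~\ref{KW} and \ref{hid}, applied inside a ball $D^{n+2}$ containing the ribbon solid $R$), $\pi_1(S^{n+2}-K)$ is generated by one meridian $x_i$ per base, subject to one relation $x_{b_j}=w_j x_{a_j} w_j^{-1}$ per handle $H_j$, where $w_j$ is the word determined by the signed sequence of bases that $H_j$ passes through. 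Because each $w_j$ may involve \emph{all} the meridians, one cannot eliminate generators formally along a spanning tree; thus $\pi_1\cong\Z$ is a genuinely global hypothesis, strictly stronger than the automatic statement $H_1\cong\Z$ of Remark~\ref{Hcyclic}, and it is the whole source of nontriviality in the theorem.

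First I would set up a dictionary between the geometric moves and transformations of this presentation. A handle slide of an end of $H_{j'}$ along $H_j$ left-multiplies the passing word of $H_{j'}$ by $w_j^{\pm1}$ and correspondingly multiplies its relator by a conjugate of the relator of $H_j$---a Nielsen/Tietze transformation. A simple equivalence freely reduces passing words (by the Lemma characterising a ribbon presentation up to simple equivalence by its passing sequences \emph{up to reversal}, which for $n\ge2$ rules out hidden knotting of the $1$-dimensional cores), so a handle whose word freely reduces to the empty word is simply equivalent to a trivial handle. A handle pass is invisible to the presentation and serves only to unlink handles from one another. Finally, the inverse of the add-base-with-handle move deletes a base together with one generator and its defining relation, provided that base is a leaf of $G$ carrying a single handle end and no ribbon intersection; and attaching the $k-1$ trivial handles performs the connected sums $K\mapsto K\#T_{k-1}$ required by $(k-1)$-weak stable equivalence while contributing $k-1$ free (trivial) relations that supply the stabilisation room used below.

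The core of the argument is to collapse $G$ to a single vertex in a way governed by $\pi_1\cong\Z$. Fix a spanning tree $T$ and process it from the leaves inward. At a leaf base, handle slides push the passing words of every handle that currently meets that base onto the trivial handle parked there and onto the tree edge leaving it; the global relation $\pi_1\cong\Z$ guarantees that, after these slides, the words recorded on the edges incident to the leaf are consequences of the remaining relations, so simple equivalence clears the ribbon intersections from the leaf and the inverse add-base-with-handle move prunes it. Each pruning removes one base and one (tree) handle, spends one of the $k-1$ trivial handles, and preserves both the stable-equivalence class and the genus. After $k-1$ steps we are left with a single base carrying $m$ handles, and the relations transported to this stage now read as words in the single meridian $x$ that, by the hypothesis, are trivial in $\langle x\rangle$; the corresponding loop handles then freely reduce to trivial handles by simple equivalence, yielding the unknotted ribbon knot with one base and $m$ trivial handles.

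I expect the crux to be exactly the coupling of algebra and order in this collapse. It must be shown that the slides forced by $\pi_1\cong\Z$ can always be performed so as to clear a leaf without generating new ribbon intersections elsewhere that obstruct the next pruning, and---the most delicate bookkeeping---that precisely one trivial handle per eliminated base is needed, giving the stated count $k-1$. At bottom this asserts that, once stabilisation is allowed (through the add-base-with-handle move and the $k-1$ trivial handles), the rigidity responsible for the Andrews--Curtis phenomenon disappears: a presentation of $\Z$ assembled from ribbon conjugation-relators can be driven to $\langle x\mid\ \rangle$ by the permitted transformations, and every such algebraic step is faithfully mirrored by a handle move. Making this mirroring exact, including the control of the self-linking framings of the loop handles that $\pi_1$ cannot see, is where the real work lies.
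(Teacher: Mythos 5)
Your plan correctly guesses the shape of the answer (attach $k-1$ trivial handles, use them to absorb the bases $B_2,\dots,B_k$ into a single base, then observe that a one-base presentation is stably trivial), but it defers the one step that actually uses the hypothesis $\pi_1(S^{n+2}-K)\cong\mathbb{Z}$, and that step is the whole theorem. Concretely: you assert that ``the global relation $\pi_1\cong\mathbb{Z}$ guarantees that, after these slides, the words recorded on the edges incident to the leaf are consequences of the remaining relations, so simple equivalence clears the ribbon intersections from the leaf.'' This conflates two different equivalences. By the paper's lemma on ribbon data, simple equivalence (for $n\geq 2$) realizes exactly homotopy of the handle core in the complement of the boundary spheres $\cup_i\partial B_i$; it can therefore only alter a passing word within its class in the \emph{free} group on the bases (free insertion/cancellation), while handle slides prepend or append another handle's word. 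Being ``a consequence of the Wirtinger relators'' in $\pi_1(S^{n+2}-K)$ is a far weaker condition and is not realizable by these moves in any formal way --- this is precisely the Andrews--Curtis-type obstruction you name in your last paragraph and then leave unresolved. Everything before that point is move-bookkeeping that does not touch the hypothesis, so the proposal as written contains no proof.

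The paper closes this gap geometrically rather than algebraically, and that is the idea your proposal is missing. One slides each new trivial handle $h_i$ so that it runs from $B_1$ to $B_i$ with total signed passage through the bases equal to zero; the associated loop $\gamma_i$ is then null-homologous, hence (since $\pi_1\cong\mathbb{Z}$ is abelian) null-homotopic, hence bounds a smooth immersed $2$-disk in $S^{n+2}-(K\cup h_2\cup\dots\cup h_k)$. Because $n\geq 2$, general position for a $2$-disk against the $1$-dimensional handle cores and the codimension-one bases means that dragging $h_i$ across this disk costs only handle passes and changes of ribbon intersections (simple equivalences), after which $h_i$ joins $B_1$ to $B_i$ passing through no bases; handle slides along the cleaned $h_i$ then empty and delete $B_i$. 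In other words, the hypothesis is spent once, to produce an actual disk, and the disk --- not a Tietze calculus on the presentation --- is what licenses the moves. If you want to rescue your leaf-pruning scheme, you would need to insert exactly this construction at each pruning step; without it, the claim that the leaf's ribbon intersections can be cleared is unsupported, and your own closing paragraph correctly identifies this as the unproven core. (The count, incidentally, is then immediate: one trivial handle per base $B_2,\dots,B_k$, i.e.\ $k-1$, with no delicate bookkeeping.)
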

\bpr Let $h_{2},..., h_{k}$ be $(k-1)$ new trivial handles, with both ends connected to base $B_{1}$. Perform handle slides so that $h_{i}$ starts on $B_{1}$ and terminates on $B_{i}$, twisting them at the end so that they pass through bases equally many times positively as negatively. Let $\gamma_{i}$ be a curve which travels to $B_{1}$ without passing through any bases, then follows the handle $h_{i}$ to $B_{i}$ (traveling through the same bases in the same order) and returns to the basepoint for the fundamental group without passing through any additional bases. Since $\pi_{1}(S^{n+2}-K)\cong \mathbb{Z}$, and since by construction $\gamma$ is homologically trivial, $\gamma$ bounds a disk in $S^{n+2}-(K\cup h_{2}\cup ... \cup h_{k})$. This disk may be taken to be smooth and immersed by various approximation theorems, \cite[Theorem 10.16, Theorem 10.21]{Lee} and \cite[Theorem 2.5]{Adachi}. By general position, this disk meets itself at most in isolated points. Sliding $h_{i}$ along this disk we may, by handle passes only, make $h_{i}$ into a handle connecting $B_{1}$ to $B_{i}$ without passing through any bases. Now all handles in $H$ may be moved by handle passes so they only pass through base $B_{1}$, after which all other bases may be deleted after appropriate handle slides. But a ribbon presentation with a single base is always stably equivalent to the unknotted ribbon presentation. \epr

For a ribbon $n$-knot $K$ with $n\geq 2$ and ribbon presentation $(B, H)$, there is a natural presentation for the knot quandle $Q(S^{n+2}, K)$. This presentation is generated by the bases in $B$ and has one relation for each handle $h_{j}\in H$. The relation $r_{j}$ has the following form: let $b_s$ and $b_e$ be the starting and ending bases for $h_j$, and let $h_j$ follow the word $w=b_1 b_2... b_n$ (here $w$ is a word in the bases in $B$, where each base can appear positively or negatively, depending on whether the handle passes through it in a direction that agrees with the normal vector or opposite to it) as it moves from $b_s$ to $b_e$. Then the relation $r_j$ is given by the equation $b_e=b_s^{\overline{b_1}\overline{b_2}...\overline{b_n}}$. Geometrically, the generator corresponding to a base $b\in B$ is given by an equivalence class of paths containing a path that goes from the basepoint to the base $b$ without passing through any other bases.

\begin{theorem}
A ribbon $n$-knot, $n\geq 2$, with ribbon presentation $(B, H)$, has a knot quandle whose presentation is as described in the previous paragraph.
\end{theorem}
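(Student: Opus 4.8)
The plan is to derive the asserted presentation from the Wirtinger-type presentation furnished by Theorem \ref{KW}, using the ribbon structure to identify the faces of a diagram with the bases and to telescope the double-point relations into one relation per handle. Since a knot in $S^{n+2}$ may be isotoped into a ball, I would first arrange $K$ to lie in $D^{n+1}\times I\subset S^{n+2}$ with $D^{n+1}$ simply connected; the extra ball glued on to recover $S^{n+2}$ is simply connected and changes neither $\pi_1$ nor the quandle, so Theorem \ref{KW} applies and computes $Q(S^{n+2},K)=Q(D^{n+2},K)$. Projecting the ribbon solid $R$ generically to $D^{n+1}$, one may choose the projection (stacking along the $I$-direction) so that the double point set of $\pi(K)$ is carried precisely by the ribbon intersections, where a handle $H_j$ passes through the interior of a base $B_i$. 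Because ribbon links are orientable, $K$ and the ambient space are orientable, so all sheets can be co-oriented and the Wirtinger relation $x=z^{y}$ attached to each double-point component is unambiguous.

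Next I would read off the faces of this diagram. The boundary sphere $\partial B_i$ of each base, broken where handles are attached to it and where it undercrosses a handle, contributes a collection of faces; the side $H_j(D^1\times \partial D^n)$ of each handle, broken at each ribbon intersection along its length, contributes the remaining faces. The key reduction is to show, using the double-point relations, that (i) all faces arising from a single base $\partial B_i$ are identified to one generator $b_i$ --- geometrically the class of a path from the basepoint to $B_i$ avoiding the other bases --- and that (ii) the faces along a handle $H_j$ form a chain in which each passage through a base $B_i$ conjugates the current generator by $b_i$, the exponent carrying a bar (i.e. the inverse quandle operation) according to whether the handle passes through $B_i$ along its normal or against it. Composing these elementary steps as $H_j$ runs from its starting base $b_s$ through $b_1,\dots,b_n$ to its terminal base $b_e$ telescopes into the single relation $b_e=b_s^{\overline{b_1}\overline{b_2}\cdots\overline{b_n}}$.

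Finally I would verify completeness: every face is absorbed into some $b_i$ by (i), every double-point component lies along a handle and is consumed by the telescoped relation of (ii), and the points of the crossing set that are not pure double points introduce no new relations, exactly as in the proof of Theorem \ref{KW}. This leaves precisely one generator per base and one relation $r_j$ per handle, which is the presentation described in the preceding paragraph.

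I expect the main obstacle to be the bookkeeping in step (ii): checking that the chain of Wirtinger relations along a handle telescopes with the correct placement and signs of the conjugating exponents, and that the over/undercrossing data at each ribbon intersection (the handle sheet broken, the base sheet whole) consistently makes the base generator the conjugator rather than the conjugand. One must confirm that the chosen normal conventions match the convention $b_e=b_s^{\overline{b_1}\cdots\overline{b_n}}$, including the case where $H_j$ passes through the same base several times, so that repeated letters appear in the word $w$ with the correct signs. The identification in (i) also needs care when a handle both attaches to and passes through the same base; but since for $n\geq 2$ the meridian of a base is well defined up to conjugation and every sheet of $\partial B_i$ shares this meridian, these faces still collapse to the single generator $b_i$.
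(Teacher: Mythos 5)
Your proposal is correct and follows essentially the same route as the paper's own (very terse) argument: the paper likewise points to repeated Van Kampen, or equivalently to a projection under which each base and handle embeds, so that the double point set comes from the ribbon intersections and Theorem \ref{KW} yields the Wirtinger-type presentation that is then telescoped handle by handle. One caveat inside the bookkeeping you flag as the main obstacle: for a generic projection the handle tube passes from one side of a base's hyperplane to the other, so the sphere $\partial B_i$ is itself broken at one of the two double-point components created by each ribbon intersection (it is not left whole); the extra relations this produces are precisely what collapse all faces of $\partial B_i$ to the single generator $b_i$, after which the telescoping along each handle proceeds as you describe.
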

\bpr
This is proved by repeated application of the Van Kampen theorem. Alternately, it is not difficult to write down the double point curves in a projection of a ribbon knot where each base and handle is embedded under the projection. Then the double point curves correspond to handles passing through bases, and the relations may be worked out using Thm. \ref{KW}.
\epr

\begin{remark}
Let $Q(S^{n+2}, K)$ be the quandle of a ribbon knot $K$ with presentation $(B, H)$. To specify an element of $Q(S^{n+2}, K)$, it suffices to specify a path $\gamma$ by assuming that $\gamma$ starts at the basepoint for the quandle, then specifying the ordered signed intersections of the path with the bases, and finally specifying on which base the path terminates.
\end{remark}

This follows from general position arguments.

\begin{remark}
We will denote the generator of a quandle corresponding to a base $b$ by using the same symbol; whether we are referring to the base or the quandle element will be clear from context.
\end{remark}

Throughout the following, we will make extensive use of the following fact, which forms part of the geometric definition of the quandle of a knot complement. Let $K$ be a knot in $S^{n+2}$, and let $x$ be the basepoint for the quandle $Q(S^{n+2}, K)$. Let $N(K)$ be the closure of a tubular neighborhood of $K$. Recall that elements of the quandle are equivalence classes of paths $\gamma$ which start on $x$ and terminate on $\partial N(K)$. Two paths $\gamma, \gamma'$ are equivalent as quandle elements iff they are homotopic through paths starting on $x$ and terminating on $\partial N(K)$. Therefore, there exists a disk $D_{\gamma, \gamma'}$ whose boundary consists of the union of $\gamma \cup \gamma'$ with some path on $\partial N(K)$. By various approximation theorems, \cite[Theorem 10.16, Theorem 10.21]{Lee} and \cite[Theorem 2.5]{Adachi}, we may assume this disk is smooth and immersed, and therefore by general position we may assume that $D_{\gamma, \gamma'}$ is immersed and embedded except at a finite number of points, provided that $n\geq 2$. In fact for $n\geq 3$, general position arguments show that the disk can be taken to be embedded. Thus, the homotopy may be taken to be a homotopy through embedded paths, that is, a smooth isotopy of the path which traces out the disk $D_{\gamma, \gamma'}$.

We will repeatedly make use of the existence of such isotopies. All our applications are essentially uses of the following construction. Let $(B, H)$ be a ribbon presentation for $K$. Let $h, h'$ be handles such that $(B, H\cup \{h\})$ and $(B, H\cup \{h'\})$ are ribbon presentations. Recall that a handle is an embedding of $D^{1}\times D^{n}$ into $S^{n+2}$ such that $\{0,1\}\times D^{n}$ is embedded in elements of $B$. We will refer to $h|_{[0,1]\times\{0\}}$ as the \emph{core} of $h$. Observe that once the core of $h$ is specified, the handle itself is specified up to isotopy. Now suppose that $h, h'$ have the property that $h|_{[0,1/2]\times\{0\}}=h'|_{[0,1/2]\times\{0\}}$. Let $\gamma=h|_{[1/2,1]\times\{0\}}, \gamma'=h'|_{[1/2,1]\times\{0\}}$. Now $\gamma, \gamma'$ determine quandle elements for $Q(S^{n+2}, K)$.

\begin{lemma}
Let $K$, $(B, H)$, $h, h', \gamma, \gamma'$ be as described in the previous paragraph, and suppose that $\gamma\cong\gamma'$ as elements of $Q(S^{n+2}, K)$. Then there is a stable equivalence between $(B, H\cup \{h\})$ and $(B, H\cup \{h'\})$ which involves only performing handle slides isotoping $h$ to agree with $h'$.\label{qhs}
\end{lemma}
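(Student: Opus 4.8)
The claim is that if the two half-cores $\gamma = h|_{[1/2,1]\times\{0\}}$ and $\gamma' = h'|_{[1/2,1]\times\{0\}}$ represent the same quandle element, then $(B, H\cup\{h\})$ and $(B, H\cup\{h'\})$ are stably equivalent through handle slides alone, realizing the isotopy of $h$ onto $h'$. The plan is to translate the quandle equivalence $\gamma\cong\gamma'$ into a geometric isotopy and then interpret that isotopy in the language of handle slides. The key input is the paragraph preceding the lemma: since $\gamma$ and $\gamma'$ are equivalent as quandle elements, they are homotopic through paths starting at the basepoint and ending on $\partial N(K)$, and for $n\geq 2$ this homotopy can be promoted to a smooth isotopy tracing out an immersed (and for $n\geq 3$ embedded) disk $D_{\gamma,\gamma'}$ whose boundary is $\gamma\cup\gamma'$ together with an arc on $\partial N(K)$.

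**Main steps.**
First I would fix the common initial segment: both handles agree on $[0,1/2]\times\{0\}$, so any isotopy from $\gamma$ to $\gamma'$ extends to an isotopy of the full core $h|_{[0,1]\times\{0\}}$ to $h'|_{[0,1]\times\{0\}}$ that is stationary on the first half and stationary at the starting base $b_s$. Since the ambient endpoint lies on $\partial N(K)$, which is identified with $\partial$ of the ending base (or the tubular neighborhood of $K$), I must arrange that throughout the isotopy the moving endpoint stays on the appropriate base boundary; this is precisely guaranteed by the fact that the homotopy preserves the endpoint condition on $\partial N(K)$. Second, I would invoke the principle stated earlier in the excerpt that once the core of a handle is specified, the handle itself is determined up to isotopy. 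Thus the isotopy of cores lifts to an isotopy of the thickened handles $h$ to $h'$, keeping the other ends fixed on their bases. Third, and this is the crucial interpretive step, I would observe that such an endpoint-fixed isotopy of a handle's core is, by definition, exactly a handle slide in the sense defined in the stable-equivalence moves: sliding one end of a handle along a path that deforms $\gamma$ into $\gamma'$ is a composition of the elementary handle-slide and handle-pass moves. The disk $D_{\gamma,\gamma'}$ provides the tracking surface along which the slide is performed.

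**The main obstacle.**
The delicate point is that the disk $D_{\gamma,\gamma'}$ is in general only \emph{immersed} for $n=2$, not embedded; its self-intersections occur at isolated points by general position. The difficulty is to show that sliding $h$ across a merely immersed disk can still be realized using only the allowed moves (handle slides and handle passes) rather than introducing genuinely new ribbon intersections that change the presentation data. I would handle this by noting that each transverse self-intersection point of $D_{\gamma,\gamma'}$ corresponds precisely to a moment where the sliding handle passes through itself or through another handle's interior --- which is exactly what the \emph{handle pass} move permits. Hence the immersed (as opposed to embedded) nature of the disk is accounted for by the handle-pass moves, and no move outside the stable-equivalence repertoire is needed. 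The endpoint arc lying on $\partial N(K)$ accounts for the handle-slide moves proper. I expect verifying carefully that every self-intersection of $D_{\gamma,\gamma'}$ translates into an allowable handle pass (and that no ribbon-intersection data with the bases is created that cannot be removed by a further handle slide) to be the technical heart of the argument; for $n\geq 3$ the disk is embedded and this subtlety disappears entirely, so the lemma is essentially immediate there.
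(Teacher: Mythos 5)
Your overall strategy coincides with the paper's: take the homotopy disk $D_{\gamma,\gamma'}$ realizing the quandle equivalence, promote it to an isotopy of the core of $h$ onto the core of $h'$ fixing the common first half, and read off the motion of the terminal endpoint along $\partial N(K)$ as handle slides. The difficulty is in your treatment of what you call the main obstacle, which goes astray in two ways. First, the isolated self-intersections of the immersed disk do not force handle passes: a self-intersection merely records that the moving arc visits the same ambient point at two \emph{different} parameter times, and by genericity each individual arc $\gamma_t$ stays embedded (an arc meeting an arc is a $1+1 < n+2$ phenomenon), so the slide proceeds through embedded cores and no handle pass is invoked. If handle passes were genuinely required, your argument would contradict the very statement being proved, which asserts that only handle slides occur.

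Second, you omit the step the paper singles out as the essential one: before sliding, $D_{\gamma,\gamma'}$ must be perturbed so that its interior is disjoint from $h$ and $h'$. This is possible by general position (a $2$-disk and a $1$-dimensional core in an $(n+2)$-manifold, $n\geq 2$, generically miss each other), and it is available precisely because the homotopy takes place in $S^{n+2}-K$, where $h$ and $h'$ impose no constraint since they are not handles of the presentation of $K$. Without this perturbation the slide could drive $h$ through $h'$ or through itself, which is exactly the failure mode, highlighted in the remark immediately following the lemma, that prevents the argument from applying to handles already in $H$. Once the disk avoids $h$ and $h'$, the only events during the slide are crossings of base interiors (absorbed by simple equivalence of ribbon presentations) and motion of the end of the core along $\partial N(K)$, which are the handle slides; this is the paper's proof.
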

\bpr Let $D_{\gamma, \gamma'}$ be the homotopy disk giving the equivalence of $\gamma, \gamma'$ as elements of $Q(S^{n+2}, K)$. This is a disk in the complement of $K$. This disk may be perturbed so that its interior does not meets $h$ or $h'$, by general position arguments. Now $D_{\gamma, \gamma'}$ intersects itself only in isolated points. Therefore, we may slide $h|_{[1/2, 1]\times\{0\}}$ along this disk until it agrees with $h'|_{[1/2, 1]\times\{0\}}$. During this process we will be sliding the end of the core of $h$ around on $\partial N(K)$ so we may perform some handle slides. \epr

\begin{remark}
This argument cannot be extended to an argument for an analogous result for handles $h$ and $h'$ that are already part of the ribbon presentation for $K$. This is because in that case, the disk $D_{\gamma, \gamma'}$ might pass through the interior of the handle $h$ or $h'$ themselves. This cannot happen in the above case because $\gamma, \gamma'$ are homotopic using a homotopy in the complement of $K$, and, therefore, the homotopy cannot move the path through the interior of $h$ or $h'$ since they are not handles for the presentation of $K$.
\end{remark}

\begin{lemma}
Let $K$ be a ribbon $n$-knot, $n\geq 2$, with ribbon presentation $(B, H)$. Suppose that the quandle $Q(S^{n+2}, K)$ is generated by $B-\{ b\}$. Then, by adding a trivial handle to $(B, H)$, we can merge $b$ and some $b'\in B-\{ b\})$ into a single base through stable equivalence.\label{reduce}
\end{lemma}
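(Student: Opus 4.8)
The plan is to read the hypothesis through the natural quandle presentation of a ribbon knot (generators the bases, one relation $b_e=b_s^{\overline{u}}$ per handle) and then realize the algebraic redundancy of $b$ by a single stabilization together with the handle-slide principle of Lemma \ref{qhs}.

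First I would translate the hypothesis. Since $Q(S^{n+2},K)$ is generated by $B-\{b\}$, the generator $b$ lies in the subquandle they generate, so it can be written as $b=(b')^{w}$ for some $b'\in B-\{b\}$ and some word $w$ in the generators $B-\{b\}$ (a product of conjugations). By the remark describing a quandle element through its ordered signed intersections with the bases together with a terminal base, the element $(b')^{w}$ is represented by a path $q$ that terminates on $b'$ and passes through the bases named in $w$ with the prescribed signs; moreover $q$ is homotopic, through paths from the basepoint to $\partial N(K)$, to the direct meridian path $p_b$ representing $b$. Let $D$ be the homotopy disk realizing $p_b\cong q$; by the approximation theorems and general position it may be taken smooth and, for $n\ge 2$, immersed with only isolated self-intersections.

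Next I would introduce the stabilization and reroute it. Add a trivial handle $h_0$ with both ends on $b$; this is the single permitted trivial handle, it amounts to a connected sum with a trivial $S^1\times S^{n-1}$ and leaves the quandle unchanged. Anchoring one half of the core of $h_0$ at $b$, its free half currently represents the quandle element $b$. Since $q$ represents this same element, Lemma \ref{qhs} lets me slide the free end off $b$ and onto $b'$ so that the core follows a push-off of $q$; call the result $h_0'$. The handle $h_0'$ joins $b$ to $b'$ and realizes the relation $b'=b^{\overline{w}}$, which already holds in $Q(S^{n+2},K)$, so $(B,H\cup\{h_0'\})$ presents $K$ up to the one stabilization. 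Using handle slides (the third move of stable equivalence) along $h_0'$, I then slide every remaining handle-end lying on $b$ over to $b'$, so that afterward $b$ is joined to the rest of the configuration only through $h_0'$. Finally I would merge by sliding the base $b$, which now carries $h_0'$ alone, along $h_0'$ onto $b'$, absorbing the two disks into one; this lowers $|B|$ by one while preserving the quandle, as required.

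The hard part will be this last step, because $b'\neq b$ as quandle elements forces $w$, and hence the core of $h_0'$, to wind essentially through the bases named in $w$; consequently the naive union $b\cup h_0'\cup b'$ meets those bases and is not a priori an embedded disk disjoint from the others. The real content is therefore to show that the base-slide of $b$ along the winding handle $h_0'$ is realizable by stable-equivalence moves and produces a genuine ribbon presentation with one fewer base. I expect this to be handled exactly as in the argument for ribbon knots with infinite cyclic group above: the self-intersections of $D$ are isolated for $n\ge 2$, so the winding of $h_0'$ can be pushed across the intervening bases by handle passes guided by $D$, consuming the winding as $b$ traverses it. A secondary point to verify is that clearing the handle-ends off $b$ leaves no other handle passing essentially through $b$; this too should follow from general position once the slides above are in place.
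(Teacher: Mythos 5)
Your overall strategy is the same as the paper's: use the hypothesis to write $b=(b')^{w}$ with $w$ a word in the generators $B-\{b\}$, represent this element by a path to $b'$ that avoids $b$, add a trivial handle with both ends on $b$, reroute its free half along the quandle homotopy disk via Lemma \ref{qhs} so that it joins $b$ to $b'$, and then clean up and absorb $b$ into $b'$. The translation of the hypothesis, the placement of the stabilizing handle, and the use of Lemma \ref{qhs} all match the paper's argument.

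The gap is in your cleanup step. You move the handle \emph{ends} attached to $b$ over to $b'$ by handle slides, and then assert that the absence of any "other handle passing essentially through $b$" should "follow from general position." This is where the argument fails: a handle $h_j\in H$ whose interior meets the interior of $b$ in a ribbon intersection cannot be made disjoint from $b$ by a general-position perturbation. Such an intersection is an essential feature of the presentation --- it contributes a letter to the word of $h_j$ and a relation to $Q(S^{n+2},K)$ --- and in a typical ribbon presentation most bases do have handles passing through them. These intersections must be actively relocated: each ribbon intersection of $h_j$ with $b$ is pushed along $b$ into the attaching region of the new handle, through its tube, and out onto $b'$, which is a sequence of handle passes (move (2) of stable equivalence). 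It is precisely here that one needs the rerouted handle to avoid the interior of $b$, a property your construction does guarantee since $w$ involves only bases in $B-\{b\}$. This relocation is the step the paper's proof performs and yours omits. Your instinct that the final merge is delicate because the connecting handle winds through the bases named in $w$ is reasonable --- the paper is terse there --- but without first emptying the interior of $b$ by handle passes the merge cannot even be attempted, so you should promote that "secondary point" to a primary one and replace the appeal to general position with the handle-pass argument.
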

\bpr Let $h$ be the new trivial handle, with both ends on $b$. We may isotope $h$ so that the midpoint of the core of $h$ passes through the basepoint $x$ for the quandle while it remains a trivial handle. Now the path that goes from the basepoint to $b$ is equivalent in the quandle to a path $\gamma$ from the basepoint to some other base $b'$ which does not pass through the base $b$. There is a quandle homotopy disk connecting these two paths which does not intersect $h$ (since all the relations in the quandle are geometrically given by the handles in $H$). We may slide the portion of $h$ between its midpoint and terminal point along this disk so that it joins $b$ and $b'$ and follows the path $\gamma$, using the method described in Lemma \ref{qhs}. Since $\gamma$ does not pass through $b$, we can now perform handle passes through $h$ so that any handles passing through $b$ are moved to instead pass through $b'$. Then the two bases can easily be joined. \epr

\begin{corollary}
Every ribbon $n$-knot $K$, $n\geq 2$, with ribbon presentation $(B, H)$, is $(|B|-1)$-weakly stably equivalent to a ribbon $n$-knot with presentation $(B', H')$, where $B'\subseteq B, H\subseteq H'$, with the property that for all $b\in B'$, the subquandle generated by $B'-\{ b\}$ is a proper subquandle of $Q(S^{n+2}, K)$.\label{redq}
\end{corollary}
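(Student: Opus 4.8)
The plan is to iterate Lemma \ref{reduce} until no further reduction is possible and then read off the desired property from the stopping condition. Call a base $b$ of a ribbon presentation \emph{redundant} if $Q(S^{n+2}, K)$ is generated by the remaining bases; this is exactly the hypothesis of Lemma \ref{reduce}, and saying that \emph{no} base is redundant is exactly the conclusion we want, namely that for each surviving $b$ the subquandle generated by the other bases is proper. So I would proceed as follows: starting from $(B, H)$, as long as the current presentation has a redundant base $b$, apply Lemma \ref{reduce} to $b$, which attaches one trivial handle and, through stable equivalence, merges $b$ with some other base $b'$ into a single base. Labelling the merged base $b'$, the surviving base set is the previous one with $b$ deleted, hence a subset of it; iterating, the terminal base set $B'$ satisfies $B' \subseteq B$. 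The process halts precisely when no base is redundant, i.e. when for every $b \in B'$ the subquandle generated by $B' - \{b\}$ is a proper subquandle of $Q(S^{n+2}, K)$, which is the assertion.

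Two facts make this iteration legitimate. First, one must check that $Q(S^{n+2}, K)$ is genuinely invariant throughout, so that the phrase ``subquandle of $Q(S^{n+2}, K)$'' is unambiguous at every stage. This holds because attaching a trivial handle replaces $K$ by $K \# T$ with $T = S^1 \times S^{n-1}$ trivially embedded, and in the quandle presentation associated to a ribbon presentation a trivial handle contributes only the trivial relation $b = b$ and no new generator; since stable equivalence preserves the knot type, the knot quandle stays isomorphic to $Q(S^{n+2}, K)$ at every step. Consequently the ribbon quandle presentation theorem applies to each intermediate presentation, with its bases as generators, and ``redundant base'' is well defined at each stage rather than only for $(B, H)$. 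Second, the base count is a strictly decreasing positive integer, so the iteration terminates after at most $|B| - 1$ steps; it certainly halts no later than reaching a single base, at which point $B' - \{b\} = \emptyset$ generates only a proper subquandle and the stopping condition holds automatically.

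For the quantitative claim, each reduction step attaches exactly one trivial handle, so if the process halts after $j \leq |B| - 1$ steps the terminal presentation $(B'_0, H'_0)$ is $j$-weakly stably equivalent to $(B, H)$. To reach exactly $|B| - 1$ attached handles as stated, I would set $(B', H') = (B'_0, H'_0 \cup \{\, |B| - 1 - j \text{ trivial handles}\,\})$; since trivial handles add no quandle generators and no nontrivial relations, the generating set is still $B'_0$ and the irredundancy property is unaffected, while now exactly $|B| - 1$ trivial handles separate $(B, H)$ from $(B', H')$. The inclusions $B' \subseteq B$ and $H \subseteq H'$ then follow from the structure of the reduction, which only deletes or merges bases and only attaches handles. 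The main obstacle is not the iteration but the bookkeeping of the second paragraph: one must ensure that the invariance of $Q(S^{n+2}, K)$ and the applicability of the ribbon quandle presentation theorem are maintained after each merge, so that the redundancy test of Lemma \ref{reduce} can be reapplied to the new, smaller presentation.
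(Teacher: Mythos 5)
Your proposal is correct and follows exactly the route the paper intends: the corollary is stated as an immediate consequence of Lemma \ref{reduce}, obtained by iterating that lemma until no base is redundant, with each step consuming one trivial handle and one base. Your additional bookkeeping (invariance of the quandle under trivial-handle attachment and termination of the iteration) is sound and only makes explicit what the paper leaves implicit.
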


\begin{theorem}
Let $K, K'$ be ribbon $n$-knots, $n\geq 2$, with ribbon presentations $(B, H), (B, H')$, respectively, of the same genus, such that the identity map on $B$ induces a quandle isomorphism between $Q(S^{n+2}, K)$ and $Q(S^{n+2}, K')$. Then they are $|H|$-weakly stably equivalent. \label{isoeq}
\end{theorem}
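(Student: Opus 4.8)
The plan is to use the $|H|$ extra trivial handles as spare parts that can be reshaped, one at a time, into copies of the other presentation's handles by means of Lemma \ref{qhs}. The whole point is that Lemma \ref{qhs} applies only to a handle that is \emph{not yet} part of the ambient knot (as emphasized in the remark following it), and this is precisely why weak stable equivalence — attaching fresh trivial handles — is the natural setting: each handle is reshaped while it sits outside the ambient knot. Write $H=\{h_1,\dots,h_m\}$ and $H'=\{h'_1,\dots,h'_m\}$; since $K$ and $K'$ share the base set $B$ and have the same genus, the formula for the homeomorphism type of a ribbon $n$-knot gives $|H|=|H'|=m$. Recall also that a handle $h'_j$ running from a base $b'_s$ through a word $w'_j$ of bases to a base $b'_e$ contributes exactly the quandle relation equating the element carried by a short path to $b'_s$ with the element carried by a path that runs through $w'_j$ and terminates on $b'_e$.

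First I would attach $m$ trivial handles $t_1,\dots,t_m$ to $(B,H)$, placing both feet of $t_j$ on $b'_s$ and arranging, as in Lemma \ref{reduce}, that the midpoint of each core passes through the quandle basepoint. Then, for $j=1,\dots,m$ in turn, I convert $t_j$ into a handle $\hat h_j$ carrying the same combinatorial data (start base, word, end base) as $h'_j$. To apply Lemma \ref{qhs}, take the ambient presentation to be everything except $t_j$, namely $(B, H\cup\{\hat h_1,\dots,\hat h_{j-1}\}\cup\{t_{j+1},\dots,t_m\})$. The already-reshaped handles $\hat h_i$ ($i<j$) impose exactly the relations of the $h'_i$, each of which already holds in $Q(S^{n+2},K)$ because the identity on $B$ identifies $Q(S^{n+2},K)$ with $Q(S^{n+2},K')$; the remaining trivial handles impose nothing. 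Hence the ambient quandle equals $Q(S^{n+2},K)$ at every stage. In that quandle the element carried by the unmodified second half-core of $t_j$ (a path to $b'_s$) equals the element carried by the target second half-core (a path through $w'_j$ ending on $b'_e$), precisely because this equality is the defining relation of $h'_j$, valid in $Q(S^{n+2},K')=Q(S^{n+2},K)$. Lemma \ref{qhs} then supplies a stable equivalence, via handle slides, reshaping $t_j$ into $\hat h_j$. After all $m$ steps I reach a presentation $P=(B,H\cup\{\hat h_1,\dots,\hat h_m\})$ that is stably equivalent to $(B,H)$ with $m$ trivial handles attached.

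Running the identical argument with the roles of $K$ and $K'$ interchanged yields a presentation $P'=(B,H'\cup\{\hat h^{\circ}_1,\dots,\hat h^{\circ}_m\})$, stably equivalent to $(B,H')$ with $m$ trivial handles attached, where each $\hat h^{\circ}_j$ carries the combinatorial data of $h_j$. Now $P$ and $P'$ share the base set $B$ and have the same multiset of handle data, namely the data of $H$ together with the data of $H'$. Since $S^{n+2}$ is simply connected and $n\geq 2$, the uniqueness lemma for ribbon presentations (determination up to simple equivalence by $|B|$ and the sequence-of-bases data of each handle) shows that $P$ and $P'$ are simply equivalent, hence stably equivalent. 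Chaining the three equivalences gives that $(B,H)$ and $(B,H')$ become stably equivalent after attaching $m=|H|$ trivial handles, which is the claim.

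I expect the main obstacle to be the bookkeeping in the second step: confirming that the ambient quandle is literally unchanged as the trivial handles are reshaped (so that the relation of $h'_j$ is always available), and arranging the first half-cores and the basepoint so that the hypothesis $h|_{[0,1/2]\times\{0\}}=h'|_{[0,1/2]\times\{0\}}$ of Lemma \ref{qhs} holds verbatim. The essential conceptual point, and the reason the statement is cast in terms of weak stable equivalence rather than ordinary stable equivalence, is the obstruction recorded after Lemma \ref{qhs}: one cannot reshape a handle already belonging to the presentation, because its own interior may block the homotopy disk. Attaching fresh trivial handles circumvents this, since every handle is modified precisely while it is excluded from the ambient knot.
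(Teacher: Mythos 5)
Your proposal is correct and follows essentially the same route as the paper's proof: attach one trivial handle per handle of $H'$, pass its core midpoint through the quandle basepoint, and use Lemma \ref{qhs} to slide its latter half so it copies the combinatorial data of the corresponding $h'_j$, then symmetrize and identify the two resulting presentations. Your added bookkeeping --- checking that the ambient quandle is unchanged at each stage and invoking the uniqueness-up-to-simple-equivalence lemma at the end --- only makes explicit what the paper leaves implicit.
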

\bpr Let $h'\in H'$ connect $b_0, b_1$. Attach a trivial handle $h$ to $H$ with both ends on $b_0$. Let $h$ be isotoped so that the midpoint of its core passes through the basepoint $x$ for the quandle. The path from $x$ to $b_0$ following the core of $h$ and the path from $x$ to $b_1$ following $h'$ are homotopic in the quandle $Q(S^{n+2}, K)$; thus we may slide $h$'s latter half along the homotopy disk, as described in Lemma \ref{qhs}. After this is done, $h$ will pass through exactly the same bases as $h'$. Repeat this for each handle in $H'$. Then perform the same operation using the trivial handles we have added to $H'$, modifying them so that all the handles originally in $H$ are now in $H'$ as well. We now have all the handles in $H$ in $H'$ as well, and all the handles in $H'$ have been added to $H$. Thus we have modified the two ribbon presentations so they have identical sets of bases and handles. \epr

\begin{theorem}
Let $K, K'$ be ribbon $n$-knots, $n\geq 2$, with ribbon presentations of the same genus $(B, H), (B', H')$, respectively. Let $f:B\rightarrow Q(S^{n+2}, K')$ be a function which extends to a quandle isomorphism $Q(S^{n+2}, K)\rightarrow Q(S^{n+2}, K')$. Then $K, K'$ are $(2|B'|+|H|)$-weakly stably equivalent with bases given by $B$ and $f(B)$ respectively. \label{injeq}
\end{theorem}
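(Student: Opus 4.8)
The plan is to reduce the statement to Theorem~\ref{isoeq}, which already disposes of two ribbon knots sharing a common base set on which the identity induces the quandle isomorphism. Thus the entire task is to replace the presentation $(B',H')$ of $K'$, through weak stable equivalence, by a presentation whose set of bases is \emph{exactly} $f(B)$. Once this is achieved, $K$ carries $(B,H)$, $K'$ carries a presentation with bases $f(B)$, and the bijection $b_i\mapsto f(b_i)$ realizes the given isomorphism $\phi\colon Q(S^{n+2},K)\to Q(S^{n+2},K')$; relabelling each $f(b_i)$ as $b_i$ puts us literally in the hypotheses of Theorem~\ref{isoeq} (the ``same genus'' requirement is inherited from the hypothesis here), and that theorem supplies the final $|H|$ trivial handles.

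The conversion of $(B',H')$ proceeds in two stages. First I would fabricate, for each $b_i\in B$, a base of $K'$ whose associated quandle generator is the prescribed element $f(b_i)$. Represent $f(b_i)$ by a path $\gamma_i$ recording its signed passages through the bases of $B'$ and its terminal base. Using the base-addition move of stable equivalence, I would split off a new base $c_i$ joined by a handle $k_i$ to that terminal base, with $k_i$ initially meeting no bases, so that $c_i$ starts life equal to the terminal generator; I would then drag the free half of $k_i$ along the homotopy disk for $\gamma_i$ exactly as in Lemma~\ref{qhs}, so that afterwards the generator carried by $c_i$ is the class of $\gamma_i$, namely $f(b_i)$. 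Both of these are moves of stable equivalence and change neither $K'$ nor its quandle, so this stage consumes no trivial handles. Second, since $\phi$ is onto and $B$ generates $Q(S^{n+2},K)$, the family $f(B)$ already generates $Q(S^{n+2},K')$; hence for each original base $b'_j\in B'$ the quandle is generated by the remaining bases, and Lemma~\ref{reduce} merges $b'_j$ into a base of $f(B)$ at the cost of one trivial handle. Iterating over $B'$ removes all of the original bases and leaves a presentation of $K'$ with base set exactly $f(B)$, whereupon Theorem~\ref{isoeq} applies. The deletion stage thus costs $|B'|$ trivial handles and the final stage $|H|$, comfortably within the stated bound $2|B'|+|H|$ (any surplus only weakens the conclusion, since $k$-weak stable equivalence implies $k'$-weak for $k'\ge k$).

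The main obstacle is the first stage: producing, by stable-equivalence moves alone, a base whose generator is an \emph{arbitrary} prescribed element $f(b_i)$ that may thread through many bases of $B'$. The delicacy is precisely the one flagged in the remark after Lemma~\ref{qhs}: the handle-slide argument applies only to a handle that is \emph{not yet} part of the presentation of $K'$, so the new base must genuinely be split off first and only its free end then slid; one must verify that the homotopy disk for $\gamma_i$ can be followed without forcing the sliding handle through the interior of a base it is not meant to meet, which is where the hypothesis $n\geq 2$ (embeddedness of the disk up to isolated self-intersections) is used. A second, purely combinatorial, difficulty is the handle accounting: one must check that the merges supplied by Lemma~\ref{reduce} can be organized so that every deletion can be charged against a distinct base of $B'$, keeping the total number of trivial handles plus the $|H|$ from Theorem~\ref{isoeq} within $2|B'|+|H|$.
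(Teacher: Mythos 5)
Your proposal follows essentially the same route as the paper: create, for each $b_i\in B$, a new base of $K'$ split off by a base-addition move and repositioned so that its standard generator is $f(b_i)$, then use Lemma~\ref{reduce} to merge away the original bases of $B'$, and finish with Theorem~\ref{isoeq}. The only cosmetic difference is that you realize the generator $f(b_i)$ by sliding the free end of the new handle along a homotopy disk as in Lemma~\ref{qhs}, whereas the paper drags the new base itself back along the path representing $f(b_i)$; the geometric content and the handle accounting are the same.
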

\bpr For each $b_i\in B$, we obtain a quandle element $b'_i=f(b_i)\in Q(S^{n+2}, K')$. The $b_i'$ are paths from the basepoint to some base $B'_i \in B'$. Create a trivial base $B''_i$ connected by a handle to $B'_i$ by a handle that does not pass through any bases. Then we may slide $b_i'$ so that it terminates on $B''_i$. Clearly $b_i'$ does not pass through $B''_i$. Therefore, we can pull $B''_i$ back along $b_i'$, contracting the quandle element, until we have so isotoped $B''_i$ such that $b_i'$ is now a path from the basepoint to $B''_i$ which does not pass through any bases.

Now the collection of $B''_i$s generates the quandle $Q(S^{n+2}, K')$. By Lemma \ref{reduce}, $(B', H')$ is $|B'|$-weakly stably equivalent to a knot whose bases consist of exactly the $B''_i$s. Add an equal number of handles to $H$. Then by Theorem \ref{isoeq} the result follows. \epr

\begin{theorem}
Let $K, K'$ be ribbon $n$-knots (of the same genus) with ribbon presentations $(B, H), (B', H')$, respectively. Suppose the knot quandles $Q(S^{n+2}, K)$ and $Q(S^{n+2}, K')$ are isomorphic. Then $K$ and $K'$ are $(2|B|+|H|)$-weakly stably equivalent.
\end{theorem}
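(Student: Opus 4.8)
The plan is to reduce the statement to Theorem \ref{injeq}, whose hypothesis already delivers a weak stable equivalence once one is handed a function $f\colon B\to Q(S^{n+2},K')$ that extends to a quandle isomorphism. The only apparent gap is that here we are given merely an \emph{abstract} isomorphism $\phi\colon Q(S^{n+2},K)\to Q(S^{n+2},K')$, with no a priori compatibility with the chosen bases. The key observation is that this gap is illusory: by the presentation of the knot quandle attached to a ribbon presentation, each base $b\in B$ is itself one of the generators of $Q(S^{n+2},K)$, so the restriction $f:=\phi|_B\colon B\to Q(S^{n+2},K')$ is a perfectly good function into the target quandle. Thus the first thing I would do is exhibit $f$ and check that it satisfies the hypothesis of Theorem \ref{injeq}.

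The verification that $f$ extends to $\phi$ is immediate: since the relations of $Q(S^{n+2},K)$ are exactly the handle relations $b_e=b_s^{\overline{b_1}\cdots\overline{b_n}}$, the quandle is generated by $B$, so any quandle homomorphism out of it is determined by its values on $B$; in particular $\phi$ is the unique extension of $f=\phi|_B$. Because $\phi$ is an isomorphism and $B$ generates, $f(B)=\phi(B)$ is a generating set of $Q(S^{n+2},K')$ of cardinality $|B|$, which is precisely what makes the subsequent reduction run. At this point Theorem \ref{injeq} applies and already yields weak stable equivalence; the remaining task is purely to pin down the stabilization constant.

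For the count I would follow the mechanism used in Theorem \ref{injeq}: realize the $|B|$ quandle elements $f(b_i)$ as trivial bases of $K'$ by the base-adding move (free under stable equivalence) and slide them, as in Lemma \ref{qhs}, along quandle-homotopy disks until each $f(b_i)$ is represented by a path avoiding all bases. Since these $|B|$ bases generate $Q(S^{n+2},K')$, Lemma \ref{reduce} (equivalently Corollary \ref{redq}) merges away the redundant original bases, leaving both knots presented on matched base sets in bijection via $f$, with the identity inducing $\phi$; Theorem \ref{isoeq} then matches the handles at cost $|H|$. The geometric input — sliding handles along immersed disks, and general position for $n\ge 2$ — is entirely inherited from Lemmas \ref{qhs} and \ref{reduce}, so no new transversality arguments are needed.

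The main obstacle is therefore not conceptual but arithmetic: Theorem \ref{injeq} as stated produces the constant $2|B'|+|H|$, whereas the present statement asks for $2|B|+|H|$, i.e. a bound expressed entirely in the data of $K$. The delicate point is to organize the base-reduction and handle-matching so the tally comes out this way rather than mixing $|B'|$ and $|H'|$. Here I would exploit the hypothesis that $K$ and $K'$ have the same genus, which gives $|H|-|B|=|H'|-|B'|$, to convert freely between the two bookkeepings and to confirm that the stabilization count can be written as $2|B|+|H|$. I expect this accounting — keeping scrupulous track of which trivial handles are spent creating, reducing, and matching bases versus matching handles — to be the only genuinely fiddly part of the argument.
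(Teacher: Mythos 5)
Your proposal is correct and follows essentially the same route as the paper: the paper's entire proof consists of restricting the isomorphism to the base generators $B$ to obtain the map $f\colon B\to Q(S^{n+2},K')$ and then invoking Theorem \ref{injeq}. The arithmetic discrepancy you flag between the constant $2|B'|+|H|$ of Theorem \ref{injeq} and the stated $2|B|+|H|$ is real, but the paper simply does not address it, so your extra care there goes beyond (rather than diverges from) the paper's argument.
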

\bpr Let $f:Q(S^{n+2}, K)\rightarrow Q(S^{n+2}, K')$ be the isomorphism. Then $f$ defines a map from $B$ to $Q(S^{n+2}, K')$ defined by sending $b\mapsto f(b)$ (where we send the base to the quandle element which its standard generator is sent to by $f$. Now apply Theorem \ref{injeq}. \epr

\begin{corollary}
Let $K, K'$ be as in the previous theorem, but with possibly different genus. Then they are weakly stably equivalent.
\end{corollary}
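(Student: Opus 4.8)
The plan is to reduce to the preceding theorem by using trivial handles to equalize the genus, exploiting the fact that a trivial handle contributes nothing to the knot quandle. First I would record the genera $g=|H|-|B|+1$ and $g'=|H'|-|B'|+1$ of $K$ and $K'$; since a ribbon $n$-knot with $n\geq 2$ is homeomorphic to $S^n$ with $|H|-|B|+1$ one-handles attached, these numbers are genuine knot invariants, independent of the chosen presentation. Assuming without loss of generality that $g\leq g'$, I would adjoin $g'-g$ trivial handles to $(B,H)$, each having both ends on a single base and passing through no bases, obtaining a presentation $(B,\tilde H)$ for a ribbon $n$-knot $\tilde K$ (geometrically, $\tilde K=K\,\#\,T_{g'-g}$). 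Each such handle raises $|H|$ by one while leaving $|B|$ fixed, so $\tilde K$ has genus $g+(g'-g)=g'$, matching $K'$.

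The key point is that $Q(S^{n+2},\tilde K)\cong Q(S^{n+2},K)$. In the quandle presentation attached to a ribbon presentation, a handle running from $b_s$ to $b_e$ through the word $b_1\cdots b_n$ contributes the relation $b_e=b_s^{\overline{b_1}\cdots\overline{b_n}}$; for a trivial handle we have $b_s=b_e$ and an empty word, so the relation is simply $b=b$, which is vacuous. Hence the quandle presentation of $\tilde K$ is that of $K$ with only trivial relations adjoined, giving $Q(S^{n+2},\tilde K)\cong Q(S^{n+2},K)\cong Q(S^{n+2},K')$. Thus $\tilde K$ and $K'$ are ribbon $n$-knots of the same genus $g'$ with isomorphic knot quandles, so the preceding theorem applies directly and shows that $\tilde K$ and $K'$ are weakly stably equivalent.

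It remains only to relate $K$ to $\tilde K$. Since $\tilde K$ is obtained from $K$ purely by adjoining trivial handles, $K$ and $\tilde K$ are weakly stably equivalent essentially by definition: attaching those same $g'-g$ trivial handles to $(B,H)$ and none to $(B,\tilde H)$ renders the two presentations identical, hence stably equivalent. Composing the two weak stable equivalences $K\to\tilde K\to K'$ then yields the corollary.

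The hard part will not be any of the geometry, which is already carried by the preceding theorem, but rather verifying the transitivity of weak stable equivalence needed to concatenate the two steps: I must check that one may adjoin further trivial handles to both sides of each equivalence simultaneously so that the underlying stable equivalences can be composed. This is routine bookkeeping, since stable equivalence is already an equivalence relation and additional trivial handles can always be introduced; the genuinely new content of the statement is confined to the genus-equalizing step and to the observation that a trivial handle contributes only the vacuous relation to the quandle.
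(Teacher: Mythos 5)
Your argument is correct and is precisely the route the paper leaves implicit (the corollary is stated without proof): equalize the genera by adjoining trivial handles, observe that each trivial handle adds only the vacuous relation $b=b$ to the quandle presentation, and then invoke the preceding theorem. The bookkeeping point you flag is genuinely the only thing to check, and it works out because the definition of weak stable equivalence permits attaching trivial handles to both presentations before testing stable equivalence.
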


\begin{corollary}
Let $(B, H), (B', H')$ be two ribbon presentations for a single $n$-knot $K$, $n\geq 2$. Then they are $(2|B|+||H|)$-weakly stably equivalent.
\end{corollary}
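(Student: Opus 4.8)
The plan is to treat this statement as nothing more than the special case $K=K'$ of the immediately preceding theorem. First I would observe that when $(B,H)$ and $(B',H')$ are both ribbon presentations of the \emph{same} knot $K$, the two knot quandles appearing as hypotheses are not merely abstractly isomorphic but are literally the same quandle $Q(S^{n+2},K)$. In particular the identity map is a quandle isomorphism between them, so the quandle hypothesis of the preceding theorem is satisfied for free, with no isomorphism to construct.

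The one hypothesis that requires a moment's verification is the requirement that the two presentations have the \emph{same genus}. Here I would appeal to the Euler-characteristic computation recorded earlier in the paper: a ribbon $n$-knot, $n\geq 2$, with presentation $(B,H)$ has the homeomorphism type of an $n$-sphere with $|H|-|B|+1$ one-handles attached. Since the homeomorphism type of the knotted manifold is an invariant of $K$, both presentations must yield the same value of $|H|-|B|+1$; that is, $|H|-|B|+1=|H'|-|B'|+1$, so the two presentations necessarily have equal genus. No choice or isotopy is needed to arrange this: it is forced by the fact that $(B,H)$ and $(B',H')$ describe one and the same knot.

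With both hypotheses in place, I would simply invoke the preceding theorem with $K'=K$, taking the first presentation to be $(B,H)$. That theorem then yields a stable equivalence after attaching $2|B|+|H|$ trivial handles, which is exactly the assertion that $(B,H)$ and $(B',H')$ are $(2|B|+|H|)$-weakly stably equivalent (the doubled bar in the displayed count being a typographical slip for $|H|$).

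I do not expect any genuine obstacle here: the entire mathematical content resides in the preceding theorem, and this corollary merely instantiates it at $K=K'$. The only point at which a careless reader might hesitate is the genus condition, which is why I would make the Euler-characteristic argument explicit rather than leave it implicit, so that the appeal to the preceding theorem is plainly legitimate.
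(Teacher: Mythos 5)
Your proposal is correct and matches the paper's intent exactly: the corollary is stated without proof precisely because it is the specialization $K'=K$ of the preceding theorem, with the identity serving as the quandle isomorphism. Your explicit check of the genus condition via the Euler-characteristic formula $|H|-|B|+1$ is a sensible bit of added care that the paper leaves implicit.
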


This is related to the question posed in \cite{Naka}, where it is asked whether two ribbon presentations for the same $n$-knot with $n\geq 2$ are necessarily stably equivalent. In \cite{NakaNaka} an infinite family of examples are constructed showing that two ribbon presentations for the same $1$-knot do not need to be stably equivalent. In fact examples are constructed of $1$-knots with $n$ stably inequivalent ribbon presentations for any $n\in\mathbb{N}$.

%

\section{Conclusion and Further Problems}
\label{s-conc}
%

We have explored a method for generalizing virtual links and tangles to higher dimensions, using the geometric definition for virtual links given in \cite{StableEq}. From this we have established that such higher-dimensional virtual knots have many invariants which are extensions of invariants for classical links. However, it is not true in higher dimensions that classical links embed into virtual links: we have given examples of virtually knotted tori which are classically distinct but virtually equivalent.

There are a number of questions which can be asked about these virtual $n$-links and tangles. We list here a few which may be of potential interest.

\begin{question}
Generalize Langford's and Baez's, \cite{Lang, BL}, categorical description of $2$-tangles to virtual $n$-tangles.
\end{question}

Langford and Baez, \cite{Lang, BL}, have given a description of classical $2$-tangles as a certain kind of $2$-category. It is reasonable to expect that a similar categorical description would apply to virtual $2$-tangles. In addition to being an interesting problem in itself, we may expect that such a categorical description might be useful in studying Khovanov homology of virtual knots. More generally, it should be possible to describe $n$-links and virtual or welded $n$-links as some type of $n$-category.

For $1$-links, the study of virtual links through tangles can be used to define certain quantum invariants of virtual links. These may relate to the case of higher $n$ as well.

\begin{question}
Find an example of a pair of knotted $n$-spheres in $D^{n+1}\times [0,1]$ which are not isotopic but which are virtually equivalent, or prove that no such examples exist.
\end{question}

The examples which we have given all involved knotted tori in $S^4$. We conjecture, however, that there should be examples involving knotted $2$-spheres as well. As mentioned in our discussion of the examples of tori, the knotted sphere examples given by Gordon in \cite{Gor} may provide a good starting place for investigating this conjecture. However, since many invariants of classical knots extend to invariants of virtual knots in all dimensions, finding such examples is nontrivial.

\begin{question}
Find examples of non-trivial virtual $n$-knots for which all homotopy groups $\pi_i(h(F)-K)$, $i=1... n$, are isomorphic to those of the $h(D^{n+1})-U_n$ (where $U_n$ denotes the trivial knot).
\end{question}

\begin{question}
Extend the definition of biquandle invariants to links of all dimensions.
\end{question}

Given the usefulness of biquandles for virtual $1$- and $2$-links (see e.g. Corollary \ref{2Kish}), it is reasonable to think that such an invariant may have similar applications for virtual $n$-links in general. We conjecture that these invariants might be able to be extended inductively to higher dimensions by using a technique like Fox-Milnor movies. Alternately, it may be possible to check the Roseman moves directly, although since the number of Roseman moves increases rapidly with $n$, it may not be possible to use this approach to give a proof for general $n$.

\begin{question}
Give a formal definition of welded links that applies in any dimension.
\end{question}

\end{document}